\numberwithin{equation}{section}
\let\old@tocline\@tocline
\let\section@tocline\@tocline
\newcommand{\subsection@dotsep}{4.5}
\newcommand{\subsubsection@dotsep}{4.5}
     \leaders\hbox{$\m@th
        \mkern \subsection@dotsep mu\hbox{.}\mkern \subsection@dotsep mu$}\hfill
\let\subsection@tocline\@tocline
\let\@tocline\old@tocline
     \leaders\hbox{$\m@th
        \mkern \subsubsection@dotsep mu\hbox{.}\mkern \subsubsection@dotsep mu$}\hfill
\let\subsubsection@tocline\@tocline
\let\@tocline\old@tocline
\let\old@l@subsection\l@subsection
\let\old@l@subsubsection\l@subsubsection
\def\@tocwriteb#1#2#3{%
  \begingroup
    \@xp\def\csname #2@tocline\endcsname##1##2##3##4##5##6{%
      \ifnum##1>\c@tocdepth
      \else \sbox\z@{##5\let\indentlabel\@tochangmeasure##6}\fi}%
    \csname l@#2\endcsname{#1{\csname#2name\endcsname}{\@secnumber}{}}%
  \endgroup
  \addcontentsline{toc}{#2}%
    {\protect#1{\csname#2name\endcsname}{\@secnumber}{#3}}}%
\newlength{\@tocsectionindent}
\newlength{\@tocsubsectionindent}
\newlength{\@tocsubsubsectionindent}
\newlength{\@tocsectionnumwidth}
\newlength{\@tocsubsectionnumwidth}
\newlength{\@tocsubsubsectionnumwidth}
\newcommand{\settocsectionnumwidth}[1]{\setlength{\@tocsectionnumwidth}{#1}}
\newcommand{\settocsubsectionnumwidth}[1]{\setlength{\@tocsubsectionnumwidth}{#1}}
\newcommand{\settocsubsubsectionnumwidth}[1]{\setlength{\@tocsubsubsectionnumwidth}{#1}}
\newcommand{\settocsectionindent}[1]{\setlength{\@tocsectionindent}{#1}}
\newcommand{\settocsubsectionindent}[1]{\setlength{\@tocsubsectionindent}{#1}}
\newcommand{\settocsubsubsectionindent}[1]{\setlength{\@tocsubsubsectionindent}{#1}}
\renewcommand{\l@section}{\section@tocline{1}{\@tocsectionvskip}{\@tocsectionindent}{}{\@tocsectionformat}}%
\renewcommand{\l@subsection}{\subsection@tocline{2}{\@tocsubsectionvskip}{\@tocsubsectionindent}{}{\@tocsubsectionformat}}%
\renewcommand{\l@subsubsection}{\subsubsection@tocline{3}{\@tocsubsubsectionvskip}{\@tocsubsubsectionindent}{}{\@tocsubsubsectionformat}}%
\newcommand{\@tocsectionformat}{}
\newcommand{\@tocsubsectionformat}{}
\newcommand{\@tocsubsubsectionformat}{}
\def\csname toc@1format\endcsname{\@tocsectionformat}
\def\csname toc@2format\endcsname{\@tocsubsectionformat}
\def\csname toc@3format\endcsname{\@tocsubsubsectionformat}
\newcommand{\settocsectionformat}[1]{\renewcommand{\@tocsectionformat}{#1}}
\newcommand{\settocsubsectionformat}[1]{\renewcommand{\@tocsubsectionformat}{#1}}
\newcommand{\settocsubsubsectionformat}[1]{\renewcommand{\@tocsubsubsectionformat}{#1}}
\newlength{\@tocsectionvskip}
\newcommand{\settocsectionvskip}[1]{\setlength{\@tocsectionvskip}{#1}}
\newlength{\@tocsubsectionvskip}
\newcommand{\settocsubsectionvskip}[1]{\setlength{\@tocsubsectionvskip}{#1}}
\newlength{\@tocsubsubsectionvskip}
\newcommand{\settocsubsubsectionvskip}[1]{\setlength{\@tocsubsubsectionvskip}{#1}}
\patchcmd{\tocsection}{\indentlabel}{\makebox[\@tocsectionnumwidth][l]}{}{}
\patchcmd{\tocsubsection}{\indentlabel}{\makebox[\@tocsubsectionnumwidth][l]}{}{}
\patchcmd{\tocsubsubsection}{\indentlabel}{\makebox[\@tocsubsubsectionnumwidth][l]}{}{}
\newcommand{\@sectypepnumformat}{}
\renewcommand{\contentsline}[1]{%
  \expandafter\let\expandafter\@sectypepnumformat\csname @toc#1pnumformat\endcsname%
  \csname l@#1\endcsname}
\newcommand{\@tocsectionpnumformat}{}
\newcommand{\@tocsubsectionpnumformat}{}
\newcommand{\@tocsubsubsectionpnumformat}{}
\newcommand{\setsectionpnumformat}[1]{\renewcommand{\@tocsectionpnumformat}{#1}}
\newcommand{\setsubsectionpnumformat}[1]{\renewcommand{\@tocsubsectionpnumformat}{#1}}
\newcommand{\setsubsubsectionpnumformat}[1]{\renewcommand{\@tocsubsubsectionpnumformat}{#1}}
\renewcommand{\@tocpagenum}[1]{%
  \hfill {\mdseries\@sectypepnumformat #1}}
\let\oldappendix\appendix
\renewcommand{\appendix}{%
  \leavevmode\oldappendix%
  \addtocontents{toc}{%
    \protect\settowidth{\protect\@tocsectionnumwidth}{\protect\@tocsectionformat\sectionname\space}%
    \protect\addtolength{\protect\@tocsectionnumwidth}{2em}}%
}
\let\oldtableofcontents\tableofcontents
\renewcommand{\tableofcontents}{%
  \vspace*{-\linespacing}
  \oldtableofcontents}
\newcommand{\dD}{\overline{D}}
\newcommand{\Fr}{F^{\otimes r}}
\newcommand{\ar}{{a_{[r]}}}
\newcommand{\zr}{{z_{[r]}}}
\newcommand{\M}{\overline{M}}
\newcommand{\comp}{{\mathrm{glue}}}
\newcommand{\Hom}{{\mathrm{Hom}}}
\newcommand{\an}{{\mathrm{an}}}
\newcommand{\bH}{\mathbb H}
\newcommand{\modu}{{\text{-mod}}}
\newcommand{\Vmodu}{{\underline{V{\text{-mod}}}}}
\newcommand{\Vmodf}{{\underline{V\text{-mod}}_{f.g.}}}
\newcommand{\Vmodft}{{\underline{V\text{-mod}}_{f.g.}^2}}
\newcommand{\Vmodfo}{{\underline{V\text{-mod}}_{f.g.}^\op}}
\newcommand{\CPaB}{{\underline{\text{PaB}}}}
\newcommand{\PaPB}{{\underline{\text{PaPB}}}}
\newcommand{\Z}{\mathbb{Z}}
\newcommand{\R}{\mathbb{R}}
\newcommand{\C}{\mathbb{C}}
\newcommand{\Q}{\mathbb{Q}}
\newcommand{\HH}{\overline{\mathbb{H}}}
\newcommand{\uH}{\mathbb{H}}
\newcommand{\Y}{\mathcal{Y}}
\newcommand{\mO}{\mathrm{O}}
\newcommand{\Xr}{{X_r(\C)}}
\newcommand{\Or}{\mathrm{O}_{\Xr}}
\newcommand{\Om}{\Omega}
\newcommand{\Ort}{\mathrm{O}_{\Xr/\C}}
\newcommand{\Mr}{{M_{[0;r]}}}
\newcommand{\mr}{{m_{[0;r]}}}
\newcommand{\Dr}{{\mathrm{D}_{\Xr}}}
\newcommand{\Leaf}{\text{Leaf}}
\newcommand{\alg}{\text{alg}}
\newcommand{\conv}{\text{conv}}
\newcommand{\dz}{{\frac{d}{dz}}}
\newcommand{\va}{\bm{1}}
\newcommand{\vac}{\bm{1}}
\newcommand{\id}{{\mathrm{id}}}
\newcommand{\z}{{\bar{z}}}
\newcommand{\h}{{\bar{h}}}
\newcommand{\p}{{\bar{p}}}
\newcommand{\uz}{\underline{z}}
\newcommand{\uzr}{{\underline{z}}_{[r]}}
\newcommand{\pa}{{\partial}}
\newcommand{\Vect}{{\underline{\text{Vect}}_\C}}
\newcommand{\Ms}{{M_{[0;s]}}}
\newcommand{\al}{\alpha}
\newcommand{\ep}{\epsilon}
\newcommand{\be}{\beta}
\newcommand{\ga}{\gamma}
\newcommand{\ze}{\zeta}
\newcommand{\om}{\omega}
\newcommand{\la}{\lambda}
\newcommand{\omb}{{\bar{\omega}}}
\newcommand{\si}{\sigma}
\newcommand{\Log}{\mathrm{Log}}
\newcommand{\Arg}{\mathrm{Arg}}
\newcommand{\lat}{{\mathrm{lat}}}
\newcommand{\g}{{\mathfrak{g}}}
\newcommand{\fg}{{\mathfrak{g}}}
\newcommand{\CB}{{\mathcal{CB}}}
\newcommand{\Ld}{{\overline{L}}}
\newcommand{\D}{{\mathbb{D}}}
\newcommand{\GCor}{{\mathrm{GCor}}}
\newcommand{\Aut}{\mathrm{Aut}\,}
\newcommand{\tw}{{{I\hspace{-.1em}I}_{n,n}}}
\newcommand{\tww}{{{I\hspace{-.1em}I}_{1,1}}}
\newcommand{\End}{\mathrm{End}}
\newcommand{\Func}{\mathrm{Func}}
\newcommand{\Endp}{{\mathcal{PE}\mathrm{nd}}}
\newcommand{\cat}{{\underline{\mathrm{Cat}}_\mathbb{C}}}
\newcommand{\bdy}{\mathrm{bdy}}
\newcommand{\bulk}{\mathrm{bulk}}
\newcommand{\Vir}{{\mathrm{Vir}}}
\newcommand{\Tr}{{\mathcal{T}}}
\newcommand{\cut}{\mathrm{cut}}
\newcommand{\cop}{{\mathrm{cop}}}
\newcommand{\op}{{\mathrm{op}}}
\newcommand{\rev}{{\mathrm{rev}}}
\newtheorem{thm}{Theorem}[section]
\newtheorem{dfn}[thm]{Definition}
\newtheorem{lem}[thm]{Lemma}
\newtheorem{prop}[thm]{Proposition}
\newtheorem{cor}[thm]{Corollary}
\newtheorem{rem}[thm]{Remark}
\newtheorem{mainthm}{Main Theorem}
\begin{document}

\begin{center}
{{\LARGE \bf 
Convergence and operadic compatibility of bulk and boundary OPEs in two-dimensional conformal field theory}
} \par \bigskip

\renewcommand*{\thefootnote}{\fnsymbol{footnote}}
{\normalsize
Yuto Moriwaki \footnote{email: \texttt{moriwaki.yuto (at) gmail.com}}
}
\par \bigskip
{\footnotesize Interdisciplinary Theoretical and Mathematical Science Program (iTHEMS)\\
Wako, Saitama 351-0198, Japan}
\par \bigskip
\end{center}

\noindent

\begin{center}
\textbf{\large Abstract}
\end{center}

We prove convergence and compatibility of iterated bulk and boundary operator
product expansions (OPEs) in two-dimensional conformal field theory with locally
$C_1$-cofinite chiral symmetry.
For each tree, we give an explicit domain of convergence for the corresponding
iterated OPE. These local expansions glue to
single-valued real analytic functions on the configuration spaces,
which are the correlation functions of the theory.
The proof uses an action of
the parenthesized permutation-braid operad on $C_1$-cofinite module categories
of a vertex operator algebra. 
This operad models the fundamental groupoid of the two-dimensional Swiss-cheese operad, 
and under this action the operadic generators correspond to the genus-zero
bootstrap equations of boundary CFT.

\tableofcontents

\begin{center}
\textbf{\large Introduction}
\end{center}

Conformal field theory is described by an algebraic structure known as the \emph{operator product expansion} (OPE) \cite{Poly1,BPZ,FMS}.  In two dimensions, the OPE is a family of products depending on a point of the punctured complex plane,
\begin{align}
  a \cdot_z b
  = \sum_{\substack{r,s\in\R\\r-s\in\Z}} a(r,s)b\, z^{-r-1}\bar z^{-s-1},
  \qquad z \in \mathbb C^\times.
  \label{intro_OPE}
\end{align}
The composition of these products is generally encoded by binary trees.  For instance,
\begin{align}
  (a_1 \cdot_{z_{12}} a_2) \cdot_{z_{23}} a_3,
  \qquad
  a_1 \cdot_{z_{13}}(a_2 \cdot_{z_{23}} a_3)
  \label{intro_OPE_ex}
\end{align}
correspond to the binary trees
\begin{tikzpicture}[baseline=-.55ex,scale=.45]
  \coordinate (r)  at (0,2);
  \coordinate (v)  at (-.8,1);
  \coordinate (l1) at (-1.4,0);
  \coordinate (l2) at (-.2,0);
  \coordinate (l3) at (.8,0);

  \draw (r)--(v);
  \draw (r)--(l3);
  \draw (v)--(l1);
  \draw (v)--(l2);

  \node[below=1pt] at (l1) {$1$};
  \node[below=1pt] at (l2) {$2$};
  \node[below=1pt] at (l3) {$3$};
\end{tikzpicture}
\quad and \quad
\begin{tikzpicture}[baseline=-.55ex,scale=.45]
  \coordinate (r)  at (0,2);
  \coordinate (v)  at (.8,1);
  \coordinate (l1) at (-.8,0);
  \coordinate (l2) at (.2,0);
  \coordinate (l3) at (1.4,0);

  \draw (r)--(l1);
  \draw (r)--(v);
  \draw (v)--(l2);
  \draw (v)--(l3);

  \node[below=1pt] at (l1) {$1$};
  \node[below=1pt] at (l2) {$2$};
  \node[below=1pt] at (l3) {$3$};
\end{tikzpicture}.

These iterated products are formal power series, and the fundamental questions are to determine their domains of convergence in the configuration space
\begin{align*}
X_r(\C) = \{(z_1,\dots,z_r) \in \C^r \mid z_i\neq z_j \}
\end{align*}
and to understand how the expansions associated with different orders and
parenthesizations are compatible with one another.

For a chiral conformal field theory in which the OPE \eqref{intro_OPE} is holomorphic in $z$, the theory can be described by a \emph{vertex operator algebra} (VOA) \cite{B1,G,FLM}.  With the convention $z_{ij}=z_i-z_j$, the two expressions in \eqref{intro_OPE_ex} are known to converge absolutely on
\[
{U}_{(12)3}^c
  = \{(z_1,z_2,z_3)\in X_3(\C) \mid |z_1-z_2|<|z_2-z_3|\},
\]
and
\[
{U}_{1(23)}^c
  = \{(z_1,z_2,z_3)\in X_3(\C) \mid |z_2-z_3|<|z_1-z_3|\},
\]
respectively.
On the intersection ${U}_{(12)3}^c\cap {U}_{1(23)}^c$, the two holomorphic functions agree, and
they analytically continue to a single-valued holomorphic function on
\(X_3(\C)\).
In fact, just as ordinary commutative algebras are characterized by associativity and commutativity of their product, vertex operator algebras are characterized by the corresponding analytic identities for OPEs \cite{LL,FB}.
When the OPE is real analytic rather than holomorphic, imposing the same associativity and commutativity identities in the sense of analytic continuation leads to the notion of a \emph{full vertex operator algebra} (full VOA) \cite{M1}.

This leads to the problem of determining the domains of convergence for iterated full
VOA products and proving their compatibility for arbitrary orders and parenthesizations.
More precisely, let $\Tr_r$ denote the set of all binary trees with leaves labeled by $\{1,\dots,r\}$, and let $C^\om(X_r(\C),\C)$ denote the space of complex-valued real analytic functions on $X_r(\C)$.
To each $A\in\Tr_r$, we associate an open subset $U_A^c\subset X_r(\C)$.

The first main result is stated as follows  (see Theorem \ref{thm_bulk}).  Let $F$ be a full vertex
operator algebra which is locally $C_1$-cofinite as a module of its canonical holomorphic
and anti-holomorphic sub-VOAs $\ker L(-1)\otimes\ker \Ld(-1)$. 
Then for every binary tree $A\in\Tr_r$ the corresponding iterated OPE
converges absolutely and locally uniformly on the domain ${U}_A^c\subset X_r(\C)$.
Moreover, there is a sequence of single-valued real analytic functions
\begin{align}
\left\{C_r:F^\vee\otimes F^{\otimes r}\longrightarrow C^\om(X_r(\C),\C)\right\}_{r\geq 0}
 \label{eq_intro_bulk_ope0}
\end{align}
such that, for all $A\in \Tr_r$, $u\in F^\vee = \bigoplus_{h,\h\in\R}F_{h,\h}^*$ and $a_1,\ldots,a_r\in F$,
\begin{align}
  C_r(u;a_1,\ldots,a_r)|_{{U}_A^c}
  =
 \langle u,\, A\text{-shaped OPEs of }a_1,\ldots,a_r\rangle.
 \label{eq_intro_bulk_ope1}
\end{align}
Equivalently, the local analytic functions defined by the tree-wise OPEs glue to a single-valued real analytic function on $X_r(\C)$.
In particular, since a vertex operator algebra is $C_1$-cofinite as a module over itself, this also
yields explicit convergence domains for tree-wise iterated OPEs in the chiral case.


%
%

The proof uses the action of the \emph{parenthesized braid
operad} $\CPaB$ on $\Vmodf$, the category of $C_1$-cofinite modules of a vertex
operator algebra $V$ \cite{M6}.
The operad \(\CPaB\) is an operad object in
the category of categories and is a combinatorial model for the fundamental
groupoid of the little 2-disks operad \cite{Bar,Ta}.
For each $r$, the objects of $\CPaB(r)$ are binary trees
with $r$ labeled leaves.  If $A,B\in \Tr_r$, then the morphisms from $A$ to $B$
are given by braids whose underlying permutation is compatible with the
permutations determined by $A$ and $B$.
By \cite{M6}, compositions of intertwining operators 
among $V$-modules of shape $A\in \Tr_r$ 
converge on \(U_A^c \subset X_r(\C)\) 
 and, after choosing branches over this domain,
define sections of the conformal block. 
Furthermore, analytic continuations of these conformal blocks
along paths in $X_r(\C)$ give rise to the $\CPaB$-action on $\Vmodf$.
Then, the associativity and commutativity axioms of
a full VOA say that the corresponding monodromies in $X_3(\C)$ and $X_2(\C)$ are trivial. 
Since the
associator and the braiding generate $\CPaB$ as an operad, all tree-wise OPE
expansions have the same analytic continuation,
which gives the real analytic functions in \eqref{eq_intro_bulk_ope0}; in physics, they are called the \emph{
\(r\)-point correlation functions}.
This result may therefore be regarded as the conformal-field-theoretic
analogue of the elementary fact that an iterated product in a commutative
associative algebra is independent of the order and parentheses.

In this paper we extend the above result to two-dimensional conformal field
theory with boundary.  A boundary CFT has two kinds of states, bulk states and
boundary states.  Accordingly, its OPE algebra has three basic operations:

\[
  \begin{array}{rcll}
  \cdot_z^\bulk &:& F_\bulk \otimes F_\bulk \longrightarrow F_\bulk((z,\bar z,|z|^{\mathbb R})),
      &\text{bulk OPE},\\[2mm]
 \cdot_x^\bdy &:& F_\bdy \otimes F_\bdy \longrightarrow F_\bdy((x^{\mathbb R})),
      &\text{boundary OPE},\\[2mm]
 \tau_y &:& F_\bulk \longrightarrow F_\bdy((y^{\mathbb R})),
      &\text{bulk-boundary OPE}.
  \end{array}
\]

This structure is the conformal-field-theoretic analogue of an algebra over the
homology of the \emph{Swiss-cheese operad}. By Voronov's description, such an algebra
is a triple $  (A_{\bulk},A_{\bdy},\iota)$,
where $A_{\bulk}$ is a Gerstenhaber algebra, $A_{\bdy}$ is an associative algebra, and
$\iota:A_{\bulk}\longrightarrow A_{\bdy}$ is an algebra homomorphism whose image is contained in the center of
$A_{\bdy}$ \cite{Voronov} (see also \cite{KS1,KS2}).
The boundary CFT data $(F_{\bulk},F_{\bdy},\tau_y)$ should be viewed as its
OPE-theoretic counterpart.  The ordinary products are replaced by the bulk OPE,
the boundary OPE, and the bulk-boundary OPE.  Their parameters reflect the
geometry of the upper half-plane: bulk insertions are placed at points
$z\in\bH$, boundary insertions at points $x\in\R=\partial\bH$, and the
bulk-boundary OPE depends on $y=\mathrm{Im}\,z$.

Although $x$ is real for an actual boundary-boundary OPE, we regard
$\cdot_x^{\bdy}$ as a formal operation in $x$; in iterated OPEs one may
substitute complex differences such as $z_i-x_j$.  With this convention,
iterated OPEs in boundary CFT are naturally indexed by two-colored binary trees;
in the figures below, boldface labels denote bulk leaves and ordinary labels
denote boundary leaves.
For example, an expression such as
\[
  \bigl( \tau_{y_2}(\mathbf{a}_2) \cdot^{\bdy}_{x_{24}} b_4 \bigr)
  \cdot^{\bdy}_{x_{45}}
  \bigl( \tau_{y_1}(\mathbf{a}_3 \cdot^{\bulk}_{z_{31}} \mathbf{a}_1)
  \cdot^{\bdy}_{x_{15}} b_5 \bigr).
\]
is represented by a two-colored tree of the form
\[
  \begin{tikzpicture}[baseline=-.5ex,scale=.65]
    \coordinate (r)   at (0,3);
    \coordinate (vL)  at (-1.5,2);
    \coordinate (vR)  at (1.5,2);
    \coordinate (b4)  at (-.8,1);
    \coordinate (b5)  at (2.2,1);
    \coordinate (c31) at (.8,0);
    \coordinate (a2)  at (-2.2,0);
    \coordinate (a3)  at (.2,-1);
    \coordinate (a1)  at (1.4,-1);

    \draw (r)--(vL);
    \draw (r)--(vR);

    \draw (vL)-- node[pos=.55,left=1pt,fill=white,inner sep=.5pt] {$\tau$} (a2);
    \draw (vL)--(b4);

    \draw (vR)-- node[pos=.55,left=1pt,fill=white,inner sep=.5pt] {$\tau$} (c31);
    \draw (vR)--(b5);

    \draw (c31)--(a3);
    \draw (c31)--(a1);

    \node[below=1pt] at (b4) {$4$};
    \node[below=1pt] at (b5) {$5$};

    \node[below=1pt] at (a2) {$\mathbf{2}$};
    \node[below=1pt] at (a3) {$\mathbf{3}$};
    \node[below=1pt] at (a1) {$\mathbf{1}$};
  \end{tikzpicture}.
\]

For each two-colored tree $E\in \Tr_{r,s}$, we define a domain 
$U_E^o\subset X_{r,s}(\HH)$, where
\[
  X_{r,s}(\HH)
  = \{(z_1,\ldots,z_r;x_1,\ldots,x_s)
       \in \mathbb H^r\times \mathbb R^s
       \mid \text{all insertions are distinct}\}.
\]
Under the locally $C_1$-cofinite assumptions, we prove convergence
and consistency of boundary OPEs on these domains; see Theorem~\ref{thm_bulk_boundary}.

For the bulk OPE algebra, namely a full VOA, the basic identities ensuring the
compatibility of different tree-wise OPE expansions are associativity and commutativity.  As explained above, these identities correspond to the operadic generators of the parenthesized braid operad $\CPaB$.
In the
boundary case the corresponding role is played by the \emph{parenthesized
permutation-braid operad} $\PaPB$, a two-colored operad introduced by Idrissi \cite{Id}.
Its objects in arity $(r,s)$ are the two-colored trees in $\Tr_{r,s}$,
and its morphisms are the corresponding permutation-braids, 
equivalently morphisms in the fundamental groupoid of $X_{r,s}(\HH)$ between the corresponding base configurations.
Using the embedding
\[
X_{r,s}(\HH)\hookrightarrow X_{2r+s}(\C),\quad
  (z_1,\ldots,z_r;x_1,\ldots,x_s)
  \longmapsto
  (z_1,\bar z_1,\ldots,z_r,\bar z_r,x_1,\ldots,x_s),
\]
together with the compatible doubling map on trees
$\Tr_{r,s}\rightarrow \Tr_{2r+s}$, we prove that $\PaPB$ acts on the pair of the categories
$(\Vmodf,\Vmodf\times\Vmodf)$; see Theorem~\ref{thm_SC_action}.

The \(\PaPB\)-action reduces the compatibility of all boundary OPE expansions to
the identities associated with operadic generators.  In this paper we
use generators corresponding to the following five elementary identities:
associativity and commutativity of the bulk OPE, associativity of the boundary
OPE, and the commutativity and compatibility between the bulk and
bulk-boundary OPEs.  These identities are precisely the genus-zero
\emph{bootstrap equations} of boundary CFT in the physics literature \cite{Le}.
Thus, in the present formulation, the bootstrap equations are the
 identities associated with the homotopy-theoretic generators of the
operadic structure of configuration spaces.
It follows that the tree-wise local expansions glue to single-valued real analytic 
functions
\[
C_{r,s}:F_\bdy^\vee \otimes F_{\bulk}^{\otimes r}
\otimes F_{\bdy}^{\otimes s}
\longrightarrow C^\om(X_{r,s}(\HH)),
\]
such that, for all $E \in \Tr_{r,s}$, $u\in F_\bdy^\vee$, $a_1,\ldots,a_r\in F_\bulk$ and $b_1,\dots,b_s\in F_\bdy$,
\begin{align}
C_{r,s}(u;a_1,\dots,a_r,b_1,\dots,b_s)\big|_{U_E^o}
 &=
 \langle u,\,
 \text{\(E\)-shaped OPEs of }
 a_1,\dots,a_r,b_1,\dots,b_s\rangle.
 \label{eq_intro_bulk_ope2}
\end{align}

%


Let us also indicate how our results are related to existing constructions of
full and boundary CFT.  For rational \(C_2\)-cofinite vertex operator algebras,
full and boundary CFTs have been constructed and studied by means of modular
tensor categories, in particular in the works of Fuchs, Runkel, Schweigert and
their collaborators \cite{FRS,FRS2,FFFS}. 
Huang and Kong formulated full field algebras and constructed genus-zero full
CFTs from modules and intertwining operators of vertex operator algebras
\cite{HK1}. Kong introduced open-closed field algebras and developed an algebraic formulation of boundary CFT \cite{Ko1,Ko3}. 
These works are based on
the vertex tensor category theory of Huang and Lepowsky \cite{HL}; see also \cite{HLZ6,HLZ7}.

In the present paper, the analytic input is the action of the parenthesized braid
operad on conformal blocks for \(C_1\)-cofinite modules \cite{M6}.
We use this action to describe explicit convergence domains for
arbitrary tree-wise iterated OPEs and to prove the compatibility of the
resulting local analytic functions through the operadic structure of
configuration spaces.
These domains are also relevant in
comparisons with other formulations of quantum field theory.  In the bulk case,
they are used in the verification of the Osterwalder--Schrader axioms
\cite{OS1,OS2} for unitary full vertex operator algebras in joint work with
Adamo and Tanimoto \cite{AMT}.

%
%
%

The use of local \(C_1\)-cofiniteness is motivated by examples beyond
the rational setting.  In sigma models associated with Calabi--Yau manifolds,
CFTs arise in families reflecting deformations of the underlying geometry, and
such families generally fail to remain rational \cite{AGM,Hori}. Local
\(C_1\)-cofiniteness is weaker than rationality. 
In Appendix~A, we verify it for the
current-current deformations of regular full VOAs constructed in \cite{M1}.
This class includes the deformation families arising from sigma models
associated with abelian varieties \cite{Mmirror}.

The paper is organized as follows.  In Section~1, we recall binary trees, configuration
spaces, the parenthesized braid operad, and the tree-wise convergence regions for chiral
conformal blocks.  We also review the action of $\mathrm{PaB}$ on conformal blocks associated
with locally $C_1$-cofinite $V$-modules.  In Section~2, we pass to the two-colored setting:
we recall the parenthesized permutation-braid operad $\mathrm{PaPB}$, identify it with the
combinatorics of the Swiss-cheese operad, and construct its action on boundary conformal
blocks by doubling configurations in the upper half-plane.  In Section~3, we formulate
bulk and boundary OPE algebras and prove the main consistency theorem: every
bulk or boundary tree-wise OPE expansion converges on its explicit domain and all such
expansions glue to single-valued real analytic correlation functions.

\vspace{3mm}
\begin{center}
\textbf{\large Notations}
\end{center}

\vspace{3mm}
We will use the following notations:
\begin{itemize}
\item[$X_r(\C)$:] $=\{(z_1,\dots,z_r)\in \C^r\mid z_i\neq z_j \text{ for any }i\neq j\}$
\item[$X_{r,s}(\HH)$:] $= \{(z_1,\dots,z_{r+s}) \in \C^{r+s} \mid \mathrm{Im}\, z_i>0, \mathrm{Im}\,z_j=0\text{ for }i\leq r, j>r, \text{all coordinates are distinct}\}.$
\item[$\Phi$:] $X_{r,s}(\HH) \hookrightarrow X_{2r+s}(\C),\quad (z_1,\dots,z_r,z_{r+1},\dots,z_{r+s}) \mapsto (z_1,\z_1,\dots,z_r,\z_r,z_{r+1},\dots,z_{r+s})$
\item[$\lbrack r\rbrack$:] $=\{1,2,\dots,r\}$
\item[$\Tr_r$:] the set of all binary trees with $r$ leaves labeled by $[r]$, \S \ref{sec_magma}
\item[$A$:] a binary tree in $\Tr_r$
\item[$E(A)$:] a set of all edges of $A$, \S \ref{sec_model_C}
\item[$\{z_A, x_A, \zeta_e\}_{e\in E(A)}$:] a local coordinate of $X_r(\C)$, \S \ref{sec_model_C}
\item[$T_A$:] a space of formal power series in $z_A, x_A, \zeta_e$, \S \ref{sec_model_C}
\item[$U_A$:] a simply-connected open subset of $\Xr$ associated with $A$, \S \ref{sec_model_C}
\item[$\overline{U}_A$:] an open subset of $\Xr$ without branch cut, \S \ref{sec_model_C}
\item[$\CPaB$:] the parenthesized braid operad, \S \ref{sec_magma}
\item[$\Om\Om$:] the free 2-colored operad generated by the three elements, \S \ref{sec_2magma}
\item[$\Tr^{c}(r),\Tr^o(r,s)$:] 2-colored operad of 2-colored trees, \S \ref{sec_2magma}
\item[$\tilde{\bullet}$:] $\Tr^o(r,s) \rightarrow \Tr_{2r+s},\quad E\mapsto \tilde{E}$, embedding of trees \S \ref{sec_2magma}
\item[$\PaPB^\bullet$:] the parenthesized permutation and braid operad, \S \ref{sec_2magma}
\item[$U_A^c$:] an open subset of $X_r(\C)$ associated with $A \in \Tr^c(r)$ \S \ref{sec_SC}
\item[$U_E^o$:] an open subset of $X_{r,s}(\HH)$ associated with $E \in \Tr^o(r,s)$, \S \ref{sec_SC}
\item[$\Endp_{C,D}^\bullet$:] a $2$-operad defined by coends associated with categories, \S \ref{sec_SC}
\item[$V$:] a (positive graded) vertex operator algebra, \S \ref{sec_VOA}
\item[$I_{\log}\binom{M_0}{M_1M_2}$:] a vector space of logarithmic intertwining operators, \S \ref{sec_VOA}
\item[$I\binom{M_0}{M_1M_2}$:] a vector space of intertwining operators, \S \ref{sec_VOA}
\item[$\Vmodf$:] a category of all $C_1$-cofinite $V$-modules, \S \ref{sec_VOA}
\item[$\CB$:] a chiral conformal block on $X_r(\C)$ \S \ref{sec_chiral_CB}
\item[$\CB^c$:] a full conformal block on $X_r(\C) \times X_r(\C)$ \S \ref{sec_SC}
\item[$\CB^o$:] a boundary conformal block on $X_{r,s}(\HH)$, \S \ref{sec_SC}
\end{itemize}

\vspace{5mm}

\section{Vertex operator algebra and homotopy little 2-disk operad}
\label{sec_pre}
%
In this section, we review the result of \cite{M6} that the fundamental groupoid of the little 2-disk operad (parenthesized braid operad) acts on  conformal blocks of a vertex operator algebra. This result will be extended to conformal blocks defined on the upper-half plane in Section \ref{sec_2magma}.
In Section \ref{sec_magma}, we recall the definition of the parenthesized braid operad $\CPaB$.
In Section \ref{sec_model_C}, we recall the open regions $U_A \subset X_r(\C)$ associated with trees $A\in \Tr_r$.
In Section \ref{sec_VOA}, the consistency of operator product expansions of 
a vertex operator algebra is mathematically formulated.
Section \ref{sec_chiral_CB} and \ref{sec_CPaB} recall the definition of conformal blocks, their glueings and the action of $\CPaB$ on them.


%

\subsection{Magma, trees and braids}
\label{sec_magma}

We first recall the operadic description of iterated products.  The different
ways of forming an \(n\)-fold product from a binary operation are indexed by
binary trees.  These trees form the magma operad, the free operad generated by
one binary operation.  The parenthesized braid operad is obtained by enriching
this picture with braids.


Let $\Tr_{r}$ be the set of all binary trees whose leaves are labeled by $[r]=\{1,2,\dots,r\}$.
Each element in $\Tr_r$ can be regarded as a parenthesized word of $\{1,2,\dots,r\}$,
that is, non-associative, non-commutative monomials on this set in which every letter appears exactly once.
For example, $(5(23))((17)(64))$ corresponds to the tree in Fig. \ref{fig_tree_example0}.
Note that $\Tr_0$ consists of the empty word
and $\Tr_3$, for example, is a set of 12 elements
\begin{align*}
\Tr_0&=\{\emptyset\},\\
\Tr_3&=\{1(23), (12)3, 1(32), (13)2, 2(13),(21)3,2(31),(23)1,3(12),(31)2,3(21),(32)1\}
\end{align*}
and
$\Tr_4$ consists of all permutations of $5$ elements
\begin{align*}
\{(12)(34),1(2(34)),1((23)4),(1(23))4,((12)3)4\}.
\end{align*}

\begin{minipage}[c]{.35\textwidth}
\centering
\begin{forest}
for tree={
  l sep=20pt,
  parent anchor=south,
  align=center
}
[
[[5][[2][3]]]
[[[1][7]]
[[6][4]]]
]
\end{forest}
\captionof{figure}{}
\label{fig_tree_example0}
\end{minipage}
\begin{minipage}[l]{.7\textwidth}
\centering
For $A\in \Tr_r$, we will use the following notations:
\begin{align*}
\Leaf(A)&=\{\text{the set of all leaves of }A\}\\
V(A)&=\{\text{the set of all vertexes of }A  \text{ which are}\\
&\quad\quad\text{ not leaves}\}\\
E(A)&=\{\text{the set of all edges of }A \text{ which are}\\
&\quad\quad\text{ not connected to leaves}\}.
\end{align*}
\end{minipage}

Since binary trees describe the freest possible \(n\)-ary operations, the
collection \(\{\Tr_r\}_{r\ge0}\) forms the free operad generated by a single
binary operation, namely the magma operad.
More explicitly, its operad structure is described as follows:

Let $A \in \Tr_n$ and $B\in \Tr_m$ with $n> 0$ and $p \in [n]$.
The partial composition of the operad is then defined as shown in Fig. \ref{fig_partial_comp}.
The figure shows the composition of $3((12)4)\circ_2 2(13)$.
\begin{figure}[b]
    \centering
    \includegraphics[scale=1]{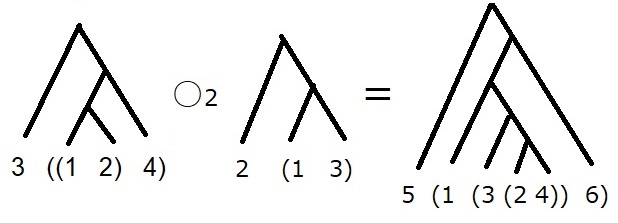}
    \caption{$3((12)4)\circ_2 2(13)$}\label{fig_partial_comp}
\end{figure}
In general, $A\circ_p B$ is defined by inserting the tree $B$ into the leaf labeled with $p$ in $A$,
adding $p-1$ to labels of leaves in $B$, and adding $m-1$ to the labels of leaves after $p+1$ in $A$.
If $B=\emptyset$, the $p$-th leaf in $A$ is simply erased, and the numbers are shifted forward.
For example,
\begin{align*}
3((12)4) \circ_2 \emptyset = 2(13).
\end{align*}
The symmetric group $S_r$ acts on $\Tr_r$ by the permutation of labels,
which satisfies the definition of a symmetric operad.




We next recall the definition of the parenthesized braid operad $\CPaB$ introduced in \cite{Bar,Ta}.
Let $\cat$ be the category of categories, i.e., objects are categories and morphisms are functors. 
By the direct product of categories, $\cat$ has a structure of a symmetric monoidal category.
The notion of operad can be considered in any symmetric monoidal category, and $\CPaB$ is an operad object in $\cat$.

\begin{figure}[t]
  \begin{minipage}[b]{0.45\linewidth}
    \centering
    \includegraphics[width=2.5cm]{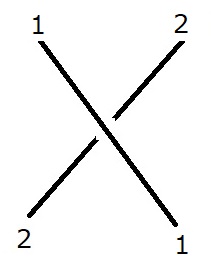}
    \caption{morphism $\sigma$}\label{fig_sigma}
  \end{minipage}
  \begin{minipage}[b]{0.45\linewidth}
    \centering
    \includegraphics[width=3cm]{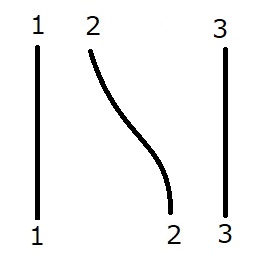}
    \caption{morphism $\alpha$}\label{fig_alpha}
  \end{minipage}
      \begin{minipage}[b]{0.45\linewidth}
    \centering
    \includegraphics[width=3.2cm]{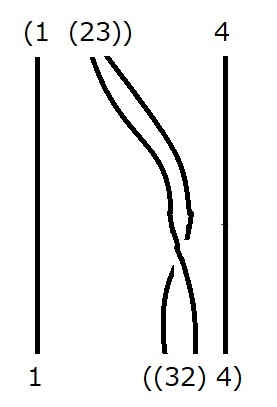}
    \caption{morphism $\alpha\circ_2\sigma$}\label{fig_alphasigma}
  \end{minipage}
    \begin{minipage}[b]{0.45\linewidth}
    \centering
    \includegraphics[width=3.2cm]{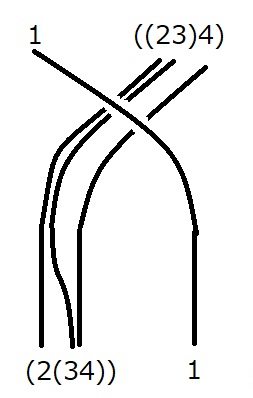}
    \caption{morphism $\si \circ_2\al$}\label{fig_sigmaalpha}
  \end{minipage}
\end{figure}

For each $r\geq 1$, $\CPaB(r)$ is the category defined as follows:
The set of all objects in $\CPaB(r)$ is the set of binary trees $\Tr_r$,
\begin{align*}
\mathrm{Ob}(\CPaB(r))=\Tr_r.
\end{align*}

Let $p:B_r \rightarrow S_r$ be the canonical projection from the braid group to the symmetric group whose kernel is the pure braid group $PB_r$.
Let denote by $g:\Tr_r\rightarrow S_r$ the map given by forgetting the parenthesization and viewing trees as permutations.
Then, for  $A,B \in \Tr_{r}$, the space of homomorphisms is defined by
\begin{align}
\Hom_{\CPaB(r)}(A,B)=\C p^{-1}({g_A^{-1}g_B}),
\label{eq_perm_braid}
\end{align}
where $\C p^{-1}({g_A^{-1}g_B})$ is a $\C$-linear space with a basis $p^{-1}({g_A^{-1}g_B})$.
The composition law is induced from the one on $B_r$.
The symmetric group $S_r$ acts on $\CPaB(r)$ via renumbering the objects $\Tr_r$ and acts identically on morphisms.
The composition
\begin{align*}
\circ_p: \CPaB(n)\times \CPaB(m)\rightarrow \CPaB(n+m-1)
\end{align*}
is given by replacing the $p$-th strand of the first braid, by the second braid made very thin (see Fig. \ref{fig_sigma}, \ref{fig_alpha}, \ref{fig_alphasigma}).
This composition is consistent with the magma operad when restricted to objects.

For $r=0$, $\CPaB(0)$ is a category whose object is the only empty parenthesized word $\emptyset$ and whose 
morphism consists only of the identity map $\Hom(\emptyset,\emptyset)=\C\{\id\}$.
The composition
\begin{align*}
\circ_p: \CPaB(n)\times \CPaB(0)\rightarrow \CPaB(n-1)
\end{align*}
is given by just erasing the $p$-th strand.

\subsection{Configuration space of $\C$ and trees}
\label{sec_model_C}
For $r \geq 1$, set 
\begin{align*}
\Xr&=\{(z_1,\dots,z_r) \in \C^r \mid z_i\neq z_j \text{ for any }i\neq j \},
\end{align*}
which is called {\it an $r$-point configuration space}. 
A chiral conformal block is a multi-valued holomorphic function on $\Xr$,
which may have branch singularities along $\{z_i=z_j\}$.
In this section, we recall the definition of simply-connected open domains $U_A \subset X_r(\C)$ for each tree $A\in \Tr_r$.
By considering conformal blocks on $U_A$, these open domains serve as a link between the parenthesized braid operad and operator product expansions of 2d conformal field theory (see \cite[Section 3]{M6} for more detail).

Let $(z_1,\dots,z_r)$ be the standard coordinate of $\C^r$.
In this section, we will define local coordinates associated with trees $\Tr_r$.
Let $A\in \Tr_r$.
For each edge $e\in E(A)$, let $u(e)$ denote the upper vertex and $d(e)$ denote the lower vertex.
Define maps $L,R: V(A)\rightarrow \Leaf(A)$ as follows:
For each vertex $v\in V(A)$, $R(v)$ is defined by the rightmost leaf that is the descendant of $v$
and $L(v)$ by the rightmost leaf among the leaves that are descendants of the child to the left of $v$. 
Let $t_A$ be the uppermost vertex and $r_A$ be the rightmost leaf among all leaves.
Then, $r_A=R(t_A)$.

\begin{minipage}[c]{.5\textwidth}
\centering
\begin{forest}
for tree={
  l sep=20pt,
  parent anchor=south,
  align=center
}
[$t_{A}$
[$v_1$[5][[2][3]]]
[$v_2$,edge label={node[midway,right]{$e_0$}}[[1][7]]
[[6][4]]]
]
\end{forest}
\end{minipage}
\begin{minipage}[c]{.5\textwidth}
\centering
In the case of the left figure,
\begin{align*}
A&=(5(23))((17)(64))\\
d(e_0)&=v_2\quad u(e_0)=t_{A}\\
L(v_1)&=5 \quad R(v_1)=3\\
L(v_2)&=7 \quad R(v_2)=4\\
r_A&=4.
\end{align*}
\end{minipage}


The functions $\{z_v:\Xr\rightarrow \C\}_{v\in V(A)}$ and $\{\zeta_e:\Xr\rightarrow \C\}_{e\in E(A)}$ are
defined by
\begin{align}
z_v &= z_{L(v)}-z_{R(v)},\\
\zeta_e &= \frac{z_{d(e)}}{z_{u(e)}}
\end{align}
This gives the family of $r-1$ functions $\{z_v:\Xr\rightarrow \C\}_{v\in V(A)}$
and the family of $r-2$ functions $\{\zeta_e:\Xr\rightarrow \C\}_{e\in E(A)}$.
Let $z_A:\Xr \rightarrow \C,\quad(z_1,\dots,z_r)\mapsto z_{r_A}$ be the projection onto the $r_A$-th component.
Then,
\begin{align*}
(z_v)_{v\in V(A)}\times z_A: \Xr\rightarrow \C^{r-1}\times \C
\end{align*}
forms a local coordinate on $X_r(\C)$.

\begin{minipage}[c]{.5\textwidth}
\centering
To see this coordinate, it is easier to draw a tree with the function $z_v=z_{L(v)}-z_{R(v)}$ filled in at each vertex $v \in V(A)$.
The right figure is an example for $(23)((15)4)  \in P_5$.
\end{minipage}
\begin{minipage}[c]{.5\textwidth}
\centering
\begin{forest}
for tree={
  l sep=20pt,
  parent anchor=south,
  align=center
}
[$z_3-z_4$
[$z_2-z_3$,edge label={node[midway,left]{a}}[2][3]]
[$z_5-z_4$,edge label={node[midway,right]{c}}[$z_1-z_5$,edge label={node[midway,left]{b}}[1][5]]
[4]]
]
\end{forest}
\captionof{figure}{}
\label{fig_tree_example2}
\end{minipage}

{\it The $A$-coordinate system} is the system of functions
\begin{align}
\Psi_A = 
z_A\times x_A\times (\zeta_e)_{e\in E(A)}: \Xr\rightarrow \C\times \C\times  \C^{r-2},
\label{eq_zeta_coordinate}
\end{align}
where $x_A=z_{t_A}:\Xr \rightarrow \C$.

For $A=(23)((15)4) \in \Tr_5$, $A$-coordinate is given as:
\begin{align}
\Psi_{(23)((15)4)}=
(z_4,z_3-z_4,\ze_a=\frac{z_2-z_3}{z_3-z_4},\ze_b=\frac{z_1-z_5}{z_5-z_4},\ze_c=\frac{z_5-z_4}{z_3-z_4}),
\label{eq_example_psi}
\end{align}
where the labels $\{a,b,c\}$ of edges are given as in Fig. \ref{fig_tree_example2}.

It is easy to see that the inverse function $\Psi_A^{-1}:\C\times \C^{E(A)}\rightarrow \C^r$ is a polynomial
of $\{\ze_e\}_{e\in E(A)}$ and $x_A,z_A$.
For example,
\begin{align*}
\Psi_{(23)((15)4)}^{-1}=(z_1,z_2,z_3,z_4,z_5)
=(x_A\ze_c(1+\ze_b)+z_A, (1+\ze_a)x_A+z_A,x_A+z_A,z_A,\ze_cx_A+z_A).
\end{align*}
Thus, we have:
\begin{prop}
\label{prop_psiA}
For any $A\in \Tr_r$, 
$\Psi_A$ is a bi-holomorphic function from $X_r(\C)$ onto the image in $\C^r$.
Furthermore, $\Psi_A^{-1}$ is a polynomial of $\{\ze_e\}_{e\in E(A)}$ and $x_A,z_A$,
and thus can be extended to a holomorphic function $\Psi_A^{-1}:\C^2\times \C^{E(A)}\rightarrow \C^r$.
\end{prop}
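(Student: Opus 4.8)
The plan is to establish the three assertions in turn: that $\Psi_A$ is injective on $X_r(\C)$, that it is a local biholomorphism onto its image, and that the inverse extends to a polynomial map on all of $\C^2\times\C^{E(A)}$. The crucial observation is that the last point essentially does the work for all three: if I can exhibit an explicit polynomial map $P_A:\C^2\times\C^{E(A)}\to\C^r$ and prove that $P_A\circ\Psi_A=\mathrm{id}$ on $X_r(\C)$ (and that $P_A$ lands in $X_r(\C)$ when restricted to $\Psi_A(X_r(\C))$), then injectivity of $\Psi_A$ is immediate, and since $\Psi_A$ is manifestly holomorphic with a holomorphic (indeed polynomial) one-sided inverse, the inverse function theorem combined with the one-sided inverse gives that $\Psi_A$ is biholomorphic onto its image. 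So the heart of the matter is the construction and verification of $P_A$.

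First I would set up the construction of $P_A$ by induction on the tree $A$, or equivalently by reading off the coordinates $z_1,\dots,z_r$ from the data $(z_A, x_A, (\zeta_e)_{e\in E(A)})$ from the top of the tree downward. The guiding formulas are $z_v = z_{L(v)}-z_{R(v)}$ at each internal vertex $v$ and $\zeta_e = x_{d(e)}/x_{u(e)}$ along each internal edge $e$, together with $x_A = z_{t_A}$ and $z_A = z_{r_A}$. Starting from the root $t_A$ we know $z_{r_A}=z_A$ and $x_{t_A}=x_A$; descending along an edge $e$ we recover $x_{d(e)} = \zeta_e\, x_{u(e)}$, which expresses each $x_v$ as a monomial in $x_A$ and the $\zeta_e$'s; and then the leaf coordinates are recovered by summing appropriate $x_v$'s and $x_v\zeta_e$'s along the unique path from the root, exactly as in the worked example $\Psi_{(23)((15)4)}^{-1}$ given just above. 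I would phrase this recursion cleanly and note that at each step we only ever multiply or add previously-constructed polynomials, so $P_A$ is a polynomial in $x_A,z_A$ and the $\zeta_e$; this also recovers the stated polynomial form. The identity $P_A\circ\Psi_A=\mathrm{id}$ then follows by unwinding the same recursion, since by construction each step inverts the defining relation for the corresponding coordinate function.

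Next I would check that $\Psi_A$ actually maps into the locus where $P_A$ behaves well and that $P_A$ maps $\Psi_A(X_r(\C))$ back into $X_r(\C)$: the point is that on $X_r(\C)$ all the differences $z_{L(v)}-z_{R(v)}$ are controlled but need not be nonzero in general, yet the relevant denominators appearing in $\Psi_A$ itself (namely the $x_{u(e)}$ in $\zeta_e=x_{d(e)}/x_{u(e)}$) are nonzero precisely because $x_{u(e)}=z_{L(u(e))}-z_{R(u(e))}\neq0$ on $X_r(\C)$ — here one uses that $L(v)$ and $R(v)$ are leaves lying in different children of $v$, hence distinct, so their coordinates differ on the configuration space. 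Then $P_A$ restricted to the image is a genuine two-sided inverse, and biholomorphy onto the image follows. Finally, since $P_A$ is defined by polynomials on all of $\C^2\times\C^{E(A)}$, setting $\Psi_A^{-1}:=P_A$ gives the desired holomorphic (polynomial) extension.

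The main obstacle I anticipate is purely bookkeeping rather than conceptual: getting the recursion for $P_A$ stated in a notationally clean, tree-indexed way — keeping careful track of which leaves are $L(v)$ and $R(v)$, which paths from the root accumulate which monomials in the $\zeta_e$, and ensuring the base case (small trees, and the degenerate behaviour near leaves where $E(A)$ contributes nothing) is handled — so that the verification $P_A\circ\Psi_A=\mathrm{id}$ is transparent. The analytic content (inverse function theorem, nonvanishing of $x_{u(e)}$ on $X_r(\C)$) is routine; the care goes into the combinatorics of the tree coordinates, which is exactly why the worked example $\Psi_{(23)((15)4)}$ is included to anchor the general pattern.
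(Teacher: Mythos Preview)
Your proposal is correct and matches the paper's approach: the paper simply remarks ``It is easy to see that the inverse function $\Psi_A^{-1}$ is a polynomial of $\{\zeta_e\}_{e\in E(A)}$ and $x_A,z_A$,'' exhibits the explicit inverse for the running example $(23)((15)4)$, and states the proposition without further proof. Your tree-recursive construction of $P_A$ (recovering each $x_v$ as a monomial in $x_A$ and the $\zeta_e$'s by descending from the root, then reading off each $z_i$ as $z_A$ plus a sum of such monomials along the path from the root to leaf $i$) is exactly the pattern the example illustrates, and your observation that $x_{u(e)}=z_{L(u(e))}-z_{R(u(e))}\neq 0$ on $X_r(\C)$ is the only nontrivial check needed for $\Psi_A$ to be well-defined there.
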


Set 
\begin{align*}
\Or^\alg = \C[z_1,\dots,z_r,(z_i-z_j)^\pm],
\end{align*}
a ring of regular functions on $X_r(\C)$, and
\begin{align*}
T_A &= \C[[\zeta_e\mid e \in E(A)]][z_A,\log x_A, x_A^\C,\log\zeta_e,\zeta_e^\C \mid e \in E(A)],
\end{align*}
which is a space of formal power series spanned by
the finite sum of formal power series of the form:
\begin{align*}
z_A^n x_A^r (\log x_A)^k \Pi_{e\in E(A)}\zeta_e^{r_e}(\log \zeta_e)^{k_e}F
\end{align*}
with $F\in\C[[\zeta_e\mid e \in E(A)]]$,
$n,k,k_e \in \Z_{\geq 0}$ and $r_e,r \in \C$ ($e\in E(A)$).

Any function of $\Or^\alg$ can be expanded as a formal power series in $T_A$.
For example, in the case of $A=(23)((15)4) \in \Tr_5$, we have:
\begin{align}
\begin{split}
(z_2-z_1)^{-1} &= \left((z_2-z_3)+(z_3-z_4)-(z_5-z_4)-(z_1-z_5)
\right)^{-1}\\
&= (z_3-z_4)^{-1}\left(1 + \frac{(z_2-z_3)}{(z_3-z_4)}-\frac{(z_5-z_4)}{(z_3-z_4)}-\frac{(z_1-z_5)}{(z_3-z_4)}
\right)^{-1}\\
&=x_{(23)((15)4)}^{-1}(1+\zeta_a-\zeta_c-\zeta_b\zeta_c)^{-1}\\
&=x_{(23)((15)4)}^{-1}\sum_{l=0}^\infty (-\zeta_a+\zeta_c+\zeta_b\zeta_c)^l
\in \C[[\zeta_a,\zeta_b,\zeta_c]][x_{(23)((15)4)}^{-1}].\label{eq_example_conv}
\end{split}
\end{align}

The series in $T_A$ is called {\it a parenthesized formal power series}.
It is noteworthy that $T_A$ naturally inherits a $\Or^\alg$-algebra structure, by the $\C$-algebra homomorphism:
\begin{align}
e_A:\Or^\alg \rightarrow T_A \label{lem_module_ort}
\end{align}
In particular, $T_A$ is an $\Ort^\alg$-module.

Next, consider the radius of convergence of parenthesized formal power series.
For $p>0$, set
\begin{align*}
\D_p &= \{\ze\in \C\mid |\ze|<p\},\\
\D_p^\times &=\{\ze \in \C\mid 0<|\ze|<p\}.
\end{align*}

Let $\mathfrak{p}=(p_e)_{e\in E(A)}\in \R_{>0}^{E(A)}$.
Let $\C[[\zeta_e\mid e\in E(A)]]_{\mathfrak{p}}^\conv$
be a subspace of $\C[[\zeta_e\mid e\in E(A)]]$
consisting of formal power series which is absolutely convergent in $\Pi_{e\in E(A)}\D_{p_e}$
and set
\begin{align*}
T_A^{\mathfrak{p}}=\C[[\zeta_e\mid e\in E(A)]]_{\mathfrak{p}}^\conv[z_A,z_v^\C,\log z_v\mid v\in V(A)],
\end{align*}
a subspace of $T_A$.
It is important to note that the region of absolute convergence of 
$e_{(23)((15)4)}((z_2-z_1)^{-1})$ in the example  \eqref{eq_example_conv}
is not $|\zeta_a|<1,|\zeta_b|<1,|\zeta_c|<1$.
Since 
$e_{(23)((15)4)}((z_2-z_1)^{-1})=x_{{(23)((15)4)}}^{-1}\sum_{l=0}^\infty (-\zeta_a+\zeta_c+\zeta_b\ze_c)^l$,
if $p_a+p_bp_c+p_c <1$, then $e_{(23)((15)4)}\left((z_2-z_1)^{-1}\right) \in T_{(23)((15)4)}^{\mathfrak{p}}$.


\begin{dfn}
A sequence of positive real numbers $(p_e)_{e\in E(A)} \in \R_{>0}^{E(A)}$ is called {\it $A$-admissible} if $\Psi_A^{-1}(\C\times \C^\times \times \Pi_{e\in E(A)}\D_{p_e}^\times) \subset X_r(\C)$,
where $\Psi_A^{-1}$ is the polynomials in Proposition \ref{prop_psiA}.
\end{dfn}

A convergent series $f\in T_A^{\mathfrak{p}}$ is a multi-valued holomorphic function on $\Psi_A^{-1}(\C \times \C^\times \times \Pi_{e\in E(A)}\D_{p_e}^\times)$ 
because it contains $\log(z_v)$ and $z_v^r$.
Below, we will fix the branch.
For $A$-admissible numbers $\mathfrak{p}$,
set
\begin{align*}
U_{A}^{\mathfrak{p}}&=\Psi_A^{-1}(\C \times \C^\cut \times \Pi_{e\in E(A)}\D_{p_e}^\cut),\\
\overline{U}_{A}^{\mathfrak{p}}&=\Psi_A^{-1}(\C \times \C^\times \times \Pi_{e\in E(A)}\D_{p_e}^\times),
\end{align*}
where 
\begin{align*}
\R_-&= \{r\in \R\mid r\leq 0\},\\
\C^\cut &= \C \setminus \R_-,\\
\D_p^\cut &=\{\ze \in \C^\cut \mid |\ze|<p \}.
\end{align*}

Define the branch of $\Log:\C^\cut \rightarrow \C$ by
\begin{align}
\Log(\exp(\pi i t))= \pi i t
\label{eq_log_def}
\end{align}
for $t\in (-1,1)$. In particular, $\Arg= \mathrm{Im}\,\Log$ takes the values in $(-\pi,\pi)$.

Then, each formal power series in $T_A^{\mathfrak{p}}$
can be regarded as a single-valued holomorphic function on
$U_{A}^\mathfrak{p}$.
Set
\begin{align}
U_A  &= \cup_{\mathfrak{p}:A\text{-admissible}}U_{A}^{\mathfrak{p}} \subset \Xr,\\
\overline{U}_{A}  &= \cup_{\mathfrak{p}:A\text{-admissible}}\overline{U}_{A}^{\mathfrak{p}} \subset \Xr.\label{eq_no_cut}
\end{align}
Note that $U_A$ are connected simply-connected open subsets of $\Xr$.
Set
\begin{align*}
T_A^\conv = \cap_{\mathfrak{p}:A\text{-admissible}} T_A^\mathfrak{p} \subset T_A,
\end{align*}
which is a linear space of convergent parenthesized formal power series.
Then, by \eqref{lem_module_ort}, we have a $\C$-algebra homomorphism
\begin{align}
e_A: \Or^\alg \rightarrow T_A^\conv.
\label{eq_eA_conv}
\end{align}

Set
\begin{align*}
\pa_i=\frac{d}{dz_i},
\end{align*}
the partial differential operator  on $\Xr$ with respect to the standard coordinate $(z_1,\dots,z_r)$,
and
\begin{align*}
\Dr = \C[\pa_1,\dots,\pa_r, z_1,\dots,z_r,(z_i-z_j)^\pm\mid 1\leq i <j\leq r],
\end{align*}
a ring of differential operators on $\Xr$. Then, it is clear that $T_A^\conv$ and $\Or^\alg$ are $\Dr$-modules
and $e_A$ \eqref{eq_eA_conv} is a $\Dr$-module homomorphism.

\subsection{Vertex operator algebra and trees}
\label{sec_VOA}
A vertex operator algebra (VOA) is roughly an algebra with infinitely many products depending on the complex parameter $z\in \C^\times$:
\begin{align*}
\cdot_z: V\otimes V\rightarrow V((z)),
\end{align*}
which is called an {\it operator product expansion} in physics.
By repeating the products, we can get $n$-ary operations, which depend on $(z_1,\dots,z_n) \in X_r(\C)$.
This physically corresponds to the probability amplitude of the state in which $a_1,\dots,a_n \in V$ are inserted at $n$ points on the Riemann sphere (Fig \ref{fig_state}).
\begin{figure}[h]
  \begin{minipage}[b]{0.45\linewidth}
    \centering
    \includegraphics[width=4.5cm]{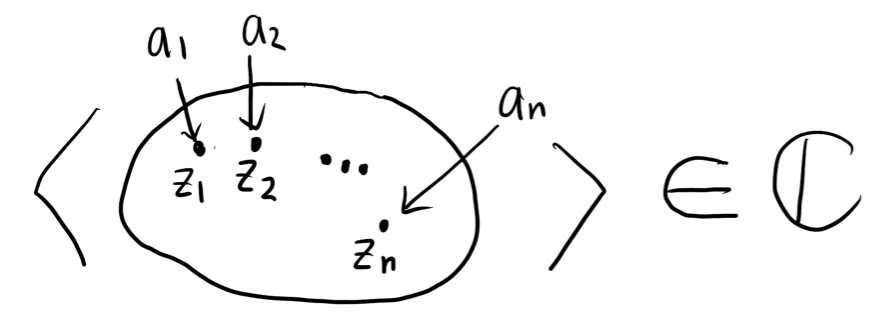}
    \caption{state}\label{fig_state}
  \end{minipage}
\end{figure}

No matter what order and parentheses we take (which correspond to binary trees in Section \ref{sec_magma}), they should coincide as analytic functions on $X_r(\C)$, since they correspond to the probability amplitudes of the same physical state in Fig \ref{fig_state}. This will be mathematically stated in Definition \ref{dfn_VOA_consistent}, which we call a consistency of operator product expansions.
In this section, we review the basic definition of a VOA and describe its consistency.
%
%
%
%
%

We first recall the definition of a $\Z$-graded vertex algebra based on \cite{FB,LL,Li2}:
\begin{dfn}\label{def_ZVA}
A {\it $\Z$-graded vertex algebra} is a $\Z$-graded $\C$-vector space $V=\bigoplus_{n\in \Z} V_n$ equipped with a linear map
$$Y(-,z):V \rightarrow \End (V)[[z^\pm]],\; a\mapsto Y(a,z)=\sum_{n \in \Z}a(n)z^{-n-1}$$
and an element $\va \in V_0$ satisfying the following conditions:
\begin{enumerate}
\item[V1)]
For any $a,b \in V$, $Y(a,z)b \in V((z))$;
\item[V2)]
For any $a \in V$, $Y(a,z)\va \in V[[z]]$ and $\lim_{z \to 0}Y(a,z)\va = a(-1)\va=a$;
\item[V3)]
$Y(\va,z)=\mathrm{id} \in \End V$;
\item[V4)]
For any $a,b,c \in V$ and $u \in V^\vee$, there exists $\mu(z_1,z_2) \in \C[z_1^\pm,z_2^\pm,(z_1-z_2)^\pm]$ such that
\begin{align*}
u(Y(a,z_1)Y(b,z_2)c) &= \mu|_{|z_1|>|z_2|}, \\
u(Y(Y(a,z_0)b,z_2)c) &= \mu|_{|z_2|>|z_1-z_2|},\\
u(Y(b,z_2)Y(a,z_1)c)&=\mu|_{|z_2|>|z_1|},
\end{align*}
where $z_0=z_1-z_2$;
\item[V5)]
For any $a \in V$, $z\frac{d}{dz}Y(a,z) = [L(0),Y(a,z)]-Y(L(0)a,z)$.
\end{enumerate}
\end{dfn}

\begin{rem}
\label{rem_vertex_as_product}
The vertex operator $Y(-,z):V\rightarrow \End V[[z^\pm]]$ defines ``a product'' depending on $z$ on $V$.
For psychological reasons, in this section, we write $Y(a,z)b$ as follows:
\begin{align*}
a \cdot_z b= Y(a,z)b.
\end{align*}
Then the axiom (V4) of vertex algebra is nothing but the following equality
\begin{align*}
a \cdot_{z_1} \Bigl(b \cdot_{z_2} c \Bigr) 
 &=_{a.c.} \Bigl( a \cdot_{z_1-z_2} b \Bigr)\cdot_{z_2} c\\
 &=_{a.c.}  b \cdot_{z_2} \Bigl(a \cdot_{z_1}  c\Bigr).
\end{align*}
Note that $=_{a.c.}$ means ``equal up to an analytic continuation''.
This means that the product $\cdot_z :V \otimes V \rightarrow V((z))$ is associative and commutative.
%
%
%
\end{rem}

\begin{dfn}\label{def_conformal_vector}
A positive graded vertex operator algebra is a $\Z$-graded vertex algebra $V$ with a distinguished element $\om \in V_2$, called a {\it conformal vector}, such that:
\begin{enumerate}
\item
There exists a scalar $c\in \C$ such that
\begin{align*}
[L(n),L(m)]=(n-m)L(n+m) + c\delta_{n+m,0} \frac{n^3-n}{12},
\end{align*}
where $L(n)=\om(n+1)$;
\item
$\om(0)a=a(-2)\va$ and $\om(1)a=na$ for any $a\in V_n$;
\item
$V_n=0$ for $n<0$, $\dim V_n < \infty$ and $V_0=\C \va$.
\end{enumerate}
\end{dfn}


Let $A \in \Tr_r$, which corresponds to the parenthesized product as in Section \ref{sec_magma}.
As was mentioned in Remark \ref{rem_vertex_as_product}, the vertex operator is a product depending on $z$, so we can consider the corresponding parenthesized product.
For example, the product corresponding to $1(23), (12)3 \in \Tr_3$ is given by
\begin{align*}
a_1 \cdot_{z_{13}} \Bigl(a_2 \cdot_{z_{23}} a_3 \Bigr)&= Y(a_1,z_{13})Y(a_2,z_{23})a_3\\
\Bigl( a_1 \cdot_{z_{12}} a_2 \Bigr) \cdot_{z_{23}} a_3&= Y(Y(a_1,z_{12})a_2,z_{23})a_3.
\end{align*}
It is important to note that the variables of the vertex operators depend on the shape of the tree.
Note that each vertex operator corresponds a vertex of the tree, $v\in V(A)$,
and the variables $\{z_v\}_{v\in V(A)}$, given in Section \ref{sec_model_C}, give the correct ones (see Fig \ref{fig_tree_vertex}).

\begin{minipage}[c]{.5\textwidth}
\centering
\begin{forest}
for tree={
  l sep=20pt,
  parent anchor=south,
  align=center
}
[$z_3-z_4$
[$z_2-z_3$,edge label={node[midway,left]{}}[2][3]]
[$z_5-z_4$,edge label={node[midway,right]{}}[$z_1-z_5$,edge label={node[midway,left]{}}[1][5]]
[4]]
]
\end{forest}
\captionof{figure}{}
\label{fig_tree_vertex}
\end{minipage}
\begin{minipage}[c]{.5\textwidth}
\centering
\begin{align}
&\Bigl( a_2 \cdot_{z_{23}} a_3 \Bigr) \cdot_{z_{34}} 
\Biggl(\Bigl( a_1 \cdot_{z_{15}} a_5 \Bigr) \cdot_{z_{54}}a_4 \Biggr)\nonumber \\
&=
Y(Y(a_2,z_{23})a_3,z_{34})Y(Y(a_1,z_{15})a_5,z_{54})a_4
\label{eq_vertex_comp}
\end{align}
\end{minipage}

In general, for trees $A \in \Tr_r$ and $a_1,\dots,a_r \in V$, denote by $Y_A(a_1,\dots,a_r,z_1,\dots,z_r)$ the composition of vertex operators defined by \eqref{eq_vertex_comp}, which we call a {\it parenthesized vertex operator}.
For $u\in V^\vee$, $\langle u, \exp(L(-1)z_A) Y_A(a_1,\dots,a_r,z_1,\dots,z_r)\rangle$ is a formal power series in $T_A$.

%
%

\begin{dfn}\label{dfn_VOA_consistent}
A positive graded vertex operator algebra $V$ is called consistent if the following properties hold for any $u \in V^\vee$ and $a_{[r]}\in V^{\otimes r}$ for $r\geq 2$:
\begin{enumerate}
\item
For any tree $A\in \Tr_r$, the formal power series $\langle u, \exp(L(-1)z_A)Y_A(a_1,\dots,a_r,z_{[r]})\rangle$ is in $T_A^\conv$, i.e., is absolutely convergent in $\overline{U}_A$.
Denote this holomorphic function on $\overline{U}_A$ by $S_A(u,a_{[r]},z_{[r]})$.
\item
There exists a sequence of linear maps
\begin{align*}
S_r:V^\vee \otimes V^{\otimes r} \rightarrow \C[z_i,(z_i-z_j)^\pm\mid i\neq j],\quad\quad r \geq 2.
\end{align*}
such that $S_A(u,a_{[r]},z_{[r]}) = S_r |_{U_A}$ for any tree $A\in \Tr_r$ as holomorphic functions.
\end{enumerate}
\end{dfn}
This definition says that the product of a vertex operator algebra is consistent, i.e., it is uniquely determined regardless of the order and parentheses of the products.
The following proposition can be proved  by elementary and direct ways, but we omit the proof because we will give the proof in more non-trivial setting later (Theorem \ref{thm_bulk}):
\begin{prop}\label{prop_VOA_consistent}
A positive graded vertex operator algebra is consistent.
\end{prop}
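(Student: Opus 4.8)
The plan is to prove consistency of a positive graded vertex operator algebra by reducing the statement to facts about the $\Dr$-module structure of the spaces $T_A^\conv$ and $\Or^\alg$ established in Section~\ref{sec_model_C}, together with the basic axioms (V1)--(V5) of a $\Z$-graded vertex algebra. First I would establish part (1) of Definition~\ref{dfn_VOA_consistent}: for each tree $A\in\Tr_r$ and each $u\in V^\vee$, $a_{[r]}\in V^{\otimes r}$, the matrix coefficient $\langle u,\exp(L(-1)z_A)Y_A(a_1,\dots,a_r,z_{[r]})\rangle$ lies in $T_A^\conv$. For a fixed grading piece $u\in V_n^\vee$ the axioms (V1) and (V5) show that each vertex operator contributes a Laurent expansion in its own variable $z_v$ that is a \emph{finite} Laurent series modulo higher-order terms, and since $V$ is positively graded with finite-dimensional graded pieces, the iterated composition \eqref{eq_vertex_comp} is a well-defined element of the formal series space $T_A$. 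Absolute convergence in $\overline U_A$ then follows by comparing with the geometric-series estimates implicit in the polynomial nature of $\Psi_A^{-1}$ (Proposition~\ref{prop_psiA}): the nested OPEs converge precisely on the polydisks cut out by $A$-admissible $\mathfrak p$, exactly as in the model computation \eqref{eq_example_conv}.

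The heart of the argument is part (2): producing a single sequence $S_r:V^\vee\otimes V^{\otimes r}\to\C[z_i,(z_i-z_j)^\pm]$ with $S_A = S_r|_{U_A}$ for \emph{every} tree $A$. My approach is to show that all the $S_A$ are restrictions of a common rational function, by exploiting the commutativity/associativity axiom (V4) to pass between adjacent trees. Concretely, (V4) says that for any $a,b,c$ and $u$ the three expansions $u(Y(a,z_1)Y(b,z_2)c)$, $u(Y(Y(a,z_0)b,z_2)c)$, $u(Y(b,z_2)Y(a,z_1)c)$ are expansions of one rational $\mu\in\C[z_1^\pm,z_2^\pm,(z_1-z_2)^\pm]$ in three different regions. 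This is precisely the $r=3$ case. The general case follows by induction on $r$: any two trees in $\Tr_r$ are connected by a sequence of elementary moves (a single application of associativity $\circ_p$ at an internal edge, or a single commutation), and each such move is governed by an instance of (V4) applied inside one vertex of the composition \eqref{eq_vertex_comp}, with the remaining vertices acting as ``spectators''. Since $e_A:\Or^\alg\to T_A^\conv$ is an injective $\Dr$-module homomorphism and $\Or^\alg=\C[z_i,(z_i-z_j)^\pm]$ is exactly the target of $S_r$, the rationality of $S_r$ is automatic once we know the various $S_A$ agree on overlaps of the $U_A$ after analytic continuation; the overlaps are nonempty because the $U_A$ are connected simply-connected open subsets of the connected space $\Xr$ and the moves exhibit explicit common subdomains.

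The step I expect to be the main obstacle is bookkeeping the \emph{change of variables} and the branch choices when moving between trees. The variables $\{z_v\}_{v\in V(A)}$ attached to a tree are tree-dependent linear combinations of the $z_i$, and a single associativity move at an edge $e_0$ rewrites one vertex's variable $z_v$ in terms of $x_{u(e_0)}$ and $\zeta_{e_0}$; one must check that the resulting re-expansion is literally the $\Dr$-module map $e_{A'}\circ e_A^{-1}$ composed with the $r=3$ instance of (V4), and that the fixed branch of $\Log$ from \eqref{eq_log_def} is respected so that the holomorphic functions $S_A$ on $U_A$ and $S_{A'}$ on $U_{A'}$ genuinely coincide (not merely up to monodromy) on the common subdomain. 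Once this is checked for the elementary moves, a connectivity argument on the ``tree exchange graph'' (which is connected --- it surjects onto the symmetric group via $g$ and the associahedron handles the parenthesizations) assembles the local agreements into the global $S_r$. The positivity and finite-dimensionality of graded pieces are used only to guarantee that everything in sight is a well-defined formal series before convergence is invoked, so no deeper input about $V$-modules is needed at this level; the genuinely hard analytic work has already been done in \cite[Theorem 4.23]{M6} and will be reused in the non-trivial setting of Theorem~\ref{thm_bulk}, which is why the proof here is omitted.
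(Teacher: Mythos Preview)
Your proposal is essentially correct and is the ``elementary and direct'' route that the paper explicitly mentions but declines to write out. The paper instead defers entirely to Theorem~\ref{thm_bulk}, whose proof works through the conformal block formalism: it identifies $Y$ with a section of $\CB^c_{F,F,F}(12)$, invokes the glueing map (Theorem~\ref{thm_glue}) to realize each $Y_A$ as a section of $\CB(U_A)$, and then proves the analogue of your tree-connectivity step by checking invariance under the two generators $\alpha:(12)3\to 1(23)$ and $\sigma:12\to 21$ of $\CPaB$ and appealing to the lax operad morphism (Theorem~\ref{thm_action}/\ref{thm_SC_action}) to propagate to all trees. So both arguments share the same skeleton---reduce to the $r=2,3$ axioms and propagate via operad/tree-graph connectivity---but yours does it by hand using VOA rationality, while the paper does it by analytic continuation inside a locally constant sheaf. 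Your approach is lighter here because VOA correlators are honest elements of $\C[z_i^\pm,(z_i-z_j)^\pm]$; the paper's approach is heavier but is the one that survives when the correlators are genuinely multivalued (full/boundary case), which is the real point of the paper.

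One small circularity to fix in your part~(1): you argue convergence of $\langle u,\exp(L(-1)z_A)Y_A(a_{[r]},z_{[r]})\rangle$ in $\overline U_A$ by appealing to ``geometric-series estimates'' modeled on \eqref{eq_example_conv}. But \eqref{eq_example_conv} is the expansion of an element of $\Or^\alg$ that you already know to be rational; an arbitrary element of $T_A$ has no such estimate. The clean fix is to reverse the order: first establish (using weak commutativity $(z_i-z_j)^N[Y(a_i,z_i),Y(a_j,z_j)]=0$, which is immediate from (V4)) that the matrix coefficient for the standard tree $1(2(\cdots(r{-}1,r)\cdots))$ lies in $\C[z_i^\pm,(z_i-z_j)^\pm]$, and then obtain convergence for all trees as a consequence via your part~(2) moves and the $\Dr$-algebra map $e_A:\Or^\alg\to T_A^\conv$. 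With that reordering your sketch goes through.
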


The following proposition gives the actual convergence region when taking special trees as an example. The second tree in this proposition is not needed in this paper, but will be used to prove the cluster decomposition property of  correlation functions of bulk CFT in \cite{AMT}:
\begin{prop}
\label{rem_tree_region}
For any $r>0$,
\begin{align*}
\overline{U}_{1(2(3(\dots (r-1,r)\dots))} &=\{(z_1,\dots,z_r)\in \C^r \mid |z_1-z_r|>|z_2-z_r|>\cdots >|z_{r-1}-z_r|>0\}
\end{align*}
and for any $m,n>0$,
\begin{align*}
&\overline{U}_{\Bigl(1(2(3\cdots (m-1, m))\cdots)\Bigr)\Bigl(m+1(m+2(m+3\cdots (m+n-1,m+n )\cdots)\Bigr)}\\
&=\{|z_{i+1m}|<|z_{im}|, |z_{j+1,n+m}|<|z_{j,n+m}|, |z_{1m}|+|z_{m+1,m+n}|<|z_{m,n+m}|\\
&\text{ for }1 \leq i \leq m-2, m+1 \leq j\leq m+n-2\}.
\end{align*}
\end{prop}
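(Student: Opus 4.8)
The plan is to compute the $A$-coordinate system $\Psi_A$ of \S\ref{sec_model_C} explicitly for each of the two trees and then read off $\overline{U}_A$ directly from its definition $\overline{U}_A=\bigcup_{\mathfrak p}\Psi_A^{-1}(\C\times\C^\times\times\prod_{e\in E(A)}\D_{p_e}^\times)$, the union being over $A$-admissible $\mathfrak p$. For the comb $A=1(2(\cdots(r-1,r)\cdots))$ the non-leaf vertices are $v_1,\dots,v_{r-1}$, with $v_i$ the parent of leaf $i$, so that $L(v_i)=i$, $R(v_i)=r$, $z_{v_i}=z_i-z_r$, and since $t_A=v_1$, $r_A=r$ one gets $x_A=z_1-z_r$, $z_A=z_r$; the non-leaf edges are $e_1,\dots,e_{r-2}$ with $e_i$ running from $v_i$ down to $v_{i+1}$, hence $\zeta_{e_i}=(z_{i+1}-z_r)/(z_i-z_r)$ and $z_i-z_r=(z_1-z_r)\prod_{k<i}\zeta_{e_k}$. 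First I would check that $\mathfrak p=(1,\dots,1)$ is $A$-admissible: if $x_A\neq 0$ and each $\zeta_{e_k}$ is nonzero with $|\zeta_{e_k}|<1$, then every partial product $\prod_{i\le k<j}\zeta_{e_k}$ has modulus $<1$, hence is $\neq 1$, which forces $z_i\neq z_j$ for $i<j<r$, while $z_i\neq z_r$ follows from $x_A\neq 0$, so the reconstructed point lies in $X_r(\C)$. Conversely, any admissible $\mathfrak p$ must have $p_{e_k}\le 1$ for all $k$, for otherwise the choice $\zeta_{e_k}=1$ is allowed and produces the collision $z_k=z_{k+1}$. Hence $\overline{U}_A^{\mathfrak p}\subseteq\overline{U}_A^{(1,\dots,1)}$ for every admissible $\mathfrak p$, so $\overline{U}_A=\overline{U}_A^{(1,\dots,1)}=\Psi_A^{-1}(\C\times\C^\times\times\prod_k\D_1^\times)$; translating the conditions $\zeta_{e_k}\neq 0$, $|\zeta_{e_k}|<1$, $z_1\neq z_r$ through $|z_{k+1}-z_r|/|z_k-z_r|=|\zeta_{e_k}|$ gives exactly $|z_1-z_r|>|z_2-z_r|>\cdots>|z_{r-1}-z_r|>0$.

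For the two-comb tree (assume $m,n\ge 2$; when $m=1$ the tree is itself a comb, and when $n=1$ the same computation applies with two fewer edges), writing $t_A$ for the root with left subtree the comb on $\{1,\dots,m\}$ and right subtree the comb on $\{m+1,\dots,m+n\}$, the bookkeeping gives: inside the left comb, vertices $v_1,\dots,v_{m-1}$ with $z_{v_i}=z_i-z_m$ and edges $e^L_i$ ($1\le i\le m-2$) with $\zeta_{e^L_i}=(z_{i+1}-z_m)/(z_i-z_m)$; inside the right comb, vertices $w_1,\dots,w_{n-1}$ with $z_{w_j}=z_{m+j}-z_{m+n}$ and edges $e^R_j$ ($1\le j\le n-2$) with $\zeta_{e^R_j}=(z_{m+j+1}-z_{m+n})/(z_{m+j}-z_{m+n})$; and at the root, $L(t_A)=m$, $R(t_A)=m+n$, giving $x_A=z_m-z_{m+n}$, $z_A=z_{m+n}$, together with the edge $f_L$ from $t_A$ to $v_1$ and the edge $f_R$ from $t_A$ to $w_1$, so $\zeta_{f_L}=(z_1-z_m)/(z_m-z_{m+n})$ and $\zeta_{f_R}=(z_{m+1}-z_{m+n})/(z_m-z_{m+n})$ (a total of $m+n-2$ edges, as it must be). Thus $z_i-z_m=x_A\,\zeta_{f_L}\prod_{k<i}\zeta_{e^L_k}$ for $1\le i\le m$ and $z_{m+j}-z_{m+n}=x_A\,\zeta_{f_R}\prod_{k<j}\zeta_{e^R_k}$ for $1\le j\le n$.

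Next I would determine the $A$-admissible $\mathfrak p$. As in the comb case, collisions within the left (resp.\ right) comb are avoided exactly when $p_{e^L_k}\le 1$ (resp.\ $p_{e^R_k}\le 1$) for all $k$. A collision between a left leaf $z_i$ and a right leaf $z_{m+j}$ is equivalent, after dividing by $x_A\neq 0$, to $1+\zeta_{f_L}\prod_{k<i}\zeta_{e^L_k}-\zeta_{f_R}\prod_{k<j}\zeta_{e^R_k}=0$, and the modulus of the left-hand side is at least $1-|\zeta_{f_L}|-|\zeta_{f_R}|>1-p_{f_L}-p_{f_R}$ (using that the two products have modulus $\le 1$); hence all these collisions are avoided once $p_{f_L}+p_{f_R}\le 1$, while if $p_{f_L}+p_{f_R}>1$ one realizes the collision $z_1=z_{m+1}$ by taking real $\zeta_{f_R}=t$, $\zeta_{f_L}=t-1$ with $t$ in the nonempty open range $(\max(-p_{f_R},1-p_{f_L}),\min(p_{f_R},1+p_{f_L}))\setminus\{0,1\}$. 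So the $A$-admissible radii are precisely those with $p_{e^L_k}\le 1$, $p_{e^R_k}\le 1$ and $p_{f_L}+p_{f_R}\le 1$. Taking the union over them: a configuration lies in $\overline{U}_A$ iff in $A$-coordinates $x_A\neq 0$, all of $\zeta_{e^L_k},\zeta_{e^R_k},\zeta_{f_L},\zeta_{f_R}$ are nonzero, $|\zeta_{e^L_k}|<1$ and $|\zeta_{e^R_k}|<1$ for all $k$, and there exist $p_{f_L},p_{f_R}>0$ with $p_{f_L}+p_{f_R}\le 1$, $|\zeta_{f_L}|<p_{f_L}$, $|\zeta_{f_R}|<p_{f_R}$, i.e.\ $|\zeta_{f_L}|+|\zeta_{f_R}|<1$. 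Translating through $|z_{i+1}-z_m|/|z_i-z_m|=|\zeta_{e^L_i}|$, $|z_{m+j+1}-z_{m+n}|/|z_{m+j}-z_{m+n}|=|\zeta_{e^R_j}|$, $|z_1-z_m|=|\zeta_{f_L}|\,|z_m-z_{m+n}|$ and $|z_{m+1}-z_{m+n}|=|\zeta_{f_R}|\,|z_m-z_{m+n}|$ yields exactly the asserted inequalities (and, as before, the reconstructed configurations automatically have pairwise distinct coordinates, so these inequalities already describe a subset of $X_{m+n}(\C)$).

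The step I expect to be the main obstacle is the coupled ``triangle-type'' constraint $p_{f_L}+p_{f_R}\le 1$: one must check it is both necessary --- by exhibiting the explicit collision $z_1=z_{m+1}$ when it fails --- and sufficient --- via the uniform lower bound on $|1+\zeta_{f_L}\prod_{k<i}\zeta_{e^L_k}-\zeta_{f_R}\prod_{k<j}\zeta_{e^R_k}|$ --- and then pass correctly from the family of admissible polydisks to their union, where the sharp sum condition $|\zeta_{f_L}|+|\zeta_{f_R}|<1$ replaces what would be a single maximal radius. By contrast, the within-comb estimates and the bookkeeping for $\Psi_A$ are routine.
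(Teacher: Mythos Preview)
Your proposal is correct and follows essentially the same approach as the paper: compute the $A$-coordinates explicitly, express each difference $z_i-z_j$ as $x_A$ times a factor of the form $1-\prod\zeta$ (comb) or $1+\zeta_{f_L}\prod\zeta-\zeta_{f_R}\prod\zeta$ (two-comb), and read off non-vanishing from $|\prod\zeta|<1$ respectively $|\zeta_{f_L}|+|\zeta_{f_R}|<1$. The paper's proof only writes out the sufficiency direction (that these constraints give an admissible $\mathfrak p$), whereas you also supply the easy necessity argument (exhibiting the collisions $z_k=z_{k+1}$ and $z_1=z_{m+1}$ when the bounds are exceeded), which makes your version slightly more complete; but the route is the same.
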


\begin{proof}
In the case of $A= 1(2(3(\dots (r-1,r)\dots))\in \Tr_r$, the $A$-coordinate is as follows:
\begin{align*}
\ze_i= \frac{z_{i+1r}}{z_{ir}}\quad\quad\text{ for }i=1,\dots,r-2.
\end{align*}
Let $1 \leq i<j \leq r$. If $j \neq r$, then
\begin{align*}
z_{ij} = z_{ir} -z_{jr} = z_{ir}(1- \ze_{i}\ze_{i+1}\dots\ze_{j-1}).
\end{align*}
Since $(1- \ze_{i}\ze_{i+1}\dots\ze_{j-1})^{-1}$ absolutely converges in $|\ze_{i}\ze_{i+1}\dots\ze_{j-1}|<1$. Hence, the assertion holds.

In the case of $A= \Bigl(1(2(3\cdots (m-1, m))\cdots)\Bigr)\Bigl(m+1(m+2(m+3\cdots (m+n-1,m+n )\cdots)\Bigr) \in \Tr_{n+m}$, the $A$-coordinate is as follows:
\begin{align*}
\ze_l = \frac{z_{1m}}{z_{m,m+1}},\quad
\ze_r = \frac{z_{m+1,m+r}}{z_{m,m+n}},\quad \ze_i= \frac{z_{i+1}}{z_{i}}\quad\quad (i=1,\dots,m-2, m+1,\dots,m+n-2).
\end{align*}
Then, for $1 \leq i < j \leq n+m$, we have:
\begin{align*}
z_{ij} =
\begin{cases}
z_{im} -z_{jm} = z_{im}(1- \ze_{i}\ze_{i+1}\dots\ze_{j-1}) & \text{ if } j \leq m \\
z_{i,m+n} -z_{j,m+n} = z_{i,m+n}(1- \ze_{i}\ze_{i+1}\dots\ze_{j-1}) & \text{ if } i \geq m+1 \\
z_{ir} -z_{jr} = -z_{m,m+n}(1- \ze_{l}\ze_{1}\dots\ze_{i-1}+\ze_r \ze_{m+1}\cdots \ze_{j-1}) & \text{ if } i \leq m \text{ and } j \geq m,
\end{cases}
\end{align*}
which converge if $|\ze_i|<1$ for $i=1,\dots,m-2, m+1,\dots,m+n-2$ and $|\ze_l+|\ze_r|<1$.
\end{proof}

Let $V$ be a positive graded vertex operator algebra. Throughout of this paper, we assume that $V$-module $M$ satisfies the following conditions:
\begin{enumerate}
\item
The action of $L(0)$ on $M$ is locally finite;\\
Denote the generalized eigenspace by $M_h$ for $h\in \C$.
\item
$\dim M_h < \infty$;
\item
There are finitely many $\Delta_i \in \C$ ($i=1,\dots,N$) such that:
\begin{align*}
M = \bigoplus_{i=1}^N \bigoplus_{n\geq 0} M_{\Delta_i+n}.
\end{align*}
\end{enumerate}

Let $M$ be a $V$-module.
For any $n \in \Z_{>0}$, set 
\begin{align*}
C_n(M)= \{a(-n)m\mid m\in M \text{ and }a \in\bigoplus_{k\geq 1}V_k \}.
\end{align*}

\begin{dfn}
A $V$-module $M$ is called {\it $C_n$-cofinite} if $M/C_n(M)$ is a finite-dimensional vector space.
\end{dfn}

Since $(L(-1)a)(-n)=na(-n-1)$ for any $a \in V$ and $n\in \Z_{>0}$,
$C_{n+1}(M) \subset C_{n}(M)$. Hence, if $M$ is $C_{n+1}$-cofinite,
then $M$ is $C_{n}$-cofinite.
Note that any vertex algebra is $C_1$-cofinite by (V2).
Denote by $\Vmodu$ the category of all $V$-modules
and $\Vmodf$ the full subcategory of $\Vmodu$ consisting of all $C_1$-cofinite V-modules $M$ such that the dual module $M^\vee$ is finitely generated.

We will recall the definition of a logarithmic intertwining operator of a vertex operator algebra from \cite{Mi1,Mi2}.
Let $M_0,M_1,M_2$ be $V$-modules.
\begin{dfn}\label{def_int}
{\it A logarithmic intertwining operator} of type $\binom{M_0}{M_1 M_2}$
is a linear map
$$\Y_1(\bullet,z):M_1 \rightarrow \text{Hom} (M_2,M_0)[[z^\C]][\log z],
\; m \mapsto \Y_1(m,z)=\sum_{k \geq 0} \sum_{r \in \C} m(r;k) z^{-r-1}(\log z)^k$$
such that:
\begin{enumerate}
\item[I1)]
For any $m \in M_1$ and $m' \in M_2$,
$\Y_1(m,z)m' \in M_0[[z]][z^\C,\log z]$;
\item[I2)]
$[L(-1),\Y_1(m,z)]=\frac{d}{dz}\Y_1(m,z)$ for any $m \in M_1$;
\item[I3)]
For any $m \in M_1$, $a \in V$ and $n\in \Z$,
\begin{align*}
[a(n), \Y_1(m,z)]&=\sum_{k\geq 0} \binom{n}{k} \Y_1(a(k)m,z)z^{n-k}\\
\Y_1(a(n)m,z)&=\sum_{k\geq 0}\binom{n}{k}
\left( 
a(n-k)\Y_1(m,z)(-z)^k
- \Y_1(m,z)a(k)(-z)^{n-k}
\right).
\end{align*}
\end{enumerate}
\end{dfn}

The space of all logarithmic intertwining operators of type $\binom{M_0}{M_1M_2}$ forms a vector space,
which is denoted by $I_{\log} \binom{M_0}{M_1M_2}$.
If $\Y_1(\bullet,z) \in I_{\log} \binom{M_0}{M_1M_2}$ does not contain any logarithmic term,
i.e., $\Y_1(m,z) \in \mathrm{Hom}(M_2,M_0)[[z]][z^\C]$ for any $m\in M_1$,
then
$\Y_1(\bullet,z)$ is called {\it an intertwining operator} of type $\binom{M_0}{M_1M_2}$ \cite{FHL}.
Denote by $I\binom{M_0}{M_1M_2}$
the space of all intertwining operators of type $\binom{M_0}{M_1M_2}$.


\subsection{Conformal blocks on $\C$}
\label{sec_chiral_CB}
Let $V$ be a vertex operator algebra, $r\in \Z_{>0}$,
and $\{M_i\}_{i =0,1,\dots,r}$ $V$-modules.
Set 
\begin{align*}
\Mr=M_0^\vee \otimes M_1 \otimes \cdots \otimes M_r,
\end{align*}
where $M_0^\vee=\bigoplus_{h\in \C} (M_0)_h^*$ is the dual module of $M_0$.
A conformal block is a sheaf of holomorphic solutions of a $\Dr$-module,
defined for a sequence of $V$-modules $\{M_i\}_{i =0,1,\dots,r}$ (see \cite{FB,NT}).
In this section, we will review conformal blocks and their operadic structures  based on \cite{M6}.

For each $a \in V$, $i =1,\dots,r $ and $n\in \Z$,
define a linear map $a(n)_i:\Mr\rightarrow M_{[r]}$ by
the action of $a(n):M_i \rightarrow M_i$ on the $i$-th component.

On the $0$-th component, 
define $a(n)_0^*:\Mr\rightarrow \Mr$ by $a(n)^*:M_0^\vee \rightarrow M_0^\vee$ on the $0$-th component,
where 
\begin{align*}
(a(n)^*u)(\bullet)=u(a(n)\bullet) \text{ for } u\in M_0^\vee.
\end{align*}


Let $\Or^\alg \otimes \Mr$ be an $\Or^\alg$-module,
where the $\Or^\alg$-module structure is defined by the multiplication on the left component.
Define a $\Dr$-module structure on $\Or^\alg \otimes \Mr$
by
\begin{align}
\pa_i \cdot (f\otimes \mr) = (\pa_i f) \otimes \mr + f\otimes L(-1)_i \mr
\label{eq_D_def}
\end{align}
for $i=1,\dots,r$, $f \in \Or^\alg$, $\mr \in \Mr$.


Let $N_{\Mr}$ be the $\Dr$-submodule of 
$\Or^\alg \otimes \Mr$ generated by the following elements:
\begin{align}
1 \otimes a(n)_i \mr - &\sum_{k \geq 0}\binom{n}{k} (-z_i)^k \otimes a(-k+n)_0^* \mr
+ \sum_{1\leq s\leq r, s \neq i} \sum_{k \geq 0} \binom{n}{k}(z_s-z_i)^{n-k}\otimes a(k)_s \mr,
\label{eq_ker1}
\end{align}
for all $\mr \in \Mr$, $a \in V$ and $i \in \{1,\dots,r\}$ and $n \in \Z$.
Set 
$$
D_{\Mr} = (\Or^\alg \otimes \Mr) / N_{\Mr},
$$
which is a $\Dr$-module.
The following lemma is clear:
\begin{lem}
\label{lem_D_functor}
The assignment of $\Mr$ to $D_{\Mr}$ determines the following $\C$-linear functor:
\begin{align*}
D_{\bullet}:{\Vmodu}^\op \times {\Vmodu}^r &\rightarrow \underline{\Dr \text{-mod}},\\
(M_0,M_1,\dots,M_r) \quad &\mapsto \quad D_{\Mr}.
\end{align*}
\end{lem}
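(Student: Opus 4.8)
The plan is to spell out ``clear'' as three verifications: that a morphism of ${\Vmodu}^\op \times {\Vmodu}^r$ induces a $\Dr$-module homomorphism between the quotients $D_{\Mr}$, that this assignment preserves identities and composition, and that it is $\C$-linear in each of its $r+1$ arguments.

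First I would record that a morphism $(M_0,M_1,\dots,M_r) \to (M_0',M_1',\dots,M_r')$ in the source category is the same datum as a tuple $(\phi_0,\phi_1,\dots,\phi_r)$ of $V$-module maps with $\phi_0: M_0' \to M_0$ and $\phi_i: M_i \to M_i'$ for $i = 1,\dots,r$. Each $\phi_i$ intertwines $L(0) = \om(1)$, hence preserves generalized $L(0)$-eigenspaces; in particular the transpose of $\phi_0$ restricts to a graded map $\phi_0^\vee: M_0^\vee \to (M_0')^\vee$, and a one-line computation from $(a(n)^* u)(\bullet) = u(a(n)\bullet)$ shows $\phi_0^\vee a(n)^* = a(n)^* \phi_0^\vee$. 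Put $\Phi = \phi_0^\vee \otimes \phi_1 \otimes \cdots \otimes \phi_r: \Mr \to \Mr'$. Since each $\phi_i$ ($i \geq 1$) is a $V$-module map and $\phi_0^\vee$ intertwines the transposed action, $\Phi$ commutes with $a(n)_i$ for $i = 1,\dots,r$ and with $a(n)_0^*$; in particular $\Phi\, L(-1)_i = L(-1)_i\, \Phi$ for $i = 1,\dots,r$, as $L(-1) = \om(0)$. Consequently $\id \otimes \Phi: \Or^\alg \otimes \Mr \to \Or^\alg \otimes \Mr'$ commutes tautologically with multiplication by $\Or^\alg$ and, by the definition \eqref{eq_D_def} of the $\Dr$-module structure together with $\Phi\, L(-1)_i = L(-1)_i\, \Phi$, with each $\pa_i$; hence $\id \otimes \Phi$ is $\Dr$-linear.

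Next I would check $(\id \otimes \Phi)(N_{\Mr}) \subseteq N_{\Mr'}$. As $N_{\Mr}$ is generated as a $\Dr$-module by the elements \eqref{eq_ker1} and $\id \otimes \Phi$ is $\Dr$-linear, it suffices to observe that the generator \eqref{eq_ker1} attached to $(\mr, a, i, n)$ is carried to the generator attached to $(\Phi(\mr), a, i, n)$; this is immediate because $\Phi$ commutes with $a(n)_i$, with $a(-k+n)_0^*$ and with $a(k)_s$, while the coefficients in $\Or^\alg$ are untouched. Therefore $\id \otimes \Phi$ descends to a $\Dr$-module homomorphism $D_\Phi: D_{\Mr} \to D_{\Mr'}$. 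Functoriality ($D$ of an identity tuple is an identity, $D_{\Psi \circ \Phi} = D_\Psi \circ D_\Phi$) follows from the corresponding identities for $\Phi \mapsto \id \otimes \Phi$, since composition in a product category is componentwise and $\id \otimes (\Psi \circ \Phi) = (\id \otimes \Psi) \circ (\id \otimes \Phi)$; the $\C$-linearity in each argument is inherited from multilinearity of the tensor product. I do not expect a genuine obstacle: the only steps requiring a line of argument are the compatibility of $\phi_0^\vee$ with the transposed $V$-action on $M_0^\vee$ --- which is exactly where the $\op$ in the first factor enters --- and the $\Dr$-linearity, which rests solely on each $\phi_i$ commuting with $L(-1) = \om(0)$; everything else is bookkeeping on the generators \eqref{eq_ker1} and formal properties of $\id \otimes (-)$ on tensor products.
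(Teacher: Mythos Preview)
Your proposal is correct and is precisely the routine verification the paper has in mind: the paper itself offers no proof beyond the remark ``The following lemma is clear,'' and you have simply unpacked that clarity by checking that the induced map $\id\otimes\Phi$ is $\Dr$-linear, preserves the generating relations \eqref{eq_ker1}, and behaves functorially and $\C$-linearly.
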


Let $\Or^\an$ be the sheaf of holomorphic functions on $\Xr$.
Set
\begin{align*}
\CB_{\Mr} = \mathrm{Hom}_{\Dr}(D_{\Mr},\Or^\an),
\end{align*}
the holomorphic solution sheaf, which is called {\it a chiral conformal block}.

\begin{rem}
\label{rem_assign_hol}
Let $U\subset \Xr$ be an open subset and $C\in \CB_\Mr(U)$.
Let $\Mr\rightarrow \Or^\alg \otimes \Mr$ be the embedding defined by
$\mr\mapsto 1\otimes \mr$ for $\mr\in \Mr$.
Then, by $\Mr\rightarrow \Or^\alg \otimes \Mr \rightarrow D_{\Mr}$,
$C$ can be regarded as a linear map $\Mr \rightarrow \Or^\an(U)$,
which assigns each vector in $\Mr$ to a holomorphic function on $U$.
\end{rem}

The following result is obtained in \cite{M6} by refining the idea in \cite{H2}:
\begin{prop}
\label{prop_coherence}
Let $M_0,M_1,\dots,M_r\in \Vmodu$.
Assume that $M_1,\dots,M_r$ are $C_1$-cofinite and $M_0^\vee$ is finitely generated.
Then, $D_{\Mr}$ is a finitely generated $\Or^\alg$-module. 
In particular, $D_{\Mr}$ is holonomic on $\Xr$ and the holomorphic solution sheaf $\mathrm{Hom}(D_{\Mr},\Or^\an)$ is a locally constant sheaf of finite rank on $\Xr$.
\end{prop}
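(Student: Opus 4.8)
The plan is to reduce the statement to a finiteness result about a polynomial ring acting on $D_{\Mr}$, and for this the essential input is the $C_1$-cofiniteness of $M_1,\dots,M_r$ together with finite generation of $M_0^\vee$. First I would observe that the relations \eqref{eq_ker1} allow one to move any mode $a(n)_i$ with $a$ of positive weight and $n$ sufficiently negative ``past'' the tensor symbol: namely, for $n\leq -1$ the relation expresses $1\otimes a(n)_i\mr$ modulo $N_{\Mr}$ as a sum of a term involving $a(m)_0^*$ on the dual slot and terms involving $a(k)_s$ with $k\geq 0$ on the other slots, all with coefficients in $\Or^\alg$. Iterating this for a basis of $\bigoplus_{k\geq 1}V_k$ and using that $M_i/C_1(M_i)$ is finite-dimensional, one shows that $D_{\Mr}$ is generated as an $\Or^\alg$-module by the images of $1\otimes (\overline{m}_0\otimes m_1\otimes\cdots\otimes m_r)$ where each $m_i$ runs over a fixed finite set of coset representatives of $M_i/C_1(M_i)$ and $\overline{m}_0$ runs over a finite generating set of $M_0^\vee$; finiteness of the latter uses the hypothesis that $M_0^\vee$ is finitely generated over $V$ together with the fact that the $V$-action can be absorbed into $\Or^\alg$-coefficients via the relations as well. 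This is essentially Huang's ``C_1-cofinite implies coherence'' argument, adapted in \cite{M6} to the $\Dr$-module $D_{\Mr}$, so I would cite \cite[H2]{} and \cite{M6} for the precise bookkeeping.

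Given that $D_{\Mr}$ is a finitely generated $\Or^\alg$-module, the remaining assertions are standard $D$-module theory. Since $\Or^\alg=\C[z_1,\dots,z_r,(z_i-z_j)^\pm]$ is the ring of regular functions on the smooth affine variety $\Xr$ of dimension $r$, and $\Dr$ is its ring of differential operators, a $\Dr$-module that is coherent (indeed finitely generated) over $\Or^\alg$ is automatically $\mathcal{O}$-coherent, hence a vector bundle with flat (integrable) connection on a dense open subset, and in fact on all of $\Xr$ because $\Or^\alg$-coherence of a $\Dr$-module forces local freeness here (the singular support is contained in the zero section, so the module is holonomic with characteristic variety the zero section, which is exactly the condition of being an integrable connection). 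I would invoke the standard equivalence (e.g.\ as in \cite{FB}) between $\mathcal{O}_{\Xr}$-coherent $\mathcal{D}_{\Xr}$-modules and locally constant sheaves of finite-dimensional $\C$-vector spaces, via the analytic de Rham / solution functor: applying $\mathrm{Hom}_{\Dr}(-,\Or^\an)$ to such a module yields a locally constant sheaf whose rank equals the generic rank of $D_{\Mr}$ over $\Or^\alg$, which is finite by the first paragraph.

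Putting these together: $D_{\Mr}$ finitely generated over $\Or^\alg$ $\Rightarrow$ holonomic with zero-section characteristic variety $\Rightarrow$ an integrable connection on $\Xr$ $\Rightarrow$ $\CB_{\Mr}=\mathrm{Hom}_{\Dr}(D_{\Mr},\Or^\an)$ is a locally constant sheaf of finite rank. The main obstacle is the first step — establishing finite generation over $\Or^\alg$ — since this is where the representation-theoretic hypotheses ($C_1$-cofiniteness of $M_1,\dots,M_r$, finite generation of $M_0^\vee$) are genuinely used, and it requires carefully controlling how the mode relations \eqref{eq_ker1} reduce arbitrary modes to the finitely many ``leading'' ones without introducing infinitely many new $\Or^\alg$-generators; the weight grading on $V$ and the $L(0)$-finiteness conditions on the modules are what make the iteration terminate. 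The $D$-module-theoretic second half is formal once this is in hand.
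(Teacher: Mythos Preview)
Your proposal is correct and in fact more detailed than what the paper provides: the paper does not prove this proposition in-text at all but simply imports it from \cite{M6} (with credit to Huang's idea in \cite{H2}), and your sketch accurately reconstructs the argument behind that citation --- use the relations \eqref{eq_ker1} together with $C_1$-cofiniteness of $M_1,\dots,M_r$ and finite generation of $M_0^\vee$ to get finite generation over $\Or^\alg$, then invoke standard $D$-module theory to conclude holonomicity and local constancy of the solution sheaf. There is nothing to compare; you have supplied the outline that the paper outsources.
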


For a locally constant sheaf, we can define the monodromy representation of the fundamental groupoid $\Pi_1(\Xr)$,
which plays an essential role in our construction of an action of $\CPaB$,
which is natural with respect to $M_0,M_1,\dots,M_r \in \Vmodf$.

Let $A\in \Tr_r$.
In Section \ref{sec_model_C},
we introduced $\Dr$-modules 
\begin{align*}
T_A^\conv = \C[[\zeta_e\mid e\in E(A)]]^\conv[z_{A}, x_A^\C,\log x_A,\zeta_e^\C,\log\zeta_e \mid e\in E(A)].
\end{align*}
Since we impose the convergence in $U_A$ on the formal power series and $U_A$ is simply-connected,
any formal solutions
\begin{align*}
\mathrm{Hom}_{\Dr}(D_{\Mr}, T_A^\conv)
\end{align*}
define a well-defined section of $\CB_\Mr(U_A)$.
This gives a linear map
\begin{align*}
s_A:\mathrm{Hom}_{\Dr}(D_{\Mr}, T_A^\conv) \rightarrow \mathrm{Hom}_{\Dr}(D_{\Mr}, \Or^\an(U_A)).
\end{align*}

Conversely, we showed that 
any conformal block has an expansion of the form in $T_A^\conv$.
\begin{thm}\cite[Theorem 4.23]{M6}
\label{thm_expansion}
Let $M_0,M_1,\dots,M_r \in \Vmodf$.
For $A \in \Tr_r$, $s_A$ is isomorphism of vector spaces. The inverse map, which is defined by the series expansion, is denoted by
$$
e_A:\CB_{\Mr}(U_A) \rightarrow \mathrm{Hom}_{\Dr}(D_{\Mr},T_A^\conv).
$$
\end{thm}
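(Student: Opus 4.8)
The plan is to establish injectivity of $s_A$ by a direct linear-independence argument and to deduce surjectivity from the regular-singularity structure of $D_{\Mr}$ in the $A$-coordinate system. For injectivity, evaluating a parenthesized formal power series as a single-valued holomorphic function on $U_A$ (using the fixed branch of $\Log$ from \eqref{eq_log_def}) gives an injection $T_A^\conv\hookrightarrow\Or^\an(U_A)$: the generalized monomials $z_A^n\,x_A^{\rho}(\log x_A)^k\prod_{e\in E(A)}\zeta_e^{\rho_e}(\log\zeta_e)^{k_e}F$ with $F\in\C[[\zeta_e]]$ are linearly independent as functions on the simply connected $U_A$, since one recovers their coefficients by reading off powers of the logarithms and fractional powers of $x_A$ and of the $\zeta_e$ along the boundary loci $\{x_A=0\}$ and $\{\zeta_e=0\}$. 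As $s_A$ is just $\Hom_{\Dr}(D_{\Mr},-)$ applied to this injection, it is injective; and by Proposition \ref{prop_coherence} the target $\CB_{\Mr}(U_A)=\Hom_{\Dr}(D_{\Mr},\Or^\an(U_A))$ is finite dimensional. So the whole content of the theorem is that every chiral conformal block on $U_A$ has a convergent expansion lying in $T_A^\conv$.

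For surjectivity I would transport $D_{\Mr}$ through the biholomorphism $\Psi_A$ of Proposition \ref{prop_psiA} and analyse it near the normal-crossing divisor $\{x_A=0\}\cup\bigcup_{e\in E(A)}\{\zeta_e=0\}$, the variable $z_A$ entering polynomially by global translation covariance. The key algebraic point is that in these coordinates $D_{\Mr}$ extends to a holonomic $\mathcal{D}$-module with \emph{regular singularities} along each $\{\zeta_e=0\}$ and along $\{x_A=0\}$. Concretely, by Proposition \ref{prop_coherence} $D_{\Mr}$ is a finitely generated $\Or^\alg$-module, and $C_1$-cofiniteness of $M_1,\dots,M_r$ together with the defining relations \eqref{eq_ker1} — which are precisely the commutator and iterate formulas for logarithmic intertwining operators, cf. Definition \ref{def_int} — let one rewrite each $a(n)_i m$ with $a\in\bigoplus_{k\ge1}V_k$ as an $\Or^\alg$-combination of terms of strictly smaller total conformal weight. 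Controlling the $\zeta_e$-denominators produced by this rewriting (e.g.\ expanding $(z_i-z_j)^{-1}$ as in \eqref{eq_example_conv}) uses the polynomiality of $\Psi_A^{-1}$ from Proposition \ref{prop_psiA}. This bounds $D_{\Mr}$ by finitely many generators on which the Euler operators $x_A\pa_{x_A}$ and $\zeta_e\pa_{\zeta_e}$ act locally finitely, with eigenvalues read off from conformal weights — exactly the regular-singularity condition.

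Granting this, the standard Frobenius method for regular singular connections on a punctured polydisk applies: over the simply connected $U_A$, every horizontal section of $D_{\Mr}$ is a finite $\C$-linear combination of terms $z_A^n\,x_A^{\rho}(\log x_A)^k\prod_{e}\zeta_e^{\rho_e}(\log\zeta_e)^{k_e}$ times a power series in the $\zeta_e$ that converges on $\prod_e\D_{p_e}$ for every $A$-admissible $\mathfrak{p}$, i.e.\ an element of $T_A^\conv$. Applying this to $C\in\CB_{\Mr}(U_A)$ produces $e_A(C)\colon D_{\Mr}\to T_A^\conv$; its $\Or^\alg$- and $\Dr$-linearity are automatic, since $e_A$ is $C$ composed with the inverse of the ring embedding \eqref{eq_eA_conv}, and $s_A(e_A(C))=C$ by construction. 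Hence $s_A$ is bijective with inverse $e_A$.

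The main obstacle is the regular-singularity claim of the second paragraph: one must verify that the recursion in the $\zeta_e$-directions coming from \eqref{eq_ker1} both terminates at each fixed $\zeta_e$-order (finite generation, no runaway poles) and has coefficients of at most geometric growth, so that the formally constructed expansion genuinely converges on the polydisks attached to $A$-admissible $\mathfrak{p}$. This is exactly where $C_1$-cofiniteness is indispensable and where the convergence estimates of \cite{H2} must be refined; the remainder is formal bookkeeping with the coordinate change $\Psi_A$.
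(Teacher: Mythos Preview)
The paper does not prove this theorem here; it is quoted verbatim from \cite[Theorem 4.23]{M6}, so there is no in-paper argument to compare against beyond the one-line description preceding Proposition~\ref{prop_coherence} that the result is ``obtained in \cite{M6} by refining the idea in \cite{H2}''. Your outline is consistent with that description: Huang's method in \cite{H2} is precisely to use $C_1$-cofiniteness to produce a finite $\Or^\alg$-spanning set for $D_{\Mr}$ on which the scaling operators act with controlled eigenvalues, i.e.\ to exhibit regular singularities, and then to invoke Fuchsian theory. The injectivity paragraph and the reduction of the problem to the regular-singularity claim are correct as written.

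Two points deserve more care than you give them. First, the passage from ``regular singularities along each irreducible component $\{\zeta_e=0\}$, $\{x_A=0\}$'' to ``Frobenius expansion in $T_A^\conv$'' is not a one-variable statement iterated naively: you need either Deligne's theory of regular meromorphic connections along a normal-crossing divisor, or an inductive argument in which you solve in one $\zeta_e$ at a time while keeping uniform control in the remaining variables. The latter is what the tree structure is for --- each edge $e$ corresponds to a factorisation of the configuration, and the recursion respects this --- but you should say so rather than invoke a ``standard'' multivariable Frobenius method. Second, your convergence claim (``coefficients of at most geometric growth'') does not follow from regularity alone in several variables; what actually gives convergence on every $A$-admissible polydisk is that $D_{\Mr}$ is a coherent $\mathcal{D}$-module on all of $\Xr$ (Proposition~\ref{prop_coherence}), so the formal solution is the asymptotic expansion of an honest analytic solution, not merely a formal one. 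You gesture at this but conflate it with the termination of the recursion; they are separate issues.
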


Let us consider the simplest tree $(12) \in \Tr_2$. Note that, in this case,
\begin{align*}
T_{12} = \C[z_2, z_{12}^\C, \log(z_{12})]=T_{12}^\conv
\end{align*}
is just a polynomial and does not contain any infinite series, since the set of all edges $E(12)$ is empty.
Let $M_i \in \Vmodu$ ($i=0,1,2$).
For $I(\bullet,z) \in I_{\log}\binom{M_0}{M_1M_2}$ and $u \in M_0^\vee$, $m_i \in M_i$,
\begin{align*}
\langle u, \exp(L(-1)z_2) I(m_1,z_{12})m_2 \rangle \in \C[z_2, z_{12}^\C, \log(z_{12})] = T_{12}^\conv
\end{align*}
and it is easy to show that this is an element of $\mathrm{Hom}_{D_{X_2(\C)}}(D_{M_{[0;2]}},T_{(12)}^\conv)$.
Then, we have (see \cite[Proposition 5.7]{M6}):
\begin{prop}\label{prop_int}
The above map $I_{\log}\binom{M_0}{M_1M_2} \rightarrow \CB_{M_{[0;2]}}(U_{12})$
is isomorphism.
\end{prop}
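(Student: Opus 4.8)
The plan is to establish the isomorphism $I_{\log}\binom{M_0}{M_1M_2} \xrightarrow{\sim} \CB_{M_{[0;2]}}(U_{12})$ by exhibiting explicit mutually inverse maps and checking they are well-defined. First I would fix notation: since $E(12) = \emptyset$ and $V(12)$ has the single vertex whose associated variable is $z_{12}$, while $r_A = 2$ gives $z_A = z_2$, we have $T_{12}^\conv = T_{12} = \C[z_2][z_{12}^\C, \log z_{12}]$, a polynomial ring with no power-series directions. The forward map sends $I(\bullet,z)$ to the $\Dr$-module homomorphism $D_{M_{[0;2]}} \to T_{12}^\conv$ determined by $m_0^\vee \otimes m_1 \otimes m_2 \mapsto \langle u, \exp(L(-1)z_2)\, I(m_1, z_{12}) m_2\rangle$, composed with $s_{(12)}$ of Theorem \ref{thm_expansion} to land in $\CB_{M_{[0;2]}}(U_{12})$.

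The key steps, in order. Step 1: show the assignment $m_0^\vee\otimes m_1\otimes m_2 \mapsto \langle u, \exp(L(-1)z_2) I(m_1,z_{12})m_2\rangle$ really lands in $T_{12}^\conv$ and kills the submodule $N_{M_{[0;2]}}$, i.e. descends to a $\Dr$-module map out of $D_{M_{[0;2]}}$. The convergence/membership is essentially the statement (I1) in Definition \ref{def_int} together with the observation that $\exp(L(-1)z_2)$ contributes only polynomial dependence on $z_2$ since $L(-1)$ acts locally nilpotently on the relevant matrix coefficients (by the grading assumptions on modules). The vanishing on the generators \eqref{eq_ker1} is exactly the content of the intertwining relations (I3) together with the translation property (I2): the generator for index $i=1$ is the rewritten form of the commutator formula for $[a(n), I(m_1,z)]$ after conjugating by $\exp(L(-1)z_2)$ and moving $a(n)$ onto $M_0^\vee$, and the generator for $i=2$ is the other half of (I3); compatibility with $\pa_i$ via \eqref{eq_D_def} follows from (I2) and $[L(-1), \exp(L(-1)z_2)Y(\cdot)] $. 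Step 2: construct the inverse. Given $C \in \CB_{M_{[0;2]}}(U_{12})$, use $e_{(12)}$ from Theorem \ref{thm_expansion} to get a $\Dr$-homomorphism $D_{M_{[0;2]}} \to T_{12}^\conv$, and read off a map $M_1 \times M_2 \to M_0[[z]][z^\C,\log z]$, $\langle u, I(m_1,z)m_2\rangle := $ (the value at $m_0^\vee\otimes m_1\otimes m_2$, with $\exp(L(-1)z_2)$ stripped by setting $z_2$-dependence appropriately, equivalently specializing the $\Dr$-action). Step 3: verify this $I$ satisfies (I1)--(I3) — again each axiom is forced by the corresponding generator of $N_{M_{[0;2]}}$ lying in the kernel, plus the $\Dr$-equivariance for (I2). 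Step 4: check the two constructions are mutually inverse, which is formal once both directions are known to be well-defined, since on both sides everything is determined by the "bare" generating series in the single variable $z_{12}$.

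I expect the main obstacle to be Step 1 (equivalently Step 3 in the reverse direction): the careful bookkeeping that the defining relations \eqref{eq_ker1} of $N_{M_{[0;2]}}$, which are written in the "all insertion points nonzero, dual vector absorbing one action" normalization, match precisely the intertwining operator axioms (I3) after the conjugation by $\exp(L(-1)z_2)$ and the identification $z_0 = z_{12}$, $z_A = z_2$. This is a direct but fiddly computation with binomial identities and the commutator $[L(-1), a(n)] = -n\,a(n-1)$; the role of $\exp(L(-1)z_2)$ is exactly to implement the translation that turns "$a$ inserted at $0$ acting on $M_0^\vee$" into "$a$ inserted at $z_2$", matching the geometric picture in the introduction. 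Once this correspondence of relations is pinned down, well-definedness of both maps and the fact that they are inverse to each other are immediate, and the isomorphism follows; note that the target is a priori a sheaf on $U_{12}$, but since $T_{12}^\conv$ carries no nontrivial power series directions the space $\CB_{M_{[0;2]}}(U_{12}) \cong \Hom_{\Dr}(D_{M_{[0;2]}}, T_{12}^\conv)$ by Theorem \ref{thm_expansion}, so there is no global-sections subtlety beyond what that theorem already provides.
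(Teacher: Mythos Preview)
Your approach is correct and is essentially the standard one; the paper itself does not give a self-contained proof but defers to \cite[Proposition 5.6]{M6}, where the argument is exactly of the type you sketch: identify the relations \eqref{eq_ker1} generating $N_{M_{[0;2]}}$ with the intertwining-operator axioms (I2)--(I3) after conjugation by $\exp(L(-1)z_2)$, and observe that in the two-point case $T_{12}^\conv = T_{12}$ has no power-series variables.

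One small caveat: you invoke Theorem~\ref{thm_expansion} for the isomorphism $\CB_{M_{[0;2]}}(U_{12}) \cong \Hom_{\Dr}(D_{M_{[0;2]}}, T_{12}^\conv)$, but that theorem is stated under the hypothesis $M_i \in \Vmodf$, whereas the proposition here is asserted for arbitrary $M_i \in \Vmodu$. In the two-point case this is not a genuine obstacle: one can argue directly that any $\Dr$-equivariant map $D_{M_{[0;2]}} \to \Or^\an(U_{12})$ already lands in $T_{12}$, since the $\pa_2$-equivariance together with the lower-boundedness of the grading forces polynomial dependence on $z_2$, and the $L(0)$-relation in $N_{M_{[0;2]}}$ forces the $z_{12}$-dependence to lie in $\C[z_{12}^\C,\log z_{12}]$ componentwise. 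You should make this direct argument explicit rather than citing Theorem~\ref{thm_expansion}, or else add the $\Vmodf$ hypothesis.
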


Let $A\in \Tr_r$, $B\in \Tr_{s}$ and $p\in [r]$. Then, $A\circ_p B\in \Tr_{r+s-1}$.
Let $M_0, M_1^A,\dots,M_r^A$ and $M_1^B,\dots,M_{s}^B$ be $V$-modules in $\Vmodf$.
Set
\begin{align*}
M_{r\circ_p s}^{A,B}= M_0^\vee \otimes  M_1^A\otimes M_2^A \otimes \cdots \otimes 
M_{p-1}^A \otimes M_1^B\otimes \cdots \otimes M_s^B \otimes M_{p+1}^A \otimes \cdots \otimes M_r^A
\end{align*}
and let
\begin{align*}
C_A \in \CB_{M_0;M_1^A,\dots,M_r^A}(U_A) \text{ and } C_B \in \CB_{M_p^A;M_1^B,\dots,M_s^B}(U_B).
\end{align*}
We finally recall the operadic composition (the glueing of solutions) of $C_A$ and $C_B$,
which defines a new conformal block
\begin{align*}
C_A \circ_p C_B \in \CB_{M_{r\circ_p s}^{A,B}}(U_{A \circ_pB})
\end{align*}
(see \cite[Section 5.3]{M6}).

By Remark \ref{rem_assign_hol} and Theorem \ref{thm_expansion}, we regard the conformal blocks as
the (convergent) formal power series valued linear map on the tensor product of modules.
For $m_{r\circ_p s}^{A,B}= u \otimes  m_1^A\otimes m_2^A \otimes \cdots \otimes 
m_{p-1}^A \otimes m_1^B\otimes \cdots \otimes m_s^B \otimes m_{p+1}^A \otimes \cdots \otimes m_r^A \in M_{r\circ_p s}^{A,B}$,
define the operadic composition by
\begin{align}
&(C_A \circ_p C_B)(m_{[r \circ_p r']})\label{eq_comp}\\
&= \sum_{h \in \C} \Bigl(\sum_{i \in I_h} 
e_A(C_A)(u,m_1^A,\dots,m_{p-1}^A,e_i^h, m_{p+1}^A,\dots,m_r^A)
e_B(C_B)(\exp(-L(-1)^*z_{r_B})e_h^i,m_1^B,\dots,m_s^B)\Bigr) \nonumber\\
& \in \C[[\zeta_e\mid e\in E(A\circ_p B)]] [z_{A\circ_p B}, x_{A\circ_p B}^\C,\log x_{A \circ_p B},\zeta_e^\C,\log\zeta_e \mid e\in E(A\circ_p B)].\nonumber
\end{align}
Here, $\{e_i^h\}_{i \in I_h}$ is a basis of $(M_p^A)_h$ and $\{e_h^i\}_{i \in I_h}$ is the dual basis of $(M_p^A)_h^*$.
This infinite sum is well-defined as formal power series, i.e., each coefficient of formal variables is a finite sum.
It is non-trivial that the right-hand-side of \eqref{eq_comp} is in $T_{A\circ_p B}^\conv$,
that is, absolutely convergent in $U_{A\circ_p B}$.
This result is obtained in \cite[Corollary 5.12]{M6}.
Hence, \eqref{eq_comp} gives a section of 
$\mathrm{Hom}_{D_{X_{r+s}}}(D_{M_{r\circ_p s}^{A,B}}, T_{A\circ_pB}^\conv)$,
and thus, $\CB_{M_{r\circ_p s}^{A,B}}(U_{A\circ_p B})$.

To summarize, we have the following result:
\begin{thm}
\label{thm_glue}
The following sum is locally uniformly convergent in $U_{A\circ_p B}$
\begin{align*}
\sum_{h \in \C} \Bigl|\sum_{i \in I_h} 
e_A(C_A)(u,m_1^A,\dots,m_{p-1}^A,e_i^h, m_{p+1}^A,\dots,m_r^A)
e_B(C_B)(\exp(-L(-1)^*z_{r_B})e_h^i,m_1^B,\dots,m_s^B)\Bigr|,
\end{align*}
where the absolute values are taken for the sum over the conformal weights $h\in\C$.
In particular, \eqref{eq_comp} defines the linear map:
\begin{align}
\comp_{p}:\CB_{M_0;M_1^A,\dots,M_r^A}(U_A)\otimes\CB_{M_p^A;M_1^B,\dots,M_s^B}(U_B)
\rightarrow \CB_{M_{r\circ_p s}^{A,B}}(U_{A\circ_p B}).\label{eq_glue}
\end{align}
\end{thm}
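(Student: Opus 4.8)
The plan is to treat the series \eqref{eq_comp} in two stages: a purely \emph{formal} stage, which is routine bookkeeping, and an \emph{analytic} (convergence) stage, which carries all the real content. First, using Theorem \ref{thm_expansion} applied to the trees $A$ and $B$, I would regard $C_A$ and $C_B$ as $\Dr$-module homomorphisms $e_A(C_A)\colon D_{M_0;M_1^A,\dots,M_r^A}\to T_A^\conv$ and $e_B(C_B)\colon D_{M_p^A;M_1^B,\dots,M_s^B}\to T_B^\conv$. The first step is to check that the right-hand side of \eqref{eq_comp} is a well-defined formal power series in $T_{A\circ_p B}$: the inner sum over $i\in I_h$ is finite since $\dim(M_p^A)_h<\infty$, and for each fixed monomial in the formal variables only finitely many conformal weights $h$ contribute, because the exponent of the gluing variable is governed by $h$ and all modules involved are lower-truncated within each of their finitely many weight-congruence classes (the standing hypotheses on $V$-modules). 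The second step is to verify that $C_A\circ_p C_B$ is a $\Dr$-module homomorphism $D_{M_{r\circ_p s}^{A,B}}\to T_{A\circ_p B}$, i.e.\ that the generators \eqref{eq_ker1} of $N_{M_{r\circ_p s}^{A,B}}$ are annihilated. By bilinearity and the operad structure on trees this reduces to the defining relations already satisfied by $e_A(C_A)$ and $e_B(C_B)$; the essential manipulation is the completeness identity $\sum_{h}\sum_{i\in I_h}e_i^h\,\langle e_h^i,-\rangle=\id_{M_p^A}$, which transfers an action $a(n)$ routed through slot $p$ of $A$ to the $0$-th (output) slot of $B$, together with the translation identity encoded in the factor $\exp(-L(-1)^*z_{r_B})$ that reconciles the two coordinate conventions at the glued leaf. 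Both steps are formal computations.

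The analytic heart is to show that this formal $\Dr$-homomorphism actually takes values in $T_{A\circ_p B}^\conv$, equivalently that \eqref{eq_comp} converges absolutely and locally uniformly on $U_{A\circ_p B}$ in the stated order of summation: finite inner sums over $i\in I_h$, then absolute value, then the sum over $h$. Here I would invoke Proposition \ref{prop_coherence}: the $\Dr$-modules attached to the data of $A$, of $B$, and of $A\circ_p B$ are all finitely generated over $\Or^\alg$, hence holonomic, and finite generation furnishes, on any compact subdomain of $U_A$ (resp.\ $U_B$), estimates of the form $\mathrm{poly}(h)\,\rho^{h}$ for the coefficient obtained by pairing $e_A(C_A)$ with a conformal-weight-$h$ vector inserted in slot $p$, and of the form $\mathrm{poly}(h)\,\sigma^{-h}$ for the coefficient obtained by pairing $e_B(C_B)$ with $\exp(-L(-1)^*z_{r_B})e_h^i$, the latter carrying the leading factor $z_{r_B}^h$. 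On $U_{A\circ_p B}$ the new gluing edge contributes the variable $\zeta_{e_0}$, which measures the ratio of the scale of the $B$-configuration to that of the $A$-configuration at leaf $p$, and this ratio is small there; multiplying the two coefficient families, summing over the at most $\mathrm{poly}(h)$ indices $i\in I_h$, and shrinking to a suitable compact subdomain of $U_{A\circ_p B}$, one obtains a geometric-type majorant $\sum_{h}\mathrm{poly}(h)\,q^{h}$ with $q<1$, which converges and dominates \eqref{eq_comp} locally uniformly. Conceptually this expresses that a formal solution of the regular-singular holonomic system $D_{M_{r\circ_p s}^{A,B}}$ whose exponents and logarithmic degrees are governed by the conformal weights must converge; the estimate is the concrete incarnation, and this is exactly \cite[Theorem 5.11]{M6}.

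Granting convergence, $C_A\circ_p C_B$ lies in $\mathrm{Hom}_{\Dr}(D_{M_{r\circ_p s}^{A,B}},T_{A\circ_p B}^\conv)$, so applying the isomorphism $s_{A\circ_p B}$ of Theorem \ref{thm_expansion} produces a section of $\CB_{M_{r\circ_p s}^{A,B}}(U_{A\circ_p B})$, and since \eqref{eq_comp} is bilinear in $C_A$ and $C_B$ this yields the linear map \eqref{eq_glue}. I expect the main obstacle to be precisely the coefficient estimates of the second paragraph: the formal and operadic part is mechanical, whereas controlling the expansion coefficients uniformly across the infinitely many conformal weights is where $C_1$-cofiniteness is indispensable, entering through the coherence and finite-generation statement of Proposition \ref{prop_coherence}.
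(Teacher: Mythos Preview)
The paper does not prove this theorem; it is quoted from \cite[Theorem 5.11]{M6}, as the paragraph immediately preceding the statement makes explicit, and you yourself note this at the end of your second paragraph. Your two-stage outline (formal $\Dr$-homomorphism, then convergence via holonomicity from Proposition~\ref{prop_coherence}) is the correct architecture and matches what the cited result does.

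One point where your sketch is imprecise: the estimates ``$\mathrm{poly}(h)\,\rho^h$'' and ``$\mathrm{poly}(h)\,\sigma^{-h}$'' on the individual factors do not follow directly from finite generation of $D_{M_{[0;r]}}$ over $\Or^\alg$ in the way you suggest. The cleaner route, and the one actually taken in \cite{M6}, is to first verify that $C_A\circ_p C_B$ is a formal $\Dr$-homomorphism into $T_{A\circ_p B}$, then invoke holonomicity and regular singularity of $D_{M_{r\circ_p s}^{A,B}}$ (Proposition~\ref{prop_coherence} applied to the \emph{composed} module list) to conclude that every formal solution lies in $T_{A\circ_p B}^\conv$; the Frobenius-type recursion for coefficients of a regular holonomic system gives the polynomial-times-geometric majorant you want, and this is what delivers the absolute convergence in $h$. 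Your separate-estimates-then-multiply picture can be made to work, but the bounds come from the regular-singular structure of the glued system rather than from finite generation of the two unglued $D$-modules taken in isolation.
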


\subsection{Homotopy action of little 2-disk operad}
\label{sec_CPaB}

Let $r\geq 1$ and set
\begin{align*}
X_r(\R)=\{(z_1,\dots,z_r)\in \R^r\mid z_i \neq z_j \text{ for any }i\neq j\}.
\end{align*}
\begin{dfn}
\label{def_order_Q}
Let $A\in \Tr_r$ and $Q \in X_r(\R)$.
The order of $Q$ and $A$ is said to be equal if the following conditions are satisfied:
For any $i,j\in\{1,\dots,r\}$ with $i\neq j$,
$z_i <z_j$ if and only if the leaf $i$ is to the right of the leaf $j$ in $A$.
\end{dfn}

Let $Q:\Tr_r\rightarrow X_r(\R) \subset \Xr$ satisfy
the following conditions:
\begin{enumerate}
\item[Q1)]
The order of $Q(A)$ and $A$ is equal,
\item[Q2)]
$Q(A) \in U_A$ for $A\in \Tr_r$.
\end{enumerate}


Recall that the maps of $\CPaB(r)$ are defined using the pure braid group $PB_r$.
The pure braid group can be regarded as the homotopy classes of paths in the complex plane from different $n$ points to different $n$ points on the real line $\R$. Therefore, since the order is correct, for $g \in \mathrm{Hom}_\CPaB(r)(A,A')$, a corresponding path $\ga(g):[0,1]\rightarrow \Xr$ with $\ga(g)(0)=Q(A)$ and $\ga(g)(1)=Q(A')$ can be defined up to homotopy. The homotopy class of this path is written as $[g]_Q$.

\begin{rem}
For example, for $((12)3)(45)$, we can consider $Q=(z_1,z_2,z_3,z_4,z_5)\in X_5(\R)$ as in the figure below.
\end{rem}

\begin{tikzpicture}[baseline=(current bounding box.center)] 
\tikzstyle point=[circle, fill=black, inner sep=0.05cm]
 \node[point, label=below:$(5$] at (0.5,0.5) {};
 \node[point, label=below:$4)$] at (1,0.5) {};
 \node[point, label=below:$(3$] at (3,0.5) {};
 \node[point, label=below:$(2$] at (4,0.5) {};
 \node[point, label=below:$1))$] at (4.5,0.5) {};
\end{tikzpicture}

Define a functor $\CB_Q:\CPaB \rightarrow \Endp_\Vmodf$ as follows:
For an object $A\in \Tr_r$ with $r\geq 1$,
\begin{align*}
\CB_Q(A):\Vmodf \times (\Vmodfo)^r \rightarrow \Vect,\quad \Mr \mapsto \CB_\Mr(U_A).
\end{align*}
For $r=0$, recall $\Tr_0$ consists of the empty word $\emptyset$.
Define $\CB(\emptyset) \in \Endp_\Vmodf(0)=\Func(\Vmodf,\Vect)$ by
\begin{align*}
\CB_Q(\emptyset)=\mathrm{Hom}(V,\bullet): \Vmodf \rightarrow \Vect,\quad
M\mapsto \mathrm{Hom}(V,M).
\end{align*}

For a morphism $g:A\rightarrow A'$ in $\CPaB(r)$,
let $\ga \in [g]_Q$
and
define a linear map $A_\ga(Q):\CB_{\Mr}(U_A)\rightarrow \CB_{\Mr}(U_{A'})$
by the analytic continuation along the path $\ga$,
which is natural with respect to $V$-module homomorphisms $f_i: N_i \rightarrow M_i$ and $g:M_0\rightarrow N_0$  (see \cite{M6}).
Since $\CB$ is a locally constant sheaf, $A_\ga$ is well-defined and
independent of the choice of $\ga\in [g]_Q$.
Moreover, if $Q_0,Q_1:\Tr_r\rightarrow \Xr$ satisfy (Q1) and (Q2), then $\CB_{Q_0}=\CB_{Q_1}$ as functors.
Thus, we simply denote $\CB_Q$ by $\CB$, which we call a chiral conformal block.

Now we have: 
\begin{thm}\cite[Proposition 6.17 and Proposition 6.18]{M6}
\label{thm_glue}
Let \(r\ge 1\), \(s\ge 0\), $p \in \{1,\dots,r\}$,
\(A,A'\in \Tr_r\), \(B,B'\in \Tr_s\),
and let \(g_A:A\to A'\), \(g_B:B\to B'\) be morphisms in \(\CPaB\).
Then, the following diagram commutes:
\begin{align}
\begin{split}
\begin{array}{ccc}
\CB_{\Mr}(U_A) \otimes \CB_{\Ms}(U_B)
      &\overset{\comp_{p}}{\longrightarrow}&
\CB_{M_{[r+s-1]}^{A,B}}(U_{A\circ_p B})
    \\
    {}^{{\rho(g_A)\otimes \rho(g_B)}}\downarrow 
      && 
    \downarrow^{{\rho(g_A\circ_p g_B)}}
    \\
\CB_{\Mr}(U_{A'}) \otimes \CB_{\Ms}(U_{B'})
      &\overset{\comp_{p}}{\longrightarrow}&
\CB_{M_{[r+s-1]}^{A',B'}}(U_{A'\circ_p B'})
\label{eq_lax_diagram_lem}
\end{array}
\end{split}
\end{align}
\end{thm}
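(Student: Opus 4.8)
The plan is to recognise the glueing $\comp_p$ of \eqref{eq_glue} as the restriction of a morphism of \emph{locally constant} sheaves defined on the locus of configurations in which the points attached to the leaves of $B$ form a small cluster occupying the $p$-th slot of the configuration attached to $A$. Such a morphism automatically intertwines the monodromy actions of the pertinent fundamental groupoids, so the commutativity of \eqref{eq_lax_diagram_lem} reduces to the fact that the operadic composition $g_A\circ_p g_B$ in $\CPaB$ is geometrically realised by a path that stays in this clustered locus and whose two projections are $\gamma_A$ and $\gamma_B$. First, since $\rho$ is $\C$-linear in its morphism argument and $\comp_p$ is $\C$-bilinear, it suffices to treat $g_A\in PB_n$ and $g_B\in PB_m$ honest braids; fix representative paths $\gamma_A\colon[0,1]\to X_n(\C)$ from $Q(A)$ to $Q(A')$ and $\gamma_B\colon[0,1]\to X_m(\C)$ from $Q(B)$ to $Q(B')$, and recall from Proposition \ref{prop_coherence} that the conformal blocks involved are locally constant of finite rank, so that $\rho(g_A)$ and $\rho(g_B)$ are analytic continuation along $\gamma_A$, $\gamma_B$.

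Next I would make the clustered locus precise. Let $W\subset X_{n+m-1}(\C)$ be the open set of configurations for which the $m$ coordinates lying in the slots of $B$ are contained in a disc $D$ whose diameter is much smaller than its distance to the remaining $n-1$ points, and let $\pi\colon W\to X_n(\C)\times X_m(\C)$ be the holomorphic ``ungluing'' map sending such a configuration to its outer configuration (the $n-1$ far points together with the centre of $D$) and its inner configuration (the $m$ clustered points, recentred at $0$ and rescaled to unit diameter). In \eqref{eq_comp} each summand is, by construction, the product of $e_A(C_A)(u,\dots,e_i^h,\dots)$, a holomorphic function of the outer variables, with $e_B(C_B)(\exp(-L(-1)^{*}z_{r_B})e_h^i,\dots)$, a holomorphic function of the inner variables; the two ring inclusions $T_A^{\conv}\hookrightarrow T_{A\circ_p B}^{\conv}$ and $T_B^{\conv}\hookrightarrow T_{A\circ_p B}^{\conv}$ used to make sense of \eqref{eq_comp} are precisely the pullbacks along the two components of $\pi$, and the expansion isomorphisms are those of Theorem \ref{thm_expansion}. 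The local uniform convergence of the sum \eqref{eq_comp} — which on $U_{A\circ_p B}$ is the convergence statement established just above, and which persists on all of $W$ because admissibility only requires the cluster to be sufficiently small — then shows that $\comp_p$ underlies a morphism of locally constant sheaves on $W$, natural in the module arguments,
\[
\comp_p\colon \pi^{*}\bigl(\CB_{\Mr}\boxtimes\CB_{\Ms}\bigr)\big|_{W}\;\longrightarrow\;\CB_{M_{[r+s-1]}^{A,B}}\big|_{W},
\]
where the $\boxtimes$ is taken with the module $M_p^A$ (the $p$-th input of the first factor, the output of the second) contracted as in \eqref{eq_comp}, and whose restriction to a simply connected piece of $W$ inside $U_{A\circ_p B}$ recovers \eqref{eq_glue}.

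Finally, choose the base configurations $Q$ operadically, so that $Q(A\circ_p B)\in W$ with $\pi(Q(A\circ_p B))=(Q(A),Q(B))$ and likewise for $A'\circ_p B'$; this is legitimate since the functor $\CB_Q$ does not depend on the choice of $Q$. The operadic composition $g_A\circ_p g_B\in\Hom_{\CPaB(n+m-1)}(A\circ_p B,\,A'\circ_p B')$ — replace the $p$-th strand of $g_A$ by a thin, rigidly carried cable braided internally according to $g_B$ — is realised by a path $\Gamma\colon[0,1]\to W$ from $Q(A\circ_p B)$ to $Q(A'\circ_p B')$ with $\Gamma\in[g_A\circ_p g_B]_Q$ and $\pi\circ\Gamma$ homotopic, rel endpoints, to $(\gamma_A,\gamma_B)$ (the translation and rescaling carried by the outer motion do not alter the homotopy class of the inner path). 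Since $\comp_p$ is a morphism of locally constant sheaves over $W$, analytic continuation of $\comp_p(C_A,C_B)$ along $\Gamma$ equals $\comp_p$ applied to the analytic continuations of $C_A$ along $\gamma_A$ and of $C_B$ along $\gamma_B$, that is, $\comp_p(\rho(g_A)C_A,\rho(g_B)C_B)$; this is exactly the commutativity of \eqref{eq_lax_diagram_lem}.

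I expect the main obstacle to be the second step: proving that \eqref{eq_comp} converges locally uniformly on the \emph{whole} clustered locus $W$, so that $\comp_p$ is genuinely a morphism of sheaves on $W$ rather than merely a section-level operation between the two fixed simply connected charts $U_{A\circ_p B}$ and $U_{A'\circ_p B'}$; and checking that the reindexing in \eqref{eq_comp} is compatible with restriction along $\pi$, together with the branch bookkeeping — the conventions for $\log x_{A\circ_p B}$ and $\log\zeta_e$ pulled through the ungluing $\pi$ — needed to guarantee $\pi\circ\Gamma\simeq(\gamma_A,\gamma_B)$ on the nose and not merely up to monodromy. Once $\comp_p$ is known to be a morphism of locally constant sheaves over $W$, the remainder is formal: morphisms of local systems commute with monodromy, and the operad structure of $\CPaB$ is by design the combinatorics of such clustered paths.
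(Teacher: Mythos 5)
First, a point of record: the paper does not prove this statement at all — it is imported verbatim from \cite[Propositions 6.19 and 6.20]{M6} — so there is no in-paper argument to compare yours against line by line; your proposal has to stand on its own.

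As it stands it does not, because its central step is asserted rather than proved. Your whole strategy hinges on upgrading $\comp_p$ from a section-level operation $\CB_{\Mr}(U_A)\otimes\CB_{\Ms}(U_B)\to\CB(U_{A\circ_pB})$ to a morphism of \emph{locally constant sheaves} over the entire clustered locus $W$, after which ``morphisms of local systems commute with monodromy'' does the rest. But the only convergence input available (the recalled \cite[Theorem 5.11]{M6}, i.e.\ the locally uniform convergence of \eqref{eq_comp}) is a statement on $U_{A\circ_pB}$ only, where the outer configuration already sits in the polydisc region dictated by $A$ and the inner one in that dictated by $B$. On a general point of $W$ — outer points in arbitrary position, cluster small — the expansions $e_A(C_A)$, $e_B(C_B)$ used to even write down \eqref{eq_comp} need not converge, so the sum must be rewritten with analytically continued branches of $C_A$, $C_B$, and the locally uniform convergence of $\sum_h\sum_i C_A(\dots,e_i^h,\dots)C_B(e_h^i,\dots)$ along (a neighbourhood of) the cabled path $\Gamma$ is a strictly stronger claim than anything quoted in this paper. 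Your parenthetical justification — ``admissibility only requires the cluster to be sufficiently small'' — is exactly the point at issue, not a proof of it; indeed you yourself flag it as ``the main obstacle'' and then proceed as if it were settled (``Once $\comp_p$ is known to be a morphism of locally constant sheaves over $W$, the remainder is formal''). Without this lemma the sheaf morphism on $W$ does not exist and the monodromy-intertwining argument, the realization of $g_A\circ_p g_B$ by a path inside $W$ with projections $\gamma_A,\gamma_B$, and the rearrangement identifying the continued sum with $\comp_p(\rho(g_A)C_A,\rho(g_B)C_B)$ all hang in the air. (A secondary looseness: the identification of the summands in \eqref{eq_comp} as a product of ``a function of the outer variables'' and ``a function of the inner variables'' via your ungluing map $\pi$ ignores the rescaling built into the $A\circ_pB$-coordinates — the inner variables enter only through $\zeta$-ratios against the outer scale, and the factor $\exp(-L(-1)^*z_{r_B})$ shifts the inner block — so even the compatibility of \eqref{eq_comp} with $\pi$ and the branch conventions, which you defer, needs genuine checking.) The geometric skeleton (cabling realizes $\circ_p$, local-system naturality gives the square) is the right picture, but the proof of this theorem is precisely the analytic work you have postponed, and it is carried out in \cite{M6}, not recoverable from the results restated here.
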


To formulate the above theorem, in terms of operads, we recall the notion of proendomorphism operad from \cite{M6}.
Let $C$ be a small $\C$-linear category. Then, the proendomorphism operad is the sequence
\begin{align}
\Endp_C (n)= \Func (C \times (C^\op)^r, \Vect)\label{eq_proend}
\end{align}
with the composition is defined by the coend of functors, $\int_C$ (for the definition and properties of coends see \cite{Mac}),
\begin{align}
F \circ_p G = \int_{N\in C} F(\bullet_0;\bullet_1,\dots,\bullet_{p-1},N, \bullet_{P+1},\dots,\bullet_n)\otimes G(N;\bullet_{n},\dots,\bullet_{n+m-1}) \in \Endp(n+m-1).
\label{eq_proend_comp}
\end{align}
for $F \in \Endp_C(n)$ and $G \in \Endp_C(m)$ with $p\in [n]$.
Then, $\{\Endp_C (n)\}_{n\geq 0}$ is a 2-operad with the associative isomorphism is defined by the universality of the coend (for more detail see \cite[Section 2.3]{M6}).

A conformal block is a functor
\begin{align*}
\CB_{\bullet_0;\bullet_1,\dots,\bullet_r}(U_A):\Vmodf \times (\Vmodfo)^r \rightarrow \Vect
\end{align*}
for $A\in \Tr_{[r]}$.
Hence, we can define a functor
\begin{align}
\CB:\CPaB(n) &\rightarrow \Endp(n)\\
A \mapsto \CB_{\bullet_0;\bullet_{[r]}}(U_A)\quad&\text{ and }\quad \ga:A \rightarrow A'  \mapsto A(\ga)
\end{align}
and, by Theorem \ref{thm_glue} and the universality of the coend \cite{Mac},
we have a linear map:
\begin{align}
\comp_{p}:\int_{N \in \Vmodf}
\CB_{\Mr}(U_A) \otimes \CB_{\Ms}(U_B)
      {\longrightarrow}
\CB_{M_{[r+s-1]}^{A,B}}(U_{A\circ_p B}).
\label{eq_glue}
\end{align}
Thus, we have:
\begin{thm}\cite[Theorem 6.19]{M6}
\label{thm_action}
Let $V$ be a positive graded vertex operator algebra.
Then, the pair of the functors $\CB:\CPaB \rightarrow \Endp_\Vmodf$
and the natural transformations $\comp_{p}: \CB_{A}\circ_p \CB_{B}\rightarrow \CB_{A \circ_p B}$ ($A,B\in \Tr_{*}$) is a lax 2-morphism of 2-operads.
\end{thm}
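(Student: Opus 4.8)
The plan is to unpack the assertion ``$(\CB,\comp_p)$ is a lax $2$-morphism of $2$-operads'' into three tasks and dispatch each using the results recalled above: (i) for every $n\geq 0$, show that $\CB$ is a functor $\CPaB(n)\to\Endp_\Vmodf(n)$; (ii) show that the $\comp_p$ are natural transformations $\CB(A)\circ_p\CB(B)\Rightarrow\CB(A\circ_p B)$; and (iii) verify the coherence axioms of a lax morphism --- associativity of the $\comp_p$, the unit axiom, and $S_n$-equivariance.

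For (i) the assignment on objects is already in hand: a tree $A\in\Tr_r$ (the object set of $\CPaB(r)$) goes to the functor $\Mr\mapsto\CB_\Mr(U_A)$, and this lands in $\Endp_\Vmodf(r)=\Func(\Vmodf\times(\Vmodfo)^r,\Vect)$ because Proposition \ref{prop_coherence} makes $\CB_\Mr(U_A)$ a finite-rank local system and makes the assignment functorial in the modules. On morphisms I would fix a system of basepoints $Q:\Tr_r\to\Xr$ satisfying (Q1)--(Q2) and send $g:A\to A'$ to the analytic-continuation map $A_\ga(Q):\CB_\Mr(U_A)\to\CB_\Mr(U_{A'})$ along $\ga\in[g]_Q$. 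Local constancy (Proposition \ref{prop_coherence}) makes $A_\ga(Q)$ depend only on $[g]_Q$, hence only on $g$; concatenation of paths yields functoriality; and since any two admissible $Q$ are connected by an admissible path, the resulting functor does not depend on $Q$. Naturality in the module variables is built into the definition of $A_\ga$.

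For (ii), naturality of $\comp_p$ in the $V$-module maps was established when $\comp_p$ was constructed --- it is exactly what makes the coend in the source of the gluing map well-defined via the universal property \cite{Mac} --- while naturality with respect to $\CPaB$-morphisms is precisely the commuting square of Theorem \ref{thm_glue}. Together these say that $\comp_p$ is a morphism of the functors $\CB(A)\circ_p\CB(B)$ and $\CB(A\circ_p B)$.

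The real content is (iii), and I expect associativity to be the main obstacle. Given $A\in\Tr_n$, $B\in\Tr_m$, $C\in\Tr_l$ and composition indices, I would expand all sections in the parenthesized power-series ring $T^\conv$ via $e_A$ (Theorem \ref{thm_expansion}); each application of $\comp$ then inserts a sum over conformal weights together with a dual-basis pairing as in \eqref{eq_comp}, and the two ways of associating three gluings differ only in the order in which a nested double sum over weights is performed. The crux is that Theorem \ref{thm_glue} guarantees this sum is locally uniformly absolutely convergent, so Fubini permits the interchange and the two sections agree on $U_{(A\circ_p B)\circ_q C}$ up to the coend associativity isomorphism of $\Endp_\Vmodf$. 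The unit axiom is easier: with $\CB(\emptyset)=\Hom(V,\bullet)$ and axioms (V2)--(V3), gluing in the vacuum implements the ``erase a strand/point'' operation and acts as the identity. Equivariance holds because $S_n$ acts on $\CPaB(n)$ by relabeling trees (trivially on morphisms) and on $\Endp_\Vmodf(n)$ by permuting the module slots, and $U_{\sigma A}$ together with the conformal-block functor transforms compatibly; checking this is routine bookkeeping. Thus the only genuinely delicate point is the rearrangement of the nested weight-sums in the associativity axiom, which rests squarely on the absolute convergence supplied by \cite[Theorem 5.11]{M6} (recalled as Theorem \ref{thm_glue}).
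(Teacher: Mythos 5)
Your outline is correct and follows essentially the same route as the paper, which records this statement as a citation of \cite[Theorem A]{M6} after assembling exactly the ingredients you use: the functor $\CB$ defined via basepoints $Q$ and analytic continuation on the locally constant sheaf, the gluing $\comp_p$ with its absolute convergence, the commuting square of Theorem \ref{thm_glue}, and the universality of the coend. The only slight imprecision is your treatment of the unit axiom through $\CB(\emptyset)$ and the vacuum (the operadic unit lives in arity one), but this does not affect the substance of the argument.
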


\section{Vertex operator algebra and homotopy 2-Swiss-Cheese operad}
\label{sec_homSC}
In this section, we consider 2-colored operads colored by $\{c,o\}$, where $c$ (resp. $o$) stands for ``closed'' (resp. ``open''). 
In Section \ref{sec_2magma}, we will recall the definition of the parenthesized permutation and braid operad $\PaPB$ which is introduced by Idrissi \cite{Id}.
In Section \ref{sec_SC}, we will construct an action of the 2-colored operad $\PaPB$ on the representation category of a vertex operator algebra $\Vmodf$.

\subsection{2-colored magma, trees and braids}
\label{sec_2magma}
Here, we review the definition of 2-colored operads $\Om\Om$ and $\PaPB$ from \cite{Id}
and give an explicit description of $\Om\Om$ by trees as in Section \ref{sec_magma}  (see \cite{Id} for more details).

A colored operad or a symmetric multicategory was introduced in \cite{Lam69}.
An element of a colored operad $\mathcal{O}$ with the colors $\{c,o\}$ of $(r,s)$-array operation has inputs labeled with r $c$'s and s $o$'s and the output labeled with $c$ or $o$. We denote the set of such operations with output labeled by $x \in \{c,o\}$ by:
\begin{align*}
\mathcal{O}^x(r,s) = \mathcal{O}(\underbrace{c,\dots,c}_{r},\underbrace{o,\dots,o}_{s};x).
\end{align*}
Let $V_c,V_o$ be vector spaces. Then, the endomorphism 2-colored operad is given by
\begin{align}
\End^c(r,s) &= \mathrm{Hom}(V_c^{\otimes r}\otimes V_o^{\otimes s}, V_c)\\
\End^o(r,s) &= \mathrm{Hom}(V_c^{\otimes r}\otimes V_o^{\otimes s}, V_o)
\end{align}
with obvious compositions.

In \cite{Id}, $\Om\Om$ is introduced as the free colored operad $O(\mu_c,\iota,\mu_o)$ on the three generators
$\cdot_c \in \Om\Om(c,c;c)$, $\iota \in \Om(c;o)$ and $\cdot_o \in \Om\Om(o,o;o)$.
An algebra over $\Om\Om$ in
$\underline{\text{Set}}$ is the data of
\begin{itemize}
\item
Sets $S_c$ and $S_o$ with maps $\cdot_c:S_c\times S_c \rightarrow S_c$ and $\cdot_o:S_o \times S_o \rightarrow S_o$,
i.e., a pair of magmas;
\item
A map $\iota:M_c \rightarrow M_o$ (not necessarily compatible with the products).
\end{itemize}
An element of $\Om\Om$ has, for example, the following form:
\begin{align*}
(\tau({\bf 2})\cdot_o 4 )\cdot_o (\tau({\bf 3}\cdot_c {\bf 1}) \cdot_o 5),
\end{align*}
where the bold numbers correspond to the color ``c" and the normal numbers correspond to the color ``o".
We can naturally associate a tree with leaves labeled by bold and normal numbers as in Fig \ref{fig_2tree}.

\begin{minipage}[l]{.4\textwidth}
\centering
\begin{forest}
for tree={
  l sep=20pt,
  parent anchor=south,
  align=center
}
[o,
[[{\bf2},edge label={node[midway,left]{$\tau$}}][4]]
[[{},edge label={node[midway,left]{$\tau$}}[{\bf3}][{\bf1}]]
[5]]
]
\end{forest}
\captionof{figure}{}
\label{fig_2tree}
\end{minipage}
\begin{minipage}[l]{.5\textwidth}
    \includegraphics[width=7cm]{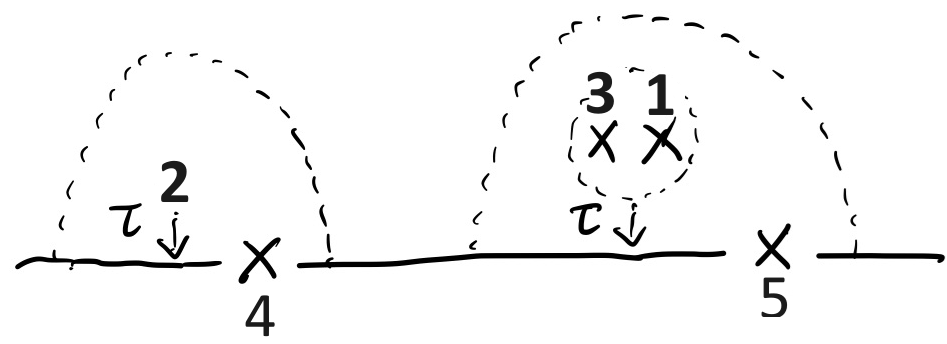}
Configuration of points in $X_{3,2}(\HH)$ corresponding to Fig \ref{fig_2tree}.
\end{minipage}

\begin{itemize}
\item
$\Tr^c(r)$ is a set of all binary trees whose leaves are labeled by bold numbers $\{\bf 1,2,\dots,r\}$.
\item
$\Tr^o(r,s)$ is a set of all binary trees whose leaves are labeled by bold numbers $\{\bf 1,2,\dots,r\}$
and normal numbers $\{r+1,\dots,r+s\}$ such that 
the labels on normal (open) leaves are arranged so that the numbers increase from left to right.
\item
We also consider the map corresponding to $\tau$ by the natural embedding:
\begin{align}
\Tr^c(r) \rightarrow \Tr^o(r,0). \label{eq_tr_co}
\end{align}
\end{itemize}

Then, $\{\Tr^c(r),\Tr^o(r,s)\}_{r,s\geq 0}$ form a two-colored operad with obvious composition of trees,
which is symmetric in the $c$-labels and not symmetric in the $o$-labels.
The following proposition is clear:
\begin{prop}
The colored operad $\{\Tr^c(r),\Tr^o(r,s)\}_{r,s\geq 0}$ is isomorphic to $\Om\Om$ as a 2-colored operad.
\end{prop}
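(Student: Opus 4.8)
The proposition is a ``free operad $=$ tree operad'' statement, and the plan is to produce the isomorphism from the universal property of $\Om\Om=O(\mu_c,\iota,\mu_o)$. Since $\Om\Om$ is by construction the free $2$-colored operad of the relevant type --- symmetric in the closed inputs, planar in the open inputs --- on the three generators $\mu_c\in\Om\Om^c(2,0)$, $\iota\in\Om\Om^o(1,0)$ and $\mu_o\in\Om\Om^o(0,2)$, a morphism of such operads out of $\Om\Om$ is exactly a choice of three elements in the corresponding components of the target. I would therefore let $\Phi\colon\Om\Om\to\{\Tr^c,\Tr^o\}$ be the unique operad morphism sending $\mu_c$ to the binary corolla in $\Tr^c(2)$ with closed leaves $\mathbf 1,\mathbf 2$, sending $\iota$ to the image under \eqref{eq_tr_co} of the trivial tree $\mathbf 1\in\Tr^c(1)$ (a single $\tau$-marked edge above one closed leaf), and sending $\mu_o$ to the binary corolla in $\Tr^o(0,2)$ with open leaves $1,2$.

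Next I would prove $\Phi$ surjective by induction on the number of leaves, reading off the root of a tree. The closed part $\{\Tr^c(r)\}$ coincides with the magma operad $\{\Tr_r\}$ of Section \ref{sec_magma}: a tree in $\Tr^c(r)$ has only binary $c$-vertices (nothing turns an open colour back to closed), so for $r\ge 2$ the root splits it into two subtrees in $\Tr^c(r_1),\Tr^c(r_2)$ with $r_1+r_2=r$, whence by induction (plus a relabelling in $S_r$) it is built from the $\mu_c$-corolla by two partial compositions; for $r=1$ it is the closed identity. For a tree $E\in\Tr^o(r,s)$ the root is open-coloured, so a root vertex that is not a single open leaf is either a binary $o$-vertex --- split $E$ into two subtrees of $\Tr^o$ and invoke the $\mu_o$-composition together with the inductive hypothesis --- or a unary $\tau$-vertex, which sits over a closed subtree in $\Tr^c(r)$ with $s=0$, so that $E$ is the $\iota$-composition with that closed tree. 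These cases are exhaustive because a vertex with open output must be an $o$-product or a $\tau$, never a $c$-product; hence $\Phi$ is onto.

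For injectivity the cleanest route is to show that $\{\Tr^c,\Tr^o\}$ itself has the universal property of the free $2$-colored operad on $\mu_c,\iota,\mu_o$, which forces $\Phi$ to be an isomorphism. Given any $2$-colored operad $\mathcal Q$ of the right type together with elements $q_c\in\mathcal Q^c(2,0)$, $q_\iota\in\mathcal Q^o(1,0)$, $q_o\in\mathcal Q^o(0,2)$, the root-decomposition of the previous paragraph defines, by induction on the number of leaves, a map $\{\Tr^c,\Tr^o\}\to\mathcal Q$: it involves no choices since the root of a tree is canonical, it sends the three elementary trees to $q_c,q_\iota,q_o$, and it is manifestly the only operad morphism that can do so, because any such morphism must obey the same recursion. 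The one substantive point --- and the main obstacle --- is verifying that this recursively defined map really is a morphism of $2$-colored operads, i.e.\ that it is compatible with the partial compositions $\circ_p$ and with the $S_r$-action permuting the closed inputs. This is a routine but bookkeeping-heavy induction matching ``graft a tree into a leaf'' with nested $\circ_p$'s in $\mathcal Q$; the only mild complication is that after each open composition the open leaf-labels have to be renormalised so as to stay increasing from left to right, which is precisely the planarity convention encoded in $\Tr^o$. Once this is checked, $\{\Tr^c,\Tr^o\}$ and $\Om\Om$ corepresent the same functor, so $\Phi$ is the canonical isomorphism.
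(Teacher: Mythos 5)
Your argument is correct. The paper offers no proof at all here --- the proposition is stated as ``clear'' --- and what you have written is the standard justification that the paper implicitly has in mind: $\Om\Om$ is free on $\mu_c,\iota,\mu_o$, the induced morphism $\Phi$ hits every tree by induction on the canonical root decomposition (binary $c$-vertex, binary $o$-vertex, or $\tau$-edge over an element of $\Tr^c(r)$ via \eqref{eq_tr_co}), and the tree operad satisfies the same universal property, forcing $\Phi$ to be an isomorphism. The only point you leave unverified, compatibility of the recursively defined map with $\circ_p$ and the $S_r$-action on closed labels (with the relabelling convention on open leaves), is indeed routine, so no genuine gap remains.
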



Set
\begin{align}
X_{r,s}(\HH) &=
\{(z_1,\ldots,z_{r+s})\in \mathbb C^{r+s}
\mid \mathrm{Im}\, z_i>0\ (1\le i\le r),\mathrm{Im}\, z_j=0\ (r<j\le r+s),\ 
z_a\ne z_b\ (a\ne b)\}.
\end{align}
The operad of {\it parenthesized permutations and braids} $\PaPB$ introduced in \cite{Id} is a 2-colored operad
in the category of categories, which is defined as follows:
\begin{figure}[h]
  \begin{minipage}[c]{0.7\linewidth}
    \centering
    \includegraphics[width=7cm]{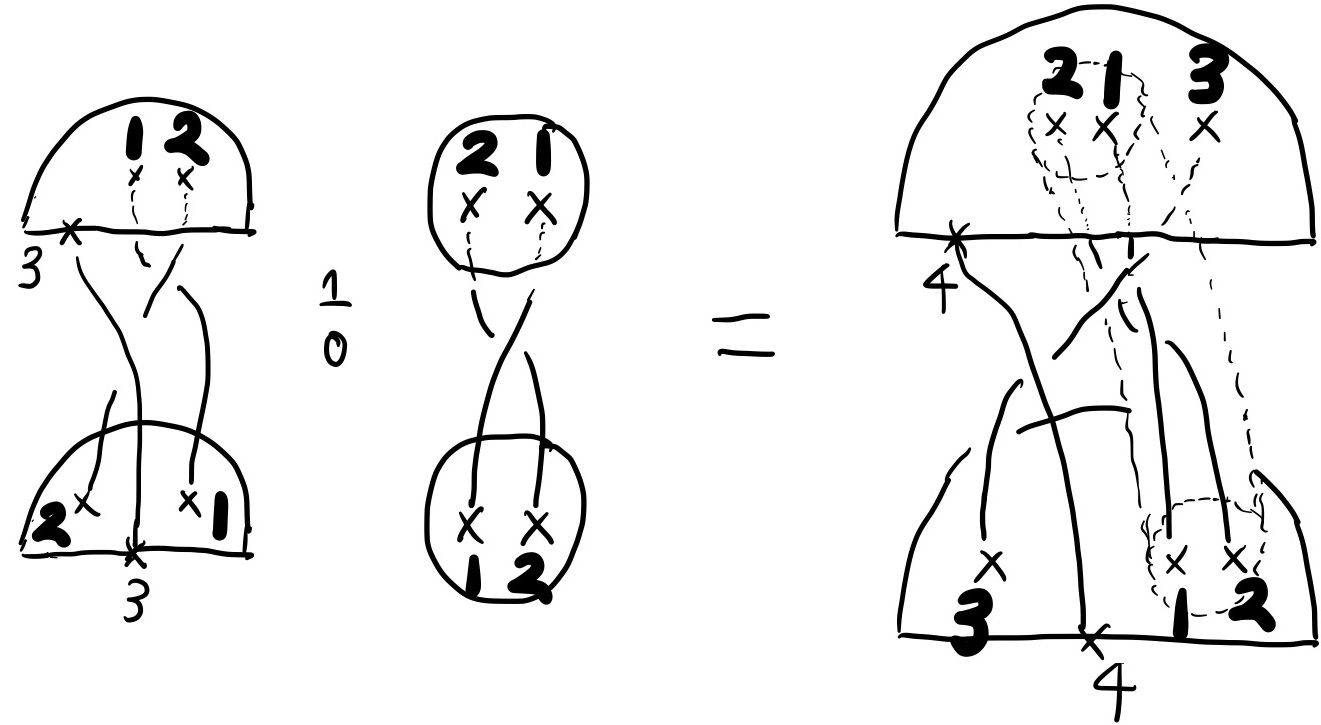}
    \caption{operad structure}\label{fig_comp}
  \end{minipage}
\end{figure}
As in Section \ref{sec_CPaB}, consider maps 
\begin{align*}
Q^c:\Tr^c(r)\rightarrow X_r(\R)\quad\text{and }\quad Q^o:\Tr^o(r,s) \rightarrow X_{r,s}(\HH)
\end{align*}
such that:
\begin{itemize}
\item[QB1)]
The order of $Q^c(A)$ and $A$ is equal for all $A \in \Tr^c$;
\item[QB2)]
The order of real parts of $Q^o(E)$ and $E$ is equal.
\end{itemize}
Then, $\PaPB^c(r)$ is a full subcategory of $\Pi_1(\Xr)$ whose objects are $\{Q^c(A)\}_{A \in \Tr^c(r)}$
and $\PaPB^o(r,s)$ is a full subcategory of $\Pi_1(X_{r,s}(\HH))$ whose objects are $\{Q^o(E)\}_{E \in \Tr^o(r,s)}$
which equipped with functors
\begin{align}
\tau(r):\PaPB^c(r) \rightarrow \PaPB^o(r,0). \label{eq_tau_CPaB}
\end{align}
Here, the functor $\tau(r)$ sends objects by \eqref{eq_tr_co}.
By (QB1) and (QB2), morphisms of $\CPaB^c(r) =\CPaB(r)$ canonically correspond to morphisms of 
$\PaPB^o(r,0)$.

An example of a composition of $\PaPB^o$ is given in Fig \ref{fig_comp},
which is compatible with \eqref{eq_tau_CPaB} and the operad structure on $\PaPB^c=\CPaB$.
Thus, $\PaPB$ is a 2-colored operad (see \cite{Id} for more details).
It is noteworthy that $\PaPB$ is equivalent to the fundamental groupoid of the Swiss-Cheese operad
$\Pi_1(\mathrm{SC}_2)$ \cite[Theorem 3.10]{Id} (see also \cite{Fr}).


\subsection{Conformal block on $\HH$ and homotopy action of Swiss-Cheese operad}
\label{sec_SC}
Let $C,D$ be small $\C$-linear categories.
The definition of a proendomorphism operad can be generalized to the colored case;
\begin{align*}
\Endp^o(r,s)&=\Func(C \times (D^\op)^r \times (C^\op)^s, \Vect) \\
\Endp^c(r,s)&=\Func(D \times (D^\op)^r \times (C^\op)^s, \Vect),
\end{align*}
which is a 2-colored 2-operad similarly to \eqref{eq_proend} and \eqref{eq_proend_comp}.

We consider the case of $C=\Vmodf$ (resp. $D=\Vmodft$), which corresponds to ``open strings / boundary states''  (resp. closed strings/ bulk states)
and construct functors
\begin{align*}
\CB^c: &\CPaB(r) \rightarrow \Endp_D^c(r),\\
\CB^o: &\PaPB^o(r,s) \rightarrow \Endp_{C,D}^o(r,s),
\end{align*}
which are shown to be a lax 2-morphism of colored 2-operads.

\vspace{3mm}

\textbf{(Definition of functors)}

\vspace{2mm}

Let $\Phi:X_{r,s}(\HH) \rightarrow X_{2r+s}(\C)$ be a continuous embedding given by
\begin{align*}
\Phi:X_{r,s}(\HH) \rightarrow X_{2r+s}(\C),\quad
(z_1,\dots,z_r,z_{r+1},\dots,z_{r+s}) &\mapsto (z_1,\bar{z}_1,\dots,z_r,\z_r,z_{r+1},\dots,z_{r+s}).
\end{align*}
Correspondingly, we define the following map that embeds o-colored trees $\Tr^o(r,s)$ into $\Tr_{2r+s}$ as follows:
\begin{align*}
\tilde{\bullet}:\Tr^o(r,s) \rightarrow \Tr_{2r+s},\quad E \mapsto \tilde{E}.
\end{align*}
\begin{minipage}[l]{.4\textwidth}
\centering
\begin{forest}
for tree={
  l sep=20pt,
  parent anchor=south,
  align=center
}
[o,
[[{\bf2},edge label={node[midway,left]{$\tau$}}][4]]
[[{},edge label={node[midway,left]{$\tau$}}[{\bf3}][{\bf1}]]
[5]]
]
\end{forest}
\captionof{figure}{}
\label{fig_double}
\end{minipage}
\begin{minipage}[l]{.6\textwidth}
\centering
\begin{forest}
for tree={
  l sep=20pt,
  parent anchor=south,
  align=center
}
[
[[[{\bf2}][$\bar{\bf 2}$]][4]]
[[{}[[{\bf3}][{\bf1}]][[$\bar{\bf3}$][$\bar{\bf1}$]] ]
[5]]
]
\end{forest}
\end{minipage}
For a tree $E \in \Tr^o(r,s)$, 
consider double copies of the bottoms of all the edges labeled with $\tau$.
The label of the leaf of each double-copy is ${\bf n}$ on the left side and $\bar{\bf{n}}$ on the right side if the original label is ${\bf n}$ (see Fig. \ref{fig_double}).




For $A \in \Tr^c(r)$, define $\CB^c(A) \in \Endp_D^c(r)$ by
\begin{align*}
\CB^c(A): D \times (D^\op)^r &\rightarrow \Vect\\
(M_0,\bar{M}_0, M_1,\bar{M}_1,\dots,M_r,\bar{M}_r)
&\mapsto \CB_{\Mr}(U_A) \otimes \CB_{\bar{M}_{[0;r]}}(U_A)
\end{align*}
and for $E \in \Tr^o(r,s)$,
\begin{align*}
\CB^o(E): C \times  (D^\op)^r \times (C^\op)^s &\rightarrow \Vect\\
(N, M_1,\bar{M}_1,\dots,M_r,\overline{M}_r,M_{r+1},\dots,M_{r+s}) &\mapsto
\CB_{{M}_{[0;r,s]}}(U_{\tilde{E}}),
\end{align*}
where $\CB$ are the chiral conformal blocks in section \ref{sec_chiral_CB}
and $\CB_{{M}_{[0;r,s]}}(U_{\tilde{E}})$ is the chiral conformal block with $N$ covariantly inserted at the infinity and $M_i,\bar{M}_i, M_{r+j}$ contravariantly inserted at the corresponding $2r+s$ leaves of $\tilde{E} \in \Tr_{2r+s}$.
Note that elements of $\CB_{\Mr}(U_A) \otimes \CB_{\bar{M}_{[0;r]}}(U_A)$
are multivalued holomorphic functions on $X_r(\C) \times X_r(\C)$
and $\CB_{{M}_{[0;r,s]}}(U_{\tilde{E}})$ are multivalued holomorphic functions on $X_{2r+s}(\C)$.
\vspace{3mm}

\fbox{
\begin{minipage}[l]{7cm}
\centering
\vspace{2mm}

$\CB^c: \PaPB^c(r) \rightarrow \Endp_D^c(r)$\\
\dotfill
\begin{align*}
\begin{array}{ccc}
A&& \CB_{\bullet}(U_A) \otimes \CB_{\bullet}(U_A)\\
\downarrow^{\ga} & \longmapsto &\downarrow_{A(\ga)\otimes A(\bar{\ga})} \\ 
A' &&\CB_{\bullet}(U_{A'}) \otimes \CB_{\bullet}(U_{A'}).
\end{array}
\end{align*}
\end{minipage}
}
\fbox{
\begin{minipage}[l]{7cm}
\centering
\vspace{2mm}

$\CB^o:\PaPB^o(r,s) \rightarrow \Endp_{C,D}^o(r,s)$\\
\dotfill
\begin{align*}
\begin{array}{ccc}
E&& \CB_{\bullet}(U_{\tilde{E}})\\
\downarrow^{\mu} & \longmapsto& \downarrow_{A(\Phi_*\mu)}\\ 
E' &&\CB_{\bullet}(U_{\tilde{E'}}).
\end{array}
\end{align*}
\end{minipage}
}
\vspace{3mm}

For $A,A' \in \Tr^c(r)$ and a path $\ga:A \rightarrow A'$ in $\PaPB^c(r) = \CPaB(r)$, let $\bar{\ga}$ be the complex conjugate of $\ga$ in $X_r(\C) \subset \C^r$.
Since we take the base points $Q(A)$ in $X_r(\R)$, $\bar{\ga}$ is again a path in $\CPaB(r)$. Define a map $\CB^c(A) \rightarrow \CB^c(A')$ by the analytic continuation along the path $\ga \times \bar{\ga}$ in $X_r(\C) \times X_r(\C)$.

Let $E,E' \in \Tr^o(r,s)$ and $\mu:E \rightarrow E'$ be a path in $\PaPB^o(r,s)$.
Then, $\Phi_*(\mu)$, the pushforward of the path by $\Phi:X_{r,s}(\HH) \rightarrow X_{2r+s}(\C)$, is a path in $X_{2r+s}(\C)$.
Define a map $\CB^o(E) \rightarrow \CB^o(E')$ by the analytic continuation along the path $\Phi_* \mu$ in $X_{2r+s}(\C)$.

Note that for $\iota \in \Om\Om(c;o) = \Tr^o(1,0)$,
\begin{align*}
\CB(\iota): C \times D^\op \rightarrow \Vect,\quad\quad
(M_0,M_1,\bar{M}_1) \mapsto I_{\log} \binom{M_0}{M_1\bar{M}_1},
\end{align*}
the space of intertwining operators by Proposition \ref{prop_int}.

\vspace{3mm}

\textbf{(Definition of compositions)}\\
We will define the compositions of 2-colored operad on $\CB^\bullet$.
There are three types of composites, closed-closed, open-open, and open-closed, which are defined as follows:
\begin{description}
\item[closed-closed)]
$\circ_p:\PaPB^c(r) \times \PaPB^c(t) \rightarrow \PaPB^c(r+t-1), (A,B)\mapsto A\circ_p B$ is given by
\begin{align}
\comp_p \times \comp_p:\CB(U_A)\otimes \CB(U_A) \times \CB(U_B)\otimes \CB(U_B)
{\rightarrow}
\CB(U_{A\circ_p B})\otimes \CB(U_{A\circ_p B}),
\end{align}
where $\comp_p$ is the glueing map of chiral conformal blocks given in \eqref{eq_glue},
and we glued the right and left conformal blocks independently.
\item[open-open)]
$\circ_p:\PaPB^o(r,s) \times \PaPB^o(t,u) \rightarrow \PaPB^o(r+t,s+u-1), (E,F)\mapsto E\circ_p F$
with an open leaf $p$ of $E \in \Tr^o(r,s)$ is given by
\begin{align}
\comp_p: \CB(U_{\tilde{E}}) \times \CB(U_{\tilde{F}}) \rightarrow \CB(U_{\tilde{E}\circ_p \tilde{F}})=\CB(U_{\widetilde{E \circ_p^o F}}).
\end{align}
Here, $\tilde{E}\circ_p \tilde{F}$ is a binary tree in $\Tr_{(2r+s)+(2t+u)-1}$
and $E \circ_p^o F$ is a colored tree in $\Tr^o(r+t,s+u-1)$.
It is clear that
\begin{align*}
\tilde{E}\circ_p \tilde{F} = \widetilde{E\circ_p F}.
\end{align*}
holds in $\Tr_{(2r+s)+(2t+u)-1}$.
\item[open-closed)]
$\circ_p:\PaPB^o(r,s) \times \PaPB^c(t) \rightarrow \PaPB^o(r+t-1,s), (E,A)\mapsto E\circ_p A$
with a closed leaf $p$ of $E \in \Tr^o(r,s)$ is given by
\begin{align*}
\comp_p\circ \comp_{\bar{p}}:\CB(U_{\tilde{E}}) \times\CB(U_A)\otimes \CB(U_A) \rightarrow \CB(U_{\tilde{E}\circ_{(p,\bar{p})}(A,A)})
=\CB(U_{\widetilde{E \circ_p^c A}}).
\end{align*}

\begin{minipage}[l]{.6\textwidth}
The closed leaves are always in pairs of $(p,\bar{p})$. We denote the closed leaf on the left side $p$. 
We insert the same tree $A$ at $p,\bar{p}$ of the tree $\tilde{E} \in \Tr_{2r+s}$,
which is denoted by $\tilde{E}\circ_{(p,\bar{p})}(A,A) \in \Tr_{2r+s+2t-2}$.
For example,
$\widetilde{(\tau({\bf2}) \circ_o 1)} \circ_{\bf2,\bar{\bf2}} ((2 \circ_c 1),(2 \circ_c 1))$ in $\Tr_{5}$ is describe in Fig. \ref{fig_open_closed_tree_insert}.
\end{minipage}
\begin{minipage}[r]{.27\textwidth}
\begin{forest}
for tree={
  l sep=20pt,
  parent anchor=south,
  align=center
}
[
[[,edge label={node[midway,left]{$\circ$}}[{\bf3}][{{\bf2}}]] [,edge label={node[midway,right]{$\bar{\circ}$}}[$\bar{{\bf3}}$][$\bar{{\bf2}}$]]]
[[[1]]]
]
\end{forest}
\captionof{figure}{}
\label{fig_open_closed_tree_insert}
\end{minipage}

It is clear that 
\begin{align*}
\tilde{E}\circ_{(p,\bar{p})}(A,A) =\widetilde{E \circ_p^c A}
\end{align*}
holds in $\Tr_{2r+s+2t-2}$.
\end{description}

Then, we have the following theorem (see \cite[Definition 2.12]{M6} for the definition of lax 2-morphism).
\begin{thm}
\label{thm_SC_action}
Let $V$ be a positive graded vertex operator algebra.
Then, 
\begin{align*}
\CB: \PaPB^\bullet(r,s) \rightarrow \Endp^\bullet(r,s), \quad\quad\bullet \in \{c,o\},r,s \geq 0
\end{align*}
is a lax 2-morphism of colored 2-operads.
\end{thm}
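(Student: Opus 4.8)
The plan is to deduce the theorem from the single-colored statement (Theorem \ref{thm_action}) together with the chiral glueing result (Theorem \ref{thm_glue}), transporting everything through the doubling maps $\tilde{\bullet}:\Tr^o(r,s)\to\Tr_{2r+s}$ and $\Phi:X_{r,s}(\HH)\to X_{2r+s}(\C)$. Since $\CB^c$ and $\CB^o$ are assembled entirely from chiral blocks $\CB_{\bullet}(U_A)$ and $\CB_{\bullet}(U_{\tilde E})$ that already live on configuration spaces of $\C$, the only genuinely two-colored input is combinatorial (compatibility of $\tilde{\bullet}$ with operad composition) and topological (compatibility of $\Phi_*$ with composition of paths). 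First I would check that $\CB^c$ and $\CB^o$ are well-defined $\C$-linear functors. For $\CB^c$, given a morphism $\ga:A\to A'$ of $\PaPB^c(r)=\CPaB(r)$, its complex conjugate $\bar\ga$ is again a morphism of $\CPaB(r)$ because the base points $Q^c(A)$ lie on the real locus $X_r(\R)$, so $A(\ga)\otimes A(\bar\ga)$ is defined by Theorem \ref{thm_action} and functoriality is inherited componentwise. For $\CB^o$, a morphism $\mu:E\to E'$ of $\PaPB^o(r,s)\subset\Pi_1(X_{r,s}(\HH))$ pushes forward under the continuous embedding $\Phi$ to a path $\Phi_*\mu$ in $X_{2r+s}(\C)$ whose endpoints $\Phi(Q^o(E))$ and $\Phi(Q^o(E'))$ lie in $U_{\tilde E}$ and $U_{\tilde{E'}}$ by condition (QB2); since $\CB_{M_{[0;2r+s]}}$ is a locally constant sheaf of finite rank on $X_{2r+s}(\C)$ (Proposition \ref{prop_coherence}), analytic continuation along $\Phi_*\mu$ is well-defined and depends only on its homotopy class, and functoriality follows because $\Phi_*$ respects concatenation of paths and identities. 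Naturality of both functors in the $V$-module arguments is inherited from the chiral case, and $\CB^o$ sends the generator $\iota\in\Tr^o(1,0)$ to $I_{\log}\binom{M_0}{M_1\bar{M}_1}$ by Proposition \ref{prop_int}.

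Next I would show the three composition maps are well-defined natural transformations. The combinatorial identities $\tilde E\circ_p\tilde F=\widetilde{E\circ_p F}$ (at an open leaf $p$) and $\tilde E\circ_{(p,\bar p)}(A,A)=\widetilde{E\circ_p^c A}$ (at a closed leaf $p$), already recorded in Section \ref{sec_SC}, let one realize each composition through the chiral glueing $\comp_p$ of Theorem \ref{thm_glue}: the open--open composition is a single chiral glueing of $\CB(U_{\tilde E})$ with $\CB(U_{\tilde F})$; the open--closed composition is the composite $\comp_p\circ\comp_{\bar p}$ of two chiral glueings along the distinct leaves $p$ and $\bar p$ of $\tilde E$, which commute precisely because the two leaves are distinct; and the closed--closed composition is $\comp_p\otimes\comp_p$ applied independently to the holomorphic and anti-holomorphic factors. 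That the glued series converge locally uniformly, hence that these maps genuinely land in the asserted target blocks, is exactly the content of Theorem \ref{thm_glue}.

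Then I would verify the lax-morphism diagrams, i.e.\ compatibility of $\comp_p$ with morphisms, and the associativity and unit coherences. The closed--closed square is the square \eqref{eq_lax_diagram_lem} of the chiral theory applied separately to the holomorphic factor (for $\ga$) and to the anti-holomorphic factor (for $\bar\ga$). For the open--open and open--closed squares the crucial input is path-level compatibility: $\Phi_*(\mu\circ_p\nu)$ is homotopic, in the appropriate configuration space of $\C$, to the $\CPaB$-composition $\Phi_*\mu\circ_p\Phi_*\nu$, and $\Phi_*(\mu\circ_p g)$ at a closed leaf is homotopic, after doubling, to $\Phi_*\mu\circ_{(p,\bar p)}(g,\bar g)$; granting these, the squares follow by applying \eqref{eq_lax_diagram_lem} for $\CPaB$ to $\tilde E$ and $\tilde F$ (once, respectively twice at $p$ and $\bar p$). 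Finally, the associativity and unit coherences of the lax 2-morphism reduce to those of Theorem \ref{thm_action}, using that $\tilde{\bullet}$ intertwines the 2-colored operad composition of $\Om\Om\cong\{\Tr^c(r),\Tr^o(r,s)\}$ with iterated compositions in the magma operad $\{\Tr_n\}$, so that every coherence diagram for $\CB^\bullet$ is the image under doubling of a coherence diagram for $\CB$.

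I expect the main obstacle to be exactly this path-level compatibility: that the operad composition of $\PaPB$, defined via the fundamental groupoid of the Swiss-cheese configuration spaces, is genuinely intertwined up to homotopy with the $\CPaB$ composition under $\Phi_*$ and $\tilde{\bullet}$ — and in particular that inserting a closed operation into an open slot becomes, after doubling, the \emph{paired} insertion $\circ_{(p,\bar p)}$ at mirror-image leaves. On objects this compatibility is the statement noted as clear in Section \ref{sec_SC}, but on morphisms it requires writing down an explicit homotopy realizing the two ways of shrinking and inserting a little disk (or half-disk) inside $X_{2r+s}(\C)$; once that homotopy is available, the rest of the theorem is bookkeeping layered on the previously established chiral results.
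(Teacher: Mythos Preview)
Your proposal is correct and follows essentially the same approach as the paper's proof: reduce everything to the single-colored Theorem \ref{thm_action}/\ref{thm_glue} via the doubling maps $\tilde{\bullet}$ and $\Phi$, and identify as the only nontrivial point the path-level homotopies $\Phi_*(\mu\circ_p^o\nu)\simeq\Phi_*\mu\circ_p\Phi_*\nu$ and $\Phi_*(\mu\circ_p^c\ga)\simeq\Phi_*\mu\circ_{(p,\bar p)}(\ga,\bar\ga)$. If anything, you are more thorough than the paper, which records only the commutative squares for the three composition types and simply asserts the homotopy identities without further justification; your discussion of functoriality, well-definedness of the glueing maps, and the associativity/unit coherences is additional detail the paper leaves implicit.
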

\begin{proof}
It suffices to show that the above three compositions are compatible with analytic continuations along the paths, i.e., the diagrams in Theorem \ref{thm_glue} for $\PaPB$ commute.
In the closed-closed case, it follows immediately from Theorem \ref{thm_glue}.

In the open-open case, for paths $\mu:E \rightarrow E'$ in $\PaPB^o(r,s)$
 and $\nu:F \rightarrow F'$ in $\PaPB^o(t,u)$,
by Theorem \ref{thm_glue}, the following diagram commutes:
\begin{align*}
\begin{array}{ccc}
\CB(U_{\tilde{E}}) \times \CB(U_{\tilde{F}}) 
&\overset{\comp_p}{\rightarrow} &\CB(U_{\tilde{E}\circ_p \tilde{F}})\\
\downarrow_{A(\Phi_*\mu)\otimes A(\Phi_*\nu)} & &\downarrow_{
A(\Phi_*\mu \circ_p \Phi_*\nu)} \\ 
\CB(U_{\tilde{E'}}) \times \CB(U_{\tilde{F'}}) 
&\overset{\comp_p}{\rightarrow} &\CB(U_{\tilde{E'}\circ_p \tilde{F'}}).
\end{array}
\end{align*}
Since 
\begin{align*}
\Phi_*\mu \circ_p \Phi_*\nu = \Phi_*(\mu\circ_p^o \nu)
\end{align*}
up to homotopy in $X_{(2r+s)+(2t+u)-1}(\C)$,
where $\Phi_*\mu \circ_p \Phi_*\nu$ is the composition in $\CPaB$
and $\mu\circ_p^o \nu$ is the composition in $\PaPB$,
the assertion holds.

In the open-closed case, for paths $\mu:E \rightarrow E'$ in $\PaPB^o(r,s)$
 and $\ga:A \rightarrow A'$ in $\PaPB^c(t)$,
by Theorem \ref{thm_glue}, the following diagram commutes:
\begin{align*}
\begin{array}{ccc}
\CB(U_{\tilde{E}}) \times (\CB(U_{A})\otimes \CB(U_A))
&\overset{\comp_p}{\rightarrow} &\CB(U_{\tilde{E}\circ_{p,\bar{p}}(A,A)})\\
\downarrow_{A(\Phi_*\mu)\otimes A(\ga)\otimes A(\bar{\ga})} & &\downarrow_{
A(\Phi_*\mu \circ_{p,\bar{p}}(\ga,\bar{\ga})}\\ 
\CB(U_{\tilde{E'}}) \times (\CB(U_{A'})\otimes \CB(U_{A'}))
&\overset{\comp_p}{\rightarrow} &\CB(U_{\tilde{E'}\circ_{p,\bar{p}}(A',A')}).
\end{array}
\end{align*}
Since 
\begin{align*}
\Phi_*\mu \circ_{p,\bar{p}}(\ga,\bar{\ga}) = \Phi_*(\mu\circ_p^o \ga)
\end{align*}
up to homotopy in $X_{(2r+s)+2t-2}(\C)$,
and $\mu\circ_p^o \ga$ is the composition in $\PaPB$,
the assertion holds.
\end{proof}

We have given conformal blocks as a function on open regions $U_{\tilde{E}}$ in $X_{2r+s}(\C)$.
As we will see in the next section, the correlation functions of conformal field theory are single valued function on $X_{r,s}(\HH)$ and not $X_{2r+s}(\C)$.
We end this section by defining the open regions where correlation functions are actually defined.


For $A \in \Tr^c(r)$, set $U_A^c = \overline{U}_A$, where $\overline{U}_{A}$ is the open region without branch cut at the origins defined in \eqref{eq_no_cut}. We regard $U_A^c$ as a subset of $\overline{U}_{A}\times \overline{U}_{A} \subset X_r(\C) \times X_r(\C)$ by
\begin{align}
U_A^c \hookrightarrow \overline{U}_{A}\times \overline{U}_{A},\quad z_{[r]} \mapsto (z_{[r]},\z_{[r]}). \label{eq_UAc}
\end{align}

For an open tree $E \in \Tr^o(r,s)$, set
\begin{align}
U_E^o &= \Phi^{-1}(\overline{U}_{\tilde{E}}) \cap X_{r,s}(\HH) \label{eq_UEo} \\ 
&= \{w=(z_1,\dots,z_r,z_{r+1},\dots,z_{r+s}) \in X_{r,s}(\HH)\mid \Phi(w) \in \overline{U}_{\tilde{E}}\},\nonumber
\end{align}
which is regarded as a domain in \(X_{r,s}(\HH)\).

\section{Algebra of bulk-boundary 2d CFT and consistency}
\label{sec_consistency}
The bulk OPE algebra of 2d conformal field theory was introduced by Huang and Kong \cite{HK1}.
Later, it is generalized into the bulk-boundary OPE algebra by Kong in \cite{Ko1,Ko3}.
The following is different from the data originally given by Kong, but is equivalent under the assumption of local $C_1$-cofiniteness as we will see in this paper.
In Kong's definition, the open-closed operator is replaced by a linear map $Y_{\text{bulk-bdy}}(\bullet,\uz):F^c \rightarrow \End F^o[[z,\z]]$.
In this paper, boundary conformal field theory is given by the following data:
\begin{description}
\item[state space)]
An $\R^2$-graded vector space $F^c=\bigoplus_{h,\h \in \R}F_{h,\h}^c$ and 
an $\R$-graded vector space $F^o=\bigoplus_{h \in \R}F_{h}^o$;
\item[closed-closed operator)] A linear map with formal variables $z,\z$
\begin{align*}
Y^c(\bullet,\uz):F^c \rightarrow \End F^c[[z,\z,|z|^\R]],\quad
a \mapsto Y(a,\uz)=\sum_{r,s\in \R}a(r,s)z^{-r-1}\z^{-s-1};
\end{align*}
\item[open-open operator)] A linear map with a formal variable $x$
\begin{align*}
Y^o(\bullet,z):F^o &\rightarrow \End F^o[[z^\R]],\quad v \mapsto Y(v,z)=\sum_{r\in \R}v(r)z^{-r-1};
\end{align*}
\item[{open-closed operator})] {A linear map} with a formal variable $z$
\begin{align*}
Y^b(\bullet,z): F^c \rightarrow F^o[[z^\R]],\quad
a \mapsto Y^b(a,z)=\sum_{r\in \R} B_r(a) z^{-r-1};
\end{align*}
\item[vacuum vector)]
$\va^c \in F_{0,0}^c$ and $\va^o \in F_0^o$;
\item[Virasoro elements)] $\om \in F_{2,0}^c$, $\omb \in F_{0,2}^c$ and $\nu \in F_2^o$.
\end{description}
Those three OPEs $Y^c,Y^o,Y^b$ classically correspond to the three generators of 2-colored magma in Section \ref{sec_2magma}.

Using the action of $\PaPB$ on conformal blocks, we show in this section that for any 2-colored tree $E\in \Tr^o(r,s)$, the three OPEs converge in the corresponding open region  in the upper half-plane $U_E^o \subset X_{r,s}(\HH)$ and define the correlation functions which are independent of orders and parentheses of OPEs.

In Section \ref{sec_full_VOA}, we will study the consistency of a bulk algebra $(F^c,Y^c,\va^c)$ for any binary tree $\Tr^c$.
In section \ref{sec_boundary} and section \ref{sec_bulk_boundary},the consistency of a boundary algebra $(F^o,Y^o,\va^o)$ and a bulk-boundary algebra $(F^c,F^o,Y^c,Y^o,Y^b)$ will be studied.

\subsection{Bulk OPE algebra}
\label{sec_full_VOA}
A mathematical formulation of the OPE algebra in a 2d conformal field theory without boundary was introduced in \cite{HK1}.
We focus on the consistency of OPEs from the bootstrap equation.
We will look back at a slightly different reformulation given in \cite{M1} based on the bootstrap equation
 and formulate and prove the consistency of a bulk conformal field theory based on Section \ref{sec_homSC}.

A full vertex operator algebra is a generalization of a vertex operator algebra as in the following table:
\begin{table}[htb]
\begin{tabular}{|l||c|c|}\hline
 & chiral vertex algebra & full vertex algebra \\ \hline \hline
symmetry & $\Vir_c$ & $\Vir_c \oplus \Vir_{\bar{c}}$ \\
vector space &  $V=\bigoplus_{n \in \Z} V_n$ & $F=\bigoplus_{h,\h \in \R^2} F_{h,\h}$ \\
vertex operator &$\End V[[z^\pm]]$ & $\End F[[z,\z,|z|^\R]]$ \\
pole & $\C((z))$ & $\C((z,\z,|z|^\R))$ \\ \hline
\end{tabular}
\vspace{2mm}
\caption{Comparison of chiral and full vertex algebras}
 \label{table_add}
\end{table}
Since the full conformal field theory has two mutually commuting Virasoro algebras acting on it, it has $\R^2$-grading by $L(0),\overline{L(0)}$.


Let $F=\bigoplus_{h,\h\in \R}F_{h,\h}$ be an $\R^2$-graded vector space
and $L(0),\Ld(0):F\rightarrow F$ linear maps defined by
$L(0)|_{F_{h,\h}}=h \id_{F_{h,\h}}$ and 
$\Ld(0)|_{F_{h,\h}}=\h \id_{F_{h,\h}}$
for any $h,\h\in \R$.
We assume that:
\begin{enumerate}
\item[FO1)]
$F_{h,\h}=0$ unless $h-\h \in \Z$;
\item[FO2)]
There exists $N \in \R$ such that
$F_{h,\h}=0$ unless $h\geq N$ and $\h \geq N$;
\item[FO3)]
For any $H\in \R$,
$\bigoplus_{h+\h \leq H}F_{h,\h}$ is finite-dimensional.
\end{enumerate}
Set 
\begin{align*}
F^\vee =\bigoplus_{h,\h\in\R} F_{h,\h}^*,
\end{align*}
where $F_{h,\h}^*$ is the dual vector space.

We will use the notation $\uz$ for the pair $(z,\z)$ and $|z|^2$ for $z\z$.
For a vector space $V$,
we denote by $V[[z^\R,\z^\R]]$ the set of formal sums 
$$\sum_{r,s \in \R} a_{r,s}z^{r} \z^{s},$$
and
by $V[[z,\z,|z|^\R]]$
the subspace of $V[[z^\R,\z^\R]]$
such that
\begin{itemize}
\item
$a_{r,s}=0$ unless $r-s \in \Z$.
\end{itemize}
We also denote by
$V((z,\z,|z|^\R))$ the subspace of $V[[z,\z,|z|^\R]]$ spanned by the series satisfying
\begin{itemize}
\item
There exists $N \in \R$ such that
$a_{r,s}=0$ unless $r,s \geq N$;
\item
For any $H\in \R$,
$$\{(r,s) \;|\; a_{r,s}\neq 0 \text{ and }r+s \leq H \}$$
is a finite set.
\end{itemize}

The formal power series $\C((z,\z,|z|^\R))$ is a generalization of the Laurent series bounded below into two variables, which describes singularities of correlation functions in 2d compact conformal field theory.

A  {\it full vertex operator} on $F$ is a linear map
\begin{align*}
Y(\bullet, z,\z):F \rightarrow \mathrm{End}(F)[[z,\z,|z|^\R]],\; a\mapsto Y(a,z,\z)=\sum_{r,s \in \R}a(r,s)z^{-r-1}\z^{-s-1}
\end{align*}
such that:
\begin{align}
\begin{split}
[L(0),Y(a,\uz)]&= z\frac{d}{dz}Y(a,\uz) + Y(L(0)a,\uz),\\
[\Ld(0),Y(a,\uz)]&= \z\frac{d}{d\z}Y(a,\uz) + Y(\Ld(0)a,\uz).
\label{eq_L0_cov}
\end{split}
\end{align}
Then, by (FO1), (FO2) and (FO3), $Y(a,\uz)b \in F((z,\z,|z|^\R))$ (see \cite[Proposition 1.5]{M1}).

By \eqref{eq_L0_cov} (for more detail, see \cite[Lemma 1.6]{M1}), for $u \in F_{h_0,\h_0}^\vee$ and $a_i \in F_{h_i,\h_i}$ we have
\begin{align}
u(Y(a_1,\uz_1)Y(a_2,\uz_2)a_3) \in z_2^{h_0-h_1-h_2-h_3}\z_2^{\h_0-\h_1-\h_2-\h_3}\C\Bigl(\Bigl(\frac{z_2}{z_1},\frac{\z_2}{\z_1},\Bigl|\frac{z_2}{z_1}\Bigr|^\R\Bigr)\Bigr),\label{eq_conv_rad2}\\
u(Y(Y(a_1,\uz_0)a_2,\uz_2)a_3) \in z_2^{h_0-h_1-h_2-h_3}\z_2^{\h_0-\h_1-\h_2-\h_3}\C\Bigl(\Bigl(\frac{z_0}{z_2},\frac{\z_0}{\z_2},\Bigl|\frac{z_0}{z_2}\Bigr|^\R\Bigr)\Bigr),
\label{eq_conv_rad}
\end{align}
where the left-hand side of \eqref{eq_conv_rad} is a formal series in $z_0,\z_0,|z_0|^\R, z_2,\z_2,|z_2|^\R$
but contains only ratios $\left(\frac{z_0}{z_2}\right)^n, \left(\frac{\z_0}{\z_2}\right)^m, \left|\frac{z_0}{z_2}\right|^s$ with $n,m\in \Z$ and $s\in\R$ up to the factor in front.

\begin{dfn}\label{def_full_VA}
A {\it full vertex algebra} is an $\R^2$-graded $\C$-vector space
$F=\bigoplus_{h,\h \in \R^2} F_{h,\h}$ equipped with a
full vertex operator $Y(\bullet,\uz):F \rightarrow \mathrm{End}(F)[[z,\z,|z|^\R]]$
and an element $\va \in F_{0,0}$ satisfying the following conditions:
\begin{enumerate}
\item[FV1)]
For any $a \in F$, $Y(a,\uz)\vac \in F[[z,\z]]$ and $\displaystyle{\lim_{\uz \to 0}Y(a,\uz)\vac = a(-1,-1)\vac=a}$.
\item[FV2)]
$Y(\vac,\uz)=\mathrm{id}_F \in \End F$;
\item[FV3)]
For any $a_i \in F_{h_i,\h_i}$ and $u \in F_{h_0,\h_0}^*$, \eqref{eq_conv_rad2} and \eqref{eq_conv_rad} are absolutely convergent in $\{|z_1|>|z_2|\}$ and $\{|z_0|<|z_2|\}$, respectively, and there exists a real analytic function $\mu: Y_2(\C)\rightarrow \C$ such that:
\begin{align*}
u(Y(a_1,\uz_1)Y(a_2,\uz_2)a_3) &= \mu(z_1,z_2)|_{|z_1|>|z_2|}, \\
u(Y(Y(a_1,\uz_0)a_2,\uz_2)a_3) &= \mu(z_0+z_2,z_2)|_{|z_2|>|z_0|},\\
u(Y(a_2,\uz_2)Y(a_1,\uz_1)a_3)&=\mu(z_1,z_2)|_{|z_2|>|z_1|}
\end{align*}
where $Y_2(\C)=\{(z_1,z_2)\in \C^2\mid z_1\neq z_2,z_1\neq 0,z_2\neq 0\}$.
\end{enumerate}
\end{dfn}

\begin{rem}
\label{rem_stuchannel}
Left-hand-sides of (FV3) coincide under conformal transformations with what is called the {\it s,t,u-channels} in physics \cite{M2}. Thus, a full vertex algebra is a formulation of conformal field theory by the bootstrap equation.
\end{rem}

Let $F$ be a full vertex algebra
and $D$ and $\dD$ denote the endomorphism of $F$
defined by $Da=a(-2,-1)\bm{1}$ and $\dD a=a(-1,-2)\va$ for $a\in F$,
i.e., $$Y(a,z)\va=a+Daz+\dD a\z+\dots.$$
Then, similarly to the vertex algebra, we have (see \cite[Proposition 3.7, Lemma 3.11, Lemma 3.13]{M1}):
\begin{prop}\label{prop_skew}
Let $F$ be a full vertex algebra. Then, the following properties hold:
\begin{description}
\item[translation invariance)] For any $a \in F$,
\begin{align*}
[D,Y(a,\uz)] &= Y(Da,\uz)=\dz Y(a,\uz),\\
[\dD,Y(a,\uz)] &= Y(\dD a,\uz)=\frac{d}{d\z} Y(a,\uz).
\end{align*}
\item[skew-symmetry)] For any $a,b\in F$,
\begin{align*}
Y(a,\uz)b=\exp(zD+\z\dD)Y(b,-\uz)a.
\end{align*} 
\end{description}
Moreover, if $\dD a=0$, then for any $n\in \Z$ and $b\in F$,
\begin{align*}
[a(n,-1),Y(b,\uz)]&= \sum_{j \geq 0} \binom{n}{j} Y(a(j,-1)b,\uz)z^{n-j},\\
Y(a(n,-1)b,\uz)&= 
\sum_{j \geq 0} \binom{n}{j}(-1)^j a(n-j,-1)z^{j}Y(b,\uz) \\
&\qquad -Y(b,\uz)\sum_{j \geq 0} \binom{n}{j}(-1)^{j+n} a(j,-1)z^{n-j}.
\end{align*}
Furthermore, if $\dD a =0$ and $D b=0$, then $[Y(a,\uz),Y(b,\uz)]=0$.
\end{prop}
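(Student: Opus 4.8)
The plan is to derive all three groups of assertions --- translation invariance, skew-symmetry, and the half-commutator and locality formulas --- as the full vertex algebra analogues of the classical ``reconstruction'' identities, working entirely from the creation axiom (FV1) and the duality axiom (FV3) of Definition~\ref{def_full_VA} and bookkeeping the formal variables $z,\z$ and $|z|^\R$ throughout; this is exactly the route taken in \cite{M1}. Since the axioms are symmetric under interchange of the holomorphic and antiholomorphic data, I would only prove the $z$-statements, the $\z$-statements following by the same argument with $D$ replaced by $\dD$.

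\textbf{Translation invariance.} First I would record, from (FV1), that $Y(a,\uz)\va=\sum_{m,n\geq 0}(a(-m-1,-n-1)\va)\,z^m\z^n$ is a power series whose linear part is $(Da)z+(\dD a)\z$. Then I would apply (FV3) with third argument $c=\va$: the associativity expansion $u(Y(Y(a,\uz_0)b,\uz_2)\va)$ and the expansion $u(Y(a,\uz_1)Y(b,\uz_2)\va)$ are two restrictions of one real analytic function $\mu$ on $Y_2(\C)$, and because $Y(\,\cdot\,,\uz_2)\va$ is an honest power series in $z_2,\z_2$ I can compare the coefficients of $z_2^0$ and $z_2^1$ on each side with the corresponding low-order Taylor data of $\mu$. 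This comparison gives, for all $u$ and $b$,
\begin{align*}
[D,Y(a,\uz)]\,b=\dz Y(a,\uz)\,b=Y(Da,\uz)\,b .
\end{align*}
Feeding $[D,Y(a,\uz)]=\dz Y(a,\uz)$ and its $\dD$-analogue back into $Y(a,\uz)\va$ and using $D\va=\dD\va=0$ yields $Y(a,\uz)\va=\exp(zD+\z\dD)a$, and equality of mixed second partials forces $[D,\dD]=0$.

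\textbf{Skew-symmetry.} From translation invariance I would first get the covariance $e^{wD+\w\dD}\,Y(b,\uz)\,e^{-wD-\w\dD}=Y(b,\underline{z+w})$ as a formal identity. The commutativity clause of (FV3) with $c=\va$ says $u(Y(a,\uz_1)Y(b,\uz_2)\va)$ and $u(Y(b,\uz_2)Y(a,\uz_1)\va)$ restrict to the same $\mu$; rewriting the second using $Y(a,\uz_1)\va=e^{z_1D+\z_1\dD}a$ and the covariance turns it into $u\bigl(e^{z_1D+\z_1\dD}Y(b,\underline{z_2-z_1})a\bigr)$, so by analytic continuation $u(Y(a,\uz_1)Y(b,\uz_2)\va)$ and $u\bigl(e^{z_1D+\z_1\dD}Y(b,\underline{z_2-z_1})a\bigr)$ agree as functions; letting $z_2\to0$ and using $Y(b,\uz_2)\va\to b$ gives the skew-symmetry $Y(a,\uz)b=\exp(zD+\z\dD)Y(b,-\uz)a$.

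\textbf{Commutator formulas and locality.} Assuming $\dD a=0$, I would note $Y(\dD a,\uz)=0$ and $[\dD,Y(a,\uz)]=0$, which together with skew-symmetry forces $Y(a,\uz)$ to involve only the modes $a(n,-1)$, i.e.\ to be holomorphic in $z$. The two displayed identities for $[a(n,-1),Y(b,\uz)]$ and for $Y(a(n,-1)b,\uz)$ are then the usual commutator and iterate formulas, extracted from (FV3) by the standard formal-residue (contour-deformation) argument --- the hypothesis $\dD a=0$ being precisely what removes the antiholomorphic variable of $a$ from the relevant contour integrals. Finally, if also $Db=0$, then applying the commutator formula with the roles of $a$ and $b$ exchanged shows every $a(m,-1)$ commutes with every $b(n,-1)$, hence $[Y(a,\uz),Y(b,\uz)]=0$. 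The step I expect to be the main obstacle is not conceptual but the analytic bookkeeping: one must justify termwise differentiation and the specializations $z_2\to0$ of the doubly-infinite series in $F[[z,\z,|z|^\R]]$, and pass cleanly between these formal manipulations and the genuine real analytic $\mu$ on $Y_2(\C)$ --- more delicate than in one-variable vertex algebra theory because of the extra variable and the real powers $|z|^\R$ --- with the care concentrated at the passage to $Y(a,\uz)\va=\exp(zD+\z\dD)a$ with $[D,\dD]=0$ and at checking that $\dD a=0$ really collapses $Y(a,\uz)$ to its holomorphic part.
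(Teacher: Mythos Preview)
Your proposal is correct and follows exactly the route of \cite[Proposition 3.7, Lemma 3.11, Lemma 3.13]{M1} that the paper cites in lieu of a proof: translation invariance from (FV3) specialized at the vacuum, then skew-symmetry via the commutativity clause with $c=\va$, then the Borcherds-type commutator and iterate identities once $\dD a=0$ collapses $Y(a,\uz)$ to its holomorphic part. Two small points to tighten: the equality $Y(Da,\uz)=\tfrac{d}{dz}Y(a,\uz)$ requires the specialization with \emph{middle} argument $\va$ in (FV3) (your $c=\va$ comparison yields only $[D,Y(a,\uz)]=\tfrac{d}{dz}Y(a,\uz)$), and the final locality step should invoke the $D\leftrightarrow\dD$-mirror of the commutator formula applied to $b$ (using $Db=0$) rather than a literal exchange of $a$ and $b$, since the displayed identity presupposes $\dD$ of the first argument vanishes.
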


\begin{rem}
\label{rem_skew}
By the grading condition \(F_{h,\bar h}=0\) unless \(h-\bar h\in\mathbb Z\), $a(r,s)=0$ if $r-s \notin \Z$.
Thus, $Y(a,\uz)$ consists of $z^n\z^m|z|^r$ with $n,m\in \Z$ and $r\in \R$.
Hence, $Y(a,\uz)$ does not have the monodromy around $z=0$.
In particular, $Y(a,-\uz)$ is well-defined.
\end{rem}

An {\it energy-momentum tensor}
of a full vertex algebra is a pair of vectors
$\om \in F_{2,0}$ and $\omb\in F_{0,2}$ such that
\begin{enumerate}
\item
$\dD \om=0$ and $D \omb=0$;
\item
There exist scalars $c, \bar{c} \in \C$ such that
$\om(3,-1)\om=\frac{c}{2} \va$,
$\omb(-1,3)\omb=\frac{\bar{c}}{2} \va$ and
$\om(k,-1)\om=\omb(-1,k)\omb=0$
for any $k=2$ or $k\in \Z_{\geq 4}$.
\item
$\om(0,-1)=D$ and $\omb(-1,0)=\dD$;
\item
$\om(1,-1)|_{F_{h,\h}}=h$ and
$\omb(-1,1)|_{F_{h,\h}}=\h$ for any $h,\h\in \R$.
\item
There is $N \in \R$ such that $F_{h,\h}=0$ for any $h<N$ or $\h<N$;
\item
For any $H>0$,
$\sum_{h+\h<H} \dim F_{h,\h}<\infty$.
\end{enumerate}
Set 
\begin{align*}
L(n)=\om(n,-1) \quad \text{ and }\quad \Ld(n) = \omb(-1,n).
\end{align*}
We remark that $\{L(n)\}_{n\in \Z}$ and $\{ \Ld(n)\}_{n\in \Z}$
satisfy the commutation relation of the Virasoro algebra
and are mutually commute by Proposition \ref{prop_skew}.
A {\it full vertex operator algebra} is a pair of a full vertex algebra and its energy momentum tensor.

\begin{prop}\cite[Proposition 3.18]{M1}
Let $(F,\om,\omb)$ be a full vertex operator algebra.
Then, $\ker \Ld(-1)$ and $\ker L(-1)$ are subalgebra of $F$ and 
\begin{align*}
\ker \Ld(-1) \otimes \ker L(-1) \rightarrow F,\quad a\otimes b \mapsto a(-1,-1)b
\end{align*}
is a full vertex algebra homomorphism.
Moreover, $(\ker \Ld(-1),\om)$ and $(\ker L(-1),\omb)$ are vertex operator algebras.
\end{prop}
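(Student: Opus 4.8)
The plan is to verify the three assertions in turn: that $F_L:=\ker\Ld(-1)$ and $F_R:=\ker L(-1)$ are full vertex subalgebras of $F$; that $(F_L,\om)$ and $(F_R,\omb)$ are vertex operator algebras; and that the map $\rho\colon F_L\otimes F_R\to F$, $a\otimes b\mapsto a(-1,-1)b$, is a homomorphism of full vertex algebras. The only ingredients are the axioms of a full vertex operator algebra and Proposition~\ref{prop_skew} --- in particular translation invariance $[\dD,Y(a,\uz)]=Y(\dD a,\uz)=\partial_{\z}Y(a,\uz)$ and its $D$-analogue, and the commutation $[Y(a,\uz),Y(b,\uz)]=0$ whenever $\dD a=0$ and $Db=0$. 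Here $D=\om(0,-1)=L(-1)$ and $\dD=\omb(-1,0)=\Ld(-1)$ are the two translation operators, so $F_L=\ker\dD$ and $F_R=\ker D$.

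For the first assertion I would argue as follows. If $\dD a=0$, translation invariance gives $\partial_{\z}Y(a,\uz)=Y(\dD a,\uz)=0$, so $Y(a,\uz)=Y(a,z)=\sum_n a(n,-1)z^{-n-1}$ involves only holomorphic modes and each $a(n,-1)$ commutes with $\dD$; hence for $b\in\ker\dD$ one has $\dD\bigl(Y(a,z)b\bigr)=[\dD,Y(a,z)]b+Y(a,z)\dD b=0$, so every product $a(n,-1)b$ stays in $\ker\dD$. Since $\dD\va=\va(-1,-2)\va=0$ (because $Y(\va,\uz)=\mathrm{id}$) and $\dD\om=0$ by the energy--momentum axioms, $F_L$ contains both $\va$ and $\om$ and is a full vertex subalgebra; the symmetric computation handles $F_R$. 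For the second assertion, on $F_L$ the restricted operator is the holomorphic field $Y(a,z)=\sum_n a(n,-1)z^{-n-1}$ with $Y(a,z)b\in F_L((z))$, and axioms (V1)--(V4) of Definition~\ref{def_ZVA} drop out of (FV1)--(FV3) while (V5) is \eqref{eq_L0_cov}; $\om$ furnishes the conformal vector. The delicate point is that the induced $L(0)$-grading of $F_L$ should be by non-negative integers with finite-dimensional homogeneous pieces: I would deduce this from the structure of $F$ as a module over the global $\mathfrak{sl}_2\oplus\mathfrak{sl}_2$, using that a vector annihilated by the antiholomorphic raising operator $\Ld(-1)=\dD$, in a module whose $\Ld(0)$-spectrum is bounded below with finite weight multiplicities, has $\Ld(0)$-weight $0$, so that (FO1) forces the $L(0)$-weight to lie in $\Z$.

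For the third assertion, the crux is the identity
\begin{align}
Y\bigl(c(-1,-1)d,\uz\bigr)=Y(c,z)\,Y(d,\z)\qquad(c\in F_L,\ d\in F_R),\label{eq_key_plan}
\end{align}
which I would obtain by noting that $c(-1,-1)d$ is the coefficient of $z_0^{0}$ in the holomorphic series $Y(c,z_0)d$, then feeding this into the associativity (iterate) part of (FV3): that axiom identifies $Y\bigl(Y(c,z_0)d,\uz_2\bigr)$ with $Y(c,z_0+z_2)\,Y(d,\uz_2)$ after expansion in the region $|z_2|>|z_0|$, and extracting the $z_0^{0}$-coefficient collapses $Y(c,z_0+z_2)$ to $Y(c,z_2)$; since $Y(c,z)$ and $Y(d,\z)$ commute (Proposition~\ref{prop_skew}) there is no singular correction term, so the correlation-function identity upgrades to the operator identity \eqref{eq_key_plan}. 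Granting this, $F_L\otimes F_R$ is a full vertex algebra with $Y_{F_L\otimes F_R}(a\otimes b,\uz)(a'\otimes b')=\bigl(Y(a,z)a'\bigr)\otimes\bigl(Y(b,\z)b'\bigr)$ and vacuum $\va\otimes\va$ (the tensor product of a chiral and an anti-chiral vertex operator algebra), one has $\rho(\va\otimes\va)=\va(-1,-1)\va=\va$ by (FV1), and the remaining compatibility
\begin{align*}
Y\bigl(\rho(a\otimes b),\uz\bigr)\,\rho(a'\otimes b')=Y(a,z)Y(b,\z)\bigl(a'(-1,-1)b'\bigr)=\rho\Bigl(\bigl(Y(a,z)a'\bigr)\otimes\bigl(Y(b,\z)b'\bigr)\Bigr)
\end{align*}
follows coefficient by coefficient from \eqref{eq_key_plan}, the commutativity of holomorphic with antiholomorphic fields, and the associativity already available inside $F_L$ and inside $F_R$.

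I expect the main obstacle to be \eqref{eq_key_plan}: morally it only records that there is no operator product singularity between a holomorphic and an antiholomorphic field, but making it precise means turning the correlation-function form of the associativity axiom (FV3) into an identity of formal series of operators, being careful about the mode extraction and about the legitimacy of the degenerate iterate formula at $z_0=0$. A secondary, more technical nuisance is pinning down the integrality and positivity of the gradings on $F_L$ and $F_R$ that are needed to call them vertex operator algebras in the strict sense of Definition~\ref{def_conformal_vector}.
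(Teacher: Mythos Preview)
This proposition is quoted from \cite[Proposition~3.18]{M1} and is not proved in the present paper, so there is no in-paper argument to compare yours against. Your plan is the standard one and is almost certainly what \cite{M1} does. One simplification: your key identity $Y(c(-1,-1)d,\uz)=Y(c,z)Y(d,\z)$ already follows from the iterate formula in Proposition~\ref{prop_skew} (take $n=-1$) combined with the commutativity $[Y(c,z),Y(d,\z)]=0$ recorded there, so you do not need to pass through the correlation-function axiom (FV3) and then worry about upgrading it to an operator identity.

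There is one genuine slip in the part you flagged as delicate. Your claim that a vector annihilated by $\Ld(-1)$, in a module whose $\Ld(0)$-spectrum is bounded below with finite multiplicities, must have $\Ld(0)$-weight $0$ is false: the highest-weight vector in any finite-dimensional $\mathfrak{sl}_2$-irreducible of highest weight $j>0$ is a counterexample. Boundedness below together with finite multiplicities only forces the $\langle\Ld(-1),\Ld(0),\Ld(1)\rangle$-submodule generated by $a\in\ker\Ld(-1)$ to be finite-dimensional, whence $\h\in\tfrac12\Z_{\ge 0}$; via (FO1) this gives $h\in\tfrac12\Z$, not $h\in\Z$. So neither the integrality nor the positivity of the $L(0)$-grading on $\ker\Ld(-1)$ follows from the mechanism you propose. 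Whether this is even required depends on exactly what ``vertex operator algebra'' means in \cite{M1} (the statement here does not say ``positive graded''), but if it is, you will need a different input than the bounded-below spectrum alone. You were right to isolate this as the sticking point; only your proposed resolution needs replacing.
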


Let $V$ be a positive graded vertex operator algebra.
\begin{dfn}
\label{dfn_LC_bulk}
We call a full vertex operator algebra $F$ {\it locally $C_1$-cofinite} over $V$ if 
there are $M_i, \overline{M}_i \in \Vmodf$ indexed by some countable set $I_c$ such that:
\begin{enumerate}
\item[LC1)]
$V$ is a subalgebra of $\ker L(-1)$ and $\ker \Ld(-1)$;
\item[LC2)]
$F$ is isomorphic to $\bigoplus_{i \in I_c} M_i\otimes \M_i$ as a $V\otimes V$-module;
\item[LC3)]
For any $i,j \in I_c$, there exists finite subset $I_c(i,j) \subset I_c$ such that:
\begin{align*}
Y(\bullet,\uz)\bullet \in \bigoplus_{i,j \in I_c} \bigoplus_{k \in I_c(i,j)} I\binom{M_k}{M_iM_j} \otimes I\binom{{\M}_k}{{\M}_i{\M}_j},
\end{align*}
where $I\binom{M_k}{M_iM_j}$ and $I\binom{{\M}_k}{{\M}_i{\M}_j}$ are the space of intertwining operators of $V$.
\end{enumerate}
\end{dfn}

Note that (LC2) implies that
\begin{align*}
Y(a,\uz)b \in \bigoplus_{k\in I_c(i,j)}M_k\otimes \M_k((z,\z,|z|^\R))
\end{align*}
for any $a\in M_i\otimes \overline{M}_i$ and $b \in M_j\otimes \overline{M}_j$.

Assume that $(F,Y)$ is a locally $C_1$-cofinite full vertex operator algebra.
For each $r \geq 2$ and $A \in \Tr_r^c$, we can define a parenthesized composition of the full vertex operator 
$Y_{A}(\bullet,\uz)$, similarly to Section \ref{sec_VOA}.
For any $a_{[r]}\in F^{\otimes r}$ and $u\in F^\vee$,
\begin{align}
\langle u,\exp(L(-1)z_{A}+\Ld(-1)\z_{A}) Y_A(\ar,\uzr)\rangle,\label{eq_cor_full}
\end{align}
 is a formal power series.
By local $C_1$-cofiniteness and Theorem \ref{thm_glue},  \eqref{eq_cor_full} is absolutely convergent to a holomorphic function on $U_A \times U_A \subset X_r(\C)$.
By (L2), $z$ and $\z$ in \eqref{eq_cor_full} are of the form $z^n\z^m (z\z)^r$ with $n,m\in \Z$ and $r\in \R$.
Hence, the restriction of \eqref{eq_cor_full} on \eqref{eq_UAc},
\begin{align*}
U_A^c \hookrightarrow \overline{U}_A \times \overline{U}_A,\quad z_{[r]} \mapsto (z_{[r]},\z_{[r]}),
\end{align*}
 is a single-valued real analytic function.


\begin{dfn}\label{def_pre_bulk}
A full vertex operator algebra $F$ is said to be {\it consistent} if the following conditions hold:
\begin{description}
\item[Convergence]
For any $u\in F^\vee$ and $\ar \in \Fr$,
$\langle u,\exp(L(-1)z_{A}+\Ld(-1)\z_{A}) Y_A(\ar,\uzr)\rangle$ is absolutely locally uniformly convergent to a 
holomorphic function on $U_A \times U_A \subset X_r(\C) \times X_r(\C)$.
Denote the restriction of this real analytic function
on $U_A^c$ by $C_A(u,\ar;\uzr)$,
which is a real analytic function on $X_r(\C)$.
\item[Compatibility]
There exists a family of linear maps
\begin{align*}
C_r:F^\vee\otimes \Fr \rightarrow C^\om(\Xr),\quad\text{ for }r\geq 2
\end{align*}
where $C^\om(\Xr)$ is the vector space of real analytic functions on $X_r(\C)$, such that:
\begin{align}
C_r(u,\ar;\zr)\Bigl|_{U_A^c}=C_A(u,\ar;\zr)
\label{eq_cor_def_bulk}
\end{align}
for any $\ar$ as real analytic functions on $U_A^c$.
\end{description}
\end{dfn}
\begin{rem}
\label{rem_ext_cor}
It is natural to extend the definition of $C_r:F^\vee\otimes \Fr \rightarrow C^\om(X_r(\C))$ to $r=0,1$ by
\begin{align*}
C_0:F^\vee \rightarrow \C,\quad u \mapsto \langle u,\va \rangle
\end{align*}
and
\begin{align*}
C_1:F^\vee \otimes F \rightarrow C^\om(X_1(\C)), \quad u \mapsto \langle u, \exp(L(-1)z+\Ld(-1)\z)a\rangle,
\end{align*}
where we think $z$ as the standard coordinate of $\C = X_1(\C)$.
\end{rem}
The functions \(\{C_r\}_{r\geq 0}\) are called \emph{correlation functions} in physics and
are among the basic physical quantities in quantum field theory.

\begin{thm}
\label{thm_bulk}
A locally $C_1$-cofinite full vertex operator algebra is consistent.
Moreover, the sequence of linear maps $\{C_r:F^\vee\otimes F^{\otimes r}\rightarrow C^\omega(\Xr)\}_{r=0,1,\dots}$ in \eqref{eq_cor_def_bulk} and Remark \ref{rem_ext_cor} satisfy the following conditions:
\begin{description}
\item[(Symmetry)]
For any $\si \in S_r$, $u \in F^\vee$ and $a_1,\dots,a_r \in F$,
\begin{align*}
C_r(u,a_1,\dots,a_r;z_1,\dots,z_r)=C_r(u,a_{\si(1)},\dots,a_{\si(r)};z_{\si(1)},\dots,z_{\si(r)}),
\end{align*}
where $S_r$ is the symmetric group.
\item[(Conformal covariance)]
\begin{align*}
C_r(u,L(-1)_i a_{[r]};z_{[r]}) &= \frac{d}{dz_i} C_r(u, a_{[r]};z_{[r]})\\
C_r(u,\Ld(-1)_i a_{[r]};z_{[r]}) &= \frac{d}{d\z_i} C_r(u, a_{[r]};z_{[r]})\\
C_r(L(-1)^* u,a_{[r]};z_{[r]}) &= \sum_{i=1}^r C_r(u, L(-1)_i a_{[r]};z_{[r]})\\
C_r(\Ld(-1)^* u,a_{[r]};z_{[r]}) &= \sum_{i=1}^r C_r(u, \Ld(-1)_i a_{[r]};z_{[r]})\\
C_r(L(0)^* u,a_{[r]};z_{[r]}) &= \sum_{i=1}^r C_r(u, (z_i\frac{d}{dz_i}+L(0)_i) a_{[r]};z_{[r]})\\
C_r(\Ld(0)^* u,a_{[r]};z_{[r]}) &= \sum_{i=1}^r C_r(u, (\z_i\frac{d}{d\z_i}+\Ld(0)_i) a_{[r]};z_{[r]})\\
C_r(L(1)^* u,a_{[r]};z_{[r]}) &= \sum_{i=1}^r C_r(u, (z_i^2\frac{d}{dz_i}+2z_iL(0)_i+L(1)_i) a_{[r]};z_{[r]})\\
C_r(\Ld(1)^* u,a_{[r]};z_{[r]}) &= \sum_{i=1}^r C_r(u, (\z_i^2\frac{d}{d\z_i}+2\z_i\Ld(0)_i+\Ld(1)_i) a_{[r]};z_{[r]})
\end{align*}
\item[(Vacuum property)]
For any $u \in F^\vee$ and $a_1,\dots,a_{r-1} \in F$,
\begin{align*}
C_{r}(u,a_1,\dots,a_{r-1},\va;z_1,\dots,z_r)=C_{r-1}(u,a_1,\dots,a_{r-1};z_1,\dots,z_{r-1}).
\end{align*}
\end{description}
\end{thm}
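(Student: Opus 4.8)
The plan is to reduce the statement to the action of the parenthesized braid operad on chiral conformal blocks (Theorem~\ref{thm_action}) together with the glueing of conformal blocks (Theorem~\ref{thm_glue}), invoking the full vertex operator algebra axioms only at the level of two and three points. First I would use local $C_1$-cofiniteness to split the data into a chiral and an anti-chiral part: by (LC2) one has $F\cong\bigoplus_{i\in I_c}M_i\otimes\M_i$ as a $V\otimes V$-module, and by (LC3) the full vertex operator is a locally finite sum of tensor products $I\binom{M_k}{M_iM_j}\otimes I\binom{\M_k}{\M_i\M_j}$ of (non-logarithmic) intertwining operators of $V$. By Proposition~\ref{prop_int} each chiral factor is a conformal block in $\CB_{M_{[0;2]}}(U_{12})$, and iterating the glueing maps $\comp_p$ of Theorem~\ref{thm_glue} according to the shape of a tree $A\in\Tr_r^c$, the parenthesized full vertex operator $Y_A(\ar,\uzr)$ paired with $u\in F^\vee$ becomes a finite sum of products $C\cdot\overline{C'}$ of glued chiral conformal blocks on $U_A$ and their complex conjugates. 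Each summand converges absolutely on $U_A\times U_A$ by Theorem~\ref{thm_glue}, and since by (FO1) (cf.\ Remark~\ref{rem_skew}) and the intertwining operator structure the full vertex operators carry no monodromy around $z_i=0$, the restriction along $U_A^c\hookrightarrow\overline{U}_A\times\overline{U}_A$, $z_{[r]}\mapsto(z_{[r]},\z_{[r]})$, is a single-valued real analytic function $C_A$ on $U_A^c$. This establishes the first assertion and the Convergence clause of Definition~\ref{def_pre_bulk}.

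The heart of the proof is Compatibility: producing a sequence $C_r\colon F^\vee\otimes F^{\otimes r}\to C^\omega(\Xr)$ with $C_r|_{U_A^c}=C_A$ for every $A$. Fixing a reference tree $A_0=1(2(\cdots(r-1,r)\cdots))$, I would propagate $C_{A_0}$ by analytic continuation along the paths $\ga(g)$ attached to $\CPaB(r)$-morphisms $g\colon A_0\to A$ in the chiral variable and their complex conjugates $\bar\ga(g)$ in the anti-chiral variable; this is legitimate because the base points $Q(A)$ lie in $X_r(\R)$. Two points then need checking. First, single-valuedness: the pure braid group $PB_r$ must act trivially on the full block. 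Since $PB_r$ is generated, up to conjugation in the groupoid $\CPaB(r)$, by classes $\sigma^2\in\Hom_{\CPaB(2)}((12),(12))$ inserted at internal vertices via operad composition with identities, and since $\comp_p$ intertwines the $\CPaB$-actions (Theorem~\ref{thm_glue}), the monodromy along such a generator reduces to the $\sigma^2$-monodromy of a two-point full block, which is the identity because the combined chiral $\otimes$ anti-chiral monodromy around the origin equals $e^{2\pi i(\Delta-\bar\Delta)}=1$ by spin integrality (FO1). Hence $C_{A_0}$ extends to a single-valued real analytic function $C_r$ on $\Xr$. Second, tree-matching: $\CPaB(r)$ is generated as an operad by the associator $\alpha\in\CPaB(3)$ and the braiding $\sigma\in\CPaB(2)$, so $g$ decomposes into such elementary moves inserted at internal vertices; at each move the induced change of parenthesized vertex operator is the associativity or commutativity relation of (FV3) (combined, for the $\sigma$-moves, with the skew-symmetry of Proposition~\ref{prop_skew}), which by (FV3) is implemented precisely by the analytic continuation in question, carried through the glueing by Theorem~\ref{thm_glue}. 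It follows that $C_r|_{U_A^c}=C_A$, which is consistency.

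The ``Moreover'' properties follow from the construction together with the uniqueness principle that two single-valued real analytic functions on the connected space $\Xr$ which agree on a nonempty open set coincide. Symmetry is the $S_r$-equivariance built into $\CPaB(r)$ and the glueing, which at the elementary level is skew-symmetry (Proposition~\ref{prop_skew}) and (FV3). The $L(-1)_i$ and $\Ld(-1)_i$ equations of conformal covariance come from the $\Dr$-module structure \eqref{eq_D_def} together with the translation-invariance relation $[D,Y(a,\uz)]=\frac{d}{dz}Y(a,\uz)$ of Proposition~\ref{prop_skew}; the $L(0)^*,L(1)^*,\Ld(0)^*,\Ld(1)^*$ equations are the global $\mathrm{sl}_2\oplus\mathrm{sl}_2$ conformal Ward identities, which one verifies near a base point from the energy-momentum tensor relations and then extends by real-analyticity. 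The vacuum property follows by choosing a tree in which the leaf labelled $\vac$ is the right child of its parent and using $Y(a,\uz)\vac=\exp(zD+\z\dD)a$ (a consequence of (FV1), (FV2) and skew-symmetry): the exponential reproduces exactly the translation appearing in $C_{r-1}$, and the already established tree-independence of $C_r$ upgrades this to the general identity.

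The step I expect to be the main obstacle is the single-valuedness in the second paragraph, that is, triviality of the $PB_r$-action on the full correlation function: although it is conceptually a clean operadic reduction to the two-point case, one must check carefully that every generator of $PB_r$ is realized by an elementary $\sigma^2$ inserted at an internal vertex and that the compatibility of the glueing with analytic continuation (Theorem~\ref{thm_glue}) genuinely reduces the global monodromy to that local one. The tree-matching step is the same mechanism applied to the operad generators $\alpha$ and $\sigma$ rather than to $PB_r$, so once the former is established it should pose no additional difficulty.
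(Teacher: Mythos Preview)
Your proposal is correct and follows essentially the same route as the paper: split via local $C_1$-cofiniteness into chiral and anti-chiral intertwining operators (Proposition~\ref{prop_int}), glue along trees (Theorem~\ref{thm_glue}), verify the operad generators $\sigma$ and $\alpha$ using skew-symmetry (Proposition~\ref{prop_skew}) and the full vertex algebra axiom (FV3), and propagate to all of $\CPaB(r)$ via Theorem~\ref{thm_action}/\ref{thm_SC_action}.

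The one organizational difference worth noting is that the paper does \emph{not} separate ``single-valuedness'' (triviality of $PB_r$) from ``tree-matching''. It formulates both at once as the single statement
\[
A(\gamma)\bigl(C_A(u,\ar,\uzr)\bigr)=C_B(u,\ar,\uzr)\quad\text{for every }\gamma\colon A\to B\text{ in }\CPaB(r),
\]
and checks it on the two operad generators $\sigma\colon(12)\to(21)$ and $\alpha\colon(12)3\to1(23)$; the general case follows because $\CPaB$ is generated by these and $\comp_p(C_A,C_B)=C_{A\circ_pB}$. This dissolves the obstacle you flag at the end: you never need to argue that $PB_r$ is generated by operad-inserted $\sigma^2$'s, since $PB_r=\Hom_{\CPaB(r)}(A_0,A_0)$ is just the special case $A=B$ of the displayed statement. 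Your $e^{2\pi i(\Delta-\bar\Delta)}=1$ argument for the two-point monodromy is correct but is subsumed by verifying $\sigma$ itself (which the paper does via skew-symmetry). For the vacuum property the paper places $\va$ at the \emph{outermost} slot and uses $Y(\va,\uz)=\mathrm{id}_F$ directly, rather than your right-child argument via $Y(a,\uz)\va=\exp(zD+\z\dD)a$; both work.
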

\begin{proof}
We will show this in the case that the index set $I$ in the definition of local $C_1$-cofiniteness is finite.
The general case can be shown in a similar way by (LC3).
Then, $F \in D= \Vmodf \boxtimes \Vmodf$.
By (LC3) and Proposition \ref{prop_int}, we can identify the full vertex operator as a full conformal block:
\begin{align*}
s_{12}(\langle \bullet, \exp( L(-1)z_2+ \Ld(-1)\z_2)Y(\bullet,\uz)\rangle) \in \CB_{F,F,F}^c(12).
\end{align*}

By Theorem \ref{thm_glue}, for any $A\in \Tr_r^c$, the composite full vertex operator defines a section
\begin{align*}
s_A(\langle \bullet,\exp(L(-1)z_{A}+\Ld(-1)\z_{A})Y_A(\bullet,\uzr)\rangle \in \CB_{F,F^{\otimes r}}^c(U_A^c).
\end{align*}
In particular, for any $u\in F^\vee$, $\langle u,\exp(L(-1)z_{A}+\Ld(-1)\z_{A}) Y_A(\ar,\uzr)\rangle$ absolutely convergent on $U_A \times U_A$, and thus $U_A^c$, and has analytic continuation to the possibly multi-valued real analytic function on $\Xr$.
To prove the single-valuedness, it suffices to show that for any path $\ga:A \to B$ in $\CPaB(r)$,
\begin{align}
A(\ga)\left(C_A(u,\ar,\uzr) \right) = C_B(u,\ar,\uzr).\label{eq_consistent_closed}
\end{align}
In the case of $r=2$, by the definition of a full vertex algebra, $\langle u, \exp(L(-1)z_2+\Ld(-1)\z_2)Y(a_1,\uz_{12})a_2\rangle$ is in $\C[z_1,z_2,\z_1,\z_2,|z_1-z_2|^\R]$.
In particular, it is a single-valued real analytic function.
Hence, it suffices to show that \eqref{eq_consistent_closed} holds for the path $\si:(12) \rightarrow (21)$ in $\CPaB(2)$ (see Fig \ref{fig_sigma}).

By Proposition \ref{prop_skew}, we have:
\begin{align*}
\langle u, Y(a_1,\uz_{12})a_2 \rangle= \langle u,\exp(L(-1)z_{12}+\Ld(-1)\z_{12}) Y(a_2, -\uz_{12})a_1 \rangle.
\end{align*}
Since the conformal weights of $F$ is bounded below, $\exp(L(-1)^*z_{12}+\Ld(-1)^*\z_{12}) \bra{u}$ is a finite sum.
By setting $u= \exp(L(-1)^*z_{2}+\Ld(-1)^*\z_{2})u'$, we have:
\begin{align*}
\langle u', \exp(L(-1)z_2+\Ld(-1)\z_2)Y(a_1,\uz_{12})a_2\rangle = \langle u', \exp(L(-1)z_1+\Ld(-1)\z_1)Y(a_2, -\uz_{12})a_1\rangle.
\end{align*}
Hence, we have:
\begin{align}
A(\si)\left(C_{12}(u,a_{[2]},\uz_{[2]}) \right) = C_{21}(u,a_{[2]},\uz_{[2]})
\label{eq_skew_closed}
\end{align}
and thus \eqref{eq_consistent_closed} hold for all the paths in $\CPaB(2)$ (see also Remark \ref{rem_skew}).

In the case of $r=3$, by the definition of a full vertex algebra, all of the following functions have analytic continuation to $X_3(\C)$
and coincide with each others;
\begin{align*}
s_{1(23)}&\left(\langle u, \exp(L(-1)z_3+\Ld(-1)\z_3)Y(a_1,\uz_{13})Y(a_2,\uz_{23})a_3\right)\\
s_{2(13)}&\left(\langle u, \exp(L(-1)z_3+\Ld(-1)\z_3)Y(a_2,\uz_{23})Y(a_1,\uz_{13})a_3\right)\\
s_{(12)3}&\left(\langle u, \exp(L(-1)z_3+\Ld(-1)\z_3)Y(Y(a_1,\uz_{12})a_2,\uz_{23})a_3\right).
\end{align*}
In particular, for $\al: (12)3 \rightarrow 1(23)$ in $\CPaB(3)$ in Fig. \ref{fig_alpha},
\begin{align}
A(\al)\left(C_{(12)3}(u,a_{[3]};z_{[3]})\right)=C_{1(23)}(u,a_{[3]};z_{[3]}).
\label{eq_assoc_closed}
\end{align}

Let us consider the general case:
By definition of $C_A$ and the glueing of conformal blocks, 
\begin{align*}
\comp_{p}(C_A, C_B) = C_{A \circ_p B}.
\end{align*}
Since $\CPaB$ is generated by $\al:(12)3 \rightarrow 1(23)$ and $\si:12 \rightarrow 21$ as an operad,
by Theorem \ref{thm_SC_action} and \eqref{eq_skew_closed} and \eqref{eq_assoc_closed},
\eqref{eq_consistent_closed} hold for all paths.

(Symmetry) is obvious by the construction of $C_r$'s.
For $i=1,\dots,r-1$,
\begin{align*}
C_r(u,L(-1)_i a_{[r]};z_{[r]}) &= \frac{d}{dz_i} C_r(u, a_{[r]};z_{[r]})\\
C_r(u,\Ld(-1)_i a_{[r]};z_{[r]}) &= \frac{d}{d\z_i} C_r(u, a_{[r]};z_{[r]})
\end{align*}
follows from Proposition \ref{prop_skew}.
Since
\begin{align*}
&\frac{d}{dz_r} C_r(u, a_{[r]};z_{[r]})\\
 &= \frac{d}{dz_r}
\langle u, \exp(L(-1)z_r+\Ld(-1)\z_r)Y(a_1,\uz_{1r})Y(a_2,\uz_{2r})\dots Y(a_{r-1}z_{r-1,r}) a_r  \rangle\\
&=\langle u, \exp(L(-1)z_r+\Ld(-1)\z_r)L(-1)Y(a_1,\uz_{1r})Y(a_2,\uz_{2r})\dots Y(a_{r-1}z_{r-1,r}) a_r  \rangle\\
&-\sum_{i=1}^{r-1}\frac{d}{dz_{i}} \langle u, \exp(L(-1)z_r+\Ld(-1)\z_r)Y(a_1,\uz_{1r})Y(a_2,\uz_{2r})\dots Y(a_{r-1}z_{r-1,r}) a_r  \rangle\\
&=\langle u, \exp(L(-1)z_r+\Ld(-1)\z_r)L(-1)Y(a_1,\uz_{1r})Y(a_2,\uz_{2r})\dots Y(a_{r-1}z_{r-1,r}) a_r  \rangle\\
&-\sum_{i=1}^{r-1}\langle u, \exp(L(-1)z_r+\Ld(-1)\z_r)Y(a_1,\uz_{1r})\dots [L(-1),Y(a_i,\uz_{ir})] \dots Y(a_{r-1}z_{r-1,r}) a_r  \rangle\\
&= \langle u, \exp(L(-1)z_r+\Ld(-1)\z_r)Y(a_1,\uz_{1r})Y(a_2,\uz_{2r})\dots Y(a_{r-1}z_{r-1,r}) L(-1)a_r  \rangle\\
&=C_r(u, L(-1)_r a_{[r]};z_{[r]}).
\end{align*}

Similarly, we have
\begin{align*}
C_r&(L(-1)^* u,a_{[r]};z_{[r]})\\
&=\langle u, L(-1)\exp(L(-1)z_r+\Ld(-1)\z_r)Y(a_1,\uz_{1r})Y(a_2,\uz_{2r})\dots Y(a_{r-1}z_{r-1,r}) a_r  \rangle\\
&=\sum_{k=1}^{r-1}\langle u, \exp(L(-1)z_r+\Ld(-1)\z_r)Y(a_1,\uz_{1r})\dots[L(-1), Y(a_k,\uz_{kr})]\dots Y(a_{r-1}z_{r-1,r}) a_r  \rangle\\
&+ \langle u, \exp(L(-1)z_r+\Ld(-1)\z_r)Y(a_1,\uz_{1r})Y(a_2,\uz_{2r})\dots Y(a_{r-1}z_{r-1,r}) L(-1)a_r  \rangle\\
&= \sum_{i=1}^r C_r(u, L(-1)_i a_{[r]};z_{[r]}).
\end{align*}

Before proving the remaining covariance identities, we prove the vacuum property.
Since $Y(\va,\uz) =\mathrm{id}_F$, we have:
\begin{align*}
&C_{r+1}(u,\va,a_1,\dots,a_{r}; z_0,z_1,\dots,z_r) |_{U_{0(1(\cdots (r-1 r)))}^c}\\
&=\langle u, \exp(L(-1)z_r+\Ld(-1)\z_r) Y(\va,\uz_{0r}) Y(a_1,\uz_{1r}) \dots Y(a_{r-1},\uz_{r-1r}) a_r\rangle\\
&=\langle u, \exp(L(-1)z_r+\Ld(-1)\z_r)Y(a_1,\uz_{1r}) \dots Y(a_{r-1},\uz_{r-1r}) a_r\rangle\\
&=C_{r}(u,a_1,\dots,a_{r};z_1,\dots,z_r)|_{U_{1(\cdots (r-1 r)))}^c}
\end{align*}
Hence, (Vacuum) holds.
We remark that, by (Vacuum),
\begin{align*}
C_{r+1}(a_1,\dots,a_r,\va;z_{[r+1]})|_{U_{1(2(\cdots (rr+1)))}}= \langle u, \exp(L(-1)z_{r+1}+\Ld(-1)z_{r+1}) Y(a_1,\uz_1)Y(a_2,\uz_2) \dots Y(a_r,\uz_{r})\va\rangle,
\end{align*}
is independent of $z_{r+1}$. Therefore, we can set $z_{r+1}=0$ and obtain
\begin{align}
\langle u, Y(a_1,\uz_1)Y(a_2,\uz_2) \dots Y(a_r,\uz_{r})\va\rangle
= C_{r}(a_1,\dots,a_r;z_{[r]})|_{|z_1|>\cdots >|z_r|} \label{eq_conv_region}
\end{align}
by Proposition \ref{rem_tree_region}.
\eqref{eq_conv_region} is more symmetric with respect to the indexes from $1$ to $r$ and is often computationally convenient.
By using \eqref{eq_conv_region}, we can similarly obtain the conformal covariance by
\begin{align*}
[L(1),Y(a,\uz)]&=Y((z^2L(-1)+2L(0)z+L(1) )a,\uz)\\
[\Ld(1),Y(a,\uz)]&=Y((\z^2\Ld(-1)+2\Ld(0)\z+\Ld(1))a,\uz)
\end{align*}
and 
\begin{align*}
[L(0),Y(a,\uz)]&=Y((zL(-1)+L(0) )a,\uz)\\
[L(0),Y(a,\uz)]&=Y((\z\Ld(-1)+\Ld(0))a,\uz)
\end{align*}
and $L(0)\va=\Ld(0)\va=L(1)\va=\Ld(1)\va =0$
(see \cite[Lemma 3.11]{M1}).
\end{proof}

\begin{rem}
Part of this theorem is first obtained by Huang and Kong \cite{HK1} based on the representation theory of a rational $C_2$-cofinite vertex operator algebra developed by Huang and Lepowsky (see \cite{HL}). They showed the above theorem for special trees in $\Tr^c$ when $V$ is a rational $C_2$-cofinite VOA.
\end{rem}

The following corollary follows from the argument in the above proof \eqref{eq_conv_region}:
\begin{cor}\label{cor_conv_region}
under the assumption of Theorem \ref{thm_bulk}, for any $u\in F^\vee$ and $a_i \in F$,
\begin{align*}
\langle u, Y(a_1,\uz_1)Y(a_2,\uz_2) \dots Y(a_r,\uz_{r})\va\rangle
\end{align*}
is absolutely convergent in $|z_1|>|z_2|>\cdots >|z_r|$ to $C_{r}(a_1,\dots,a_r;z_{[r]})$.
\end{cor}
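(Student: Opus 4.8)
The plan is to derive the statement directly from Theorem \ref{thm_bulk} via the ``extra vacuum'' device already used there to obtain \eqref{eq_conv_region}. First I would invoke the Vacuum property of Theorem \ref{thm_bulk}, inserting one further vector $\va$ at a new point $z_{r+1}$, which yields the identity of real analytic functions
\[
C_{r+1}(u,a_1,\dots,a_r,\va;z_1,\dots,z_r,z_{r+1}) = C_r(u,a_1,\dots,a_r;z_1,\dots,z_r);
\]
in particular the left-hand side is independent of $z_{r+1}$.

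Next I would specialise to the tree $A = 1(2(\cdots (r(r+1))\cdots)))\in \Tr_{r+1}^c$. By Proposition \ref{rem_tree_region} its region is $\overline{U}_A = \{\, |z_1-z_{r+1}|>|z_2-z_{r+1}|>\cdots>|z_r-z_{r+1}|>0 \,\}$, and the Convergence part of Theorem \ref{thm_bulk} says that the composite full vertex operator
\[
\langle u,\exp(L(-1)z_{r+1}+\Ld(-1)\z_{r+1})\, Y(a_1,\uz_{1,r+1})Y(a_2,\uz_{2,r+1})\cdots Y(a_r,\uz_{r,r+1})\va\rangle
\]
lies in $T_A^\conv$, hence converges absolutely and locally uniformly on $\overline{U}_A$ to $C_{r+1}(u,a_1,\dots,a_r,\va;z_{[r+1]})$ after restriction to $U_A^c$.

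Then I would set $z_{r+1}=0$. In the $A$-coordinate system of Section \ref{sec_model_C} the variable $z_A$ equals $z_{r+1}$ and, by the definition of $T_A$, occurs in this element only with non-negative integer exponents; so the specialisation of the series at $z_A=0$ is again a convergent element of the corresponding space, and moreover $(z_1,\dots,z_r,0)$ lies in $\overline{U}_A$ precisely when $|z_1|>|z_2|>\cdots>|z_r|>0$, i.e.\ the specialisation stays inside the domain of absolute convergence. Under $z_{r+1}=0$ the exponential factor becomes the identity and each argument $\uz_{i,r+1}$ becomes $\uz_i$, so the series reduces to $\langle u,Y(a_1,\uz_1)\cdots Y(a_r,\uz_r)\va\rangle$, and by the displayed identity its limit is $C_r(u,a_1,\dots,a_r;z_{[r]})$ on $\{|z_1|>|z_2|>\cdots>|z_r|>0\}$, which is the assertion.

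The one step that needs care is this last specialisation: one must check that $z_A$ really appears with non-negative integer powers only (so that setting $z_A=0$ neither destroys convergence nor fails to commute with the summation) and that $(z_1,\dots,z_r,0)$ is an interior point of $\overline{U}_A$ rather than a boundary point. Both follow at once from the explicit coordinates of Section \ref{sec_model_C} together with Proposition \ref{rem_tree_region}, so the only real content is the application of Theorem \ref{thm_bulk}.
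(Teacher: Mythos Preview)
Your argument is correct and is essentially the same as the paper's: the corollary is stated there as following from \eqref{eq_conv_region}, which is obtained in the proof of Theorem~\ref{thm_bulk} by exactly the ``extra vacuum at $z_{r+1}$, then set $z_{r+1}=0$'' trick together with Proposition~\ref{rem_tree_region}. If anything, your write-up is more careful than the paper's in distinguishing the variables $\uz_{i,r+1}$ from $\uz_i$ before the specialisation and in checking that $z_A=z_{r+1}$ occurs only with non-negative integer powers.
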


\begin{rem}
\label{rem_conv_exp}
Since $Y(a,\uz)\va=\exp(L(-1)z+\Ld(-1)\z)a$,
\begin{align*}
\langle u, &Y(a_1,\uz_1)Y(a_2,\uz_2) \dots Y(a_{r-1},\uz_{r-1})Y(a_r,\uz_r)\va \rangle \\
&=\langle u, Y(a_1,\uz_1)Y(a_2,\uz_2) \dots Y(a_{r-1},\uz_{r-1}) \exp(L(-1)z_r+\Ld(-1)\z_r)a_r\rangle\\
&=\langle u, \exp(L(-1)z_r+\Ld(-1)\z_r) Y(a_1,\uz_1-\uz_r)Y(a_2,\uz_2-\uz_r) \dots Y(a_{r-1},\uz_{r-1}-\uz_{r}) a_r\rangle,
\end{align*}
which is equal to $\langle u, \exp(L(-1)z_r+\Ld(-1)\z_r) Y(a_1,\uz_{1r})Y(a_2,\uz_{2r}) \dots Y(a_{r-1},\uz_{r-1,r}) a_r\rangle$ after the change of variables.
However, formal calculus alone cannot yield the convergence region in Corollary \ref{cor_conv_region}.
\end{rem}

\begin{rem}
\label{rem_OS}
The Osterwalder-Schrader axioms (OS axioms) are axioms that characterize quantum field theory using correlation functions \cite{OS1,OS2}.
Since $z\frac{d}{dz}-\z\frac{d}{d\z}$ generates the rotation of the complex plane $\C$,
the first and the second equations in (Conformal covariance) imply the Poincare invariance $\R^2 \rtimes \mathrm{SO}(2)$. The orthogonal group $\mathrm{SO}(2)$ appears here since $h_i-\h_i \in \Z$.
If we consider a full vertex operator superalgebra, i.e., $h_i-\h_i \in \frac{1}{2}\Z$, then $\mathrm{SO}(2)$ should be changed by $\mathrm{Spin}(2)$.
These invariance is nothing but the part of the OS axioms.
The commutativity of a full vertex operator algebra corresponds to (Symmetry) which is the locality condition of the  OS axioms.
It is noteworthy that we do not assume any unitarity condition, while the OS axioms does,
the Reflection positivity.
Thus, to prove that $\{C_r\}_{r\geq 0}$ satisfy the OS axioms,
it is obvious that we need to assume unitarity on $F$.

Another point to note is that in the OS axioms $\{C_r\}_{r\geq 0}$ are tempered distributions (with the linear growth condition), not real analytic functions.
Once these three, (Temperedness), (Reflection positivity) and (Linear growth) are shown, the Wightman field (quantum field on the Minkowski space $\R^{1,1}$) can be obtained.
Those are discussed in a joint work with M.S. Adamo and Y. Tanimoto in \cite{AMT}.
\end{rem}

We note that Theorem \ref{thm_bulk} is easily extended to the case where the VOAs appearing in the chiral and anti-chiral parts do not coincide. 
We will state the precise statement here. The proof is completely parallel.
\begin{thm}
\label{thm_bulk2}
Let $F$ be a full vertex operator algebra and $V,W$ positive graded vertex operator algebras.
Assume that there are $C_1$-cofinite $V$-modules $M_i$
and $C_1$-cofinite $W$-modules $\overline{M}_i$ indexed by some countable set $I$ such that:
\begin{enumerate}
\item
$V$ is a subalgebra of $\ker L(-1)$ and $W$ is a subalgebra of $\ker \Ld(-1)$;
\item
$F$ is isomorphic to $\bigoplus_{i \in I} M_i\otimes \M_i$ as a $V\otimes W$-module;
\item
For any $i,j \in I$, there exists finite subset $I(i,j) \subset I$ such that:
\begin{align*}
Y(\bullet,\uz)\bullet \in \bigoplus_{i,j \in I} \bigoplus_{k \in I(i,j)} I\binom{M_k}{M_iM_j} \otimes I\binom{{\M}_k}{{\M}_i{\M}_j},
\end{align*}
where $I\binom{M_k}{M_iM_j}$ and $I\binom{{\M}_k}{{\M}_i{\M}_j}$ are the space of intertwining operators of $V$ and $W$, respectively.
\end{enumerate}
Then, $F$ is consistent and $C_r$'s satisfy the properties in Theorem \ref{thm_bulk}.
\end{thm}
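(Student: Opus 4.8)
The plan is to run the proof of Theorem \ref{thm_bulk} essentially verbatim, the key observation being that in that argument the chiral ($V$-) and anti-chiral ($W$-) data never interact: they enter only through the tensor factorization of the full conformal block, $\CB^c(A)=\CB_{\Mr}(U_A)\otimes\CB_{\overline{M}_{[0;r]}}(U_A)$, and through operations---glueing and analytic continuation---that act on the two tensor factors separately. First I would reduce to the case that the index set $I$ is finite, exactly as in Theorem \ref{thm_bulk}; the general case follows from hypothesis (3) of the statement in the same way that (LC3) was used there. With $I$ finite one has $F\in\underline{V\text{-mod}}_{f.g.}\boxtimes\underline{W\text{-mod}}_{f.g.}$, and by hypothesis (3) together with Proposition \ref{prop_int}---applied once to $V$ and once to $W$---the full vertex operator $\langle\bullet,\exp(L(-1)z_2+\Ld(-1)\z_2)Y(\bullet,\uz)\rangle$ is identified with a section of $\CB_{F,F,F}^c(12)$, whose first tensor factor is a chiral conformal block of $V$-modules and whose second is a chiral conformal block of $W$-modules.

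Next I would invoke the glueing theorem (Theorem \ref{thm_glue}) and the lax $2$-morphism property (Theorem \ref{thm_SC_action}), which were proved for an arbitrary positive graded vertex operator algebra and are therefore applied once for $V$ and once for $W$. This produces, for each $A\in\Tr_r^c$, a section of $\CB_{F,F^{\otimes r}}^c(U_A^c)$, so that \eqref{eq_cor_full} converges absolutely and locally uniformly on $U_A\times U_A\subset X_r(\C)\times X_r(\C)$; its restriction to $U_A^c$ is the real analytic function $C_A(u,\ar;\uzr)$. For the compatibility statement (single-valuedness), I would again show that for every path $\ga\colon A\to B$ in $\CPaB(r)$ the analytic continuation carries $C_A$ to $C_B$; since $\CPaB$ is generated as an operad by $\sigma\colon 12\to 21$ and $\alpha\colon(12)3\to 1(23)$, Theorem \ref{thm_SC_action} reduces this to the cases $r=2$ and $r=3$. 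Here the crucial point is that the identities settling those two cases---skew-symmetry (Proposition \ref{prop_skew}) and the associativity axiom (FV3) of a full vertex algebra---are statements about $F$ itself and are insensitive to the splitting $V\otimes W$; moreover the monodromy along $\ga\times\bar{\ga}$ factors as the monodromy of the $V$-block along $\ga$ tensored with that of the $W$-block along $\bar{\ga}$, so nothing changes. Finally the assertions (Symmetry), (Conformal covariance) and (Vacuum property) are obtained by the same manipulations of the full vertex operator axioms as in Theorem \ref{thm_bulk}, which once more never refer to the chiral or anti-chiral vertex operator algebras individually.

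I do not expect a genuine obstacle. The one point that must be checked rather than simply copied is that the holonomicity and expansion results of Section \ref{sec_pre}---notably Proposition \ref{prop_coherence} and the glueing of conformal blocks---apply on the anti-chiral side; but this is immediate, since $W$ is a positive graded vertex operator algebra and each $\overline{M}_i$ lies in $\underline{W\text{-mod}}_{f.g.}$, so every result of \cite{M6} and of Section \ref{sec_pre} holds for $W$ word for word. In short, the proof is that of Theorem \ref{thm_bulk} with ``$V$'' reinterpreted throughout as ``$V$ on the chiral side and $W$ on the anti-chiral side''.
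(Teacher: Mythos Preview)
Your proposal is correct and matches the paper's approach exactly: the paper does not give a separate proof of Theorem \ref{thm_bulk2} but simply remarks that ``the proof is completely parallel'' to that of Theorem \ref{thm_bulk}, since the chiral and anti-chiral data are handled independently. Your observation that all the cited machinery (Proposition \ref{prop_coherence}, Theorem \ref{thm_glue}, Theorem \ref{thm_SC_action}) applies to any positive graded vertex operator algebra, and hence separately to $V$ and $W$, is precisely the content of that remark.
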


\subsection{Boundary OPE algebra}
\label{sec_boundary}
The algebra appearing on the boundary of 2d CFT is mathematically formulated in \cite{HK2}, which is called an {\it open-string vertex algebra}.
The necessary and sufficient conditions for constructing an open-string vertex algebra are described in \cite [Proposition 2.7]{HK2} under assumptions similar to (but slightly stronger) the local $C_1$-cofiniteness in the previous section.

In this section, we will introduce the boundary OPE algebra based on the bootstrap equation (see also \cite[Proposition 2.7]{HK2}). Then, we state and prove the consistency of the OPEs with respect to all trees $\Tr^o(0,s)$.

Let $V$ be a positively graded vertex operator algebra with the conformal vector $\om^o \in V_2$.
Let $F^o=\bigoplus_{h\in \R}F_{h}^o$ be a $V$-module such that 
$L^o(0)|_{F_{h}^o}=h \id_{F_{h}^o}$
for any $h\in \R$,
where 
\begin{align*}
Y(\om^o,z) = \sum_{n\in \Z}L^o(n)z^{-n-1}.
\end{align*}
We assume that:
\begin{itemize}
\item
For any $H\in \R$,
$\bigoplus_{h \leq H}F_{h}^o$ is finite-dimensional.
\end{itemize}
Set 
\begin{align*}
(F^o)^\vee =\bigoplus_{h \in\R} (F_{h}^o)^*,
\end{align*}
where $(F_{h}^o)^*$ is the dual vector space.

For a vector space $W$,
we denote by 
$W((z^\R))$ the subspace of $W[[z^\R]]$ spanned by the series $\sum_{r\in \R}a_r z^r$ satisfying
\begin{itemize}
\item
For any $H\in \R$,
$$\{r \in \R \;|\; a_{r}\neq 0 \text{ and }r \leq H \}$$
is a finite set.
\end{itemize}

A {\it boundary vertex operator} on $F^o$ with the chiral symmetry $V$ is a linear map
\begin{align*}
Y^o(\bullet, z):F^o \rightarrow \mathrm{End}(F^o)[[z^\R]],\; a\mapsto Y^o(a,z)=\sum_{r \in \R}a(r)z^{-r-1}
\end{align*}
such that:
\begin{align}
\begin{split}
[L^o(-1),Y^o(v,z)]&=\frac{d}{dz}Y^o(v,z)\\
[a(n),Y^o(v,z)]&= \sum_{k\geq 0}\binom{n}{k} Y^o( a(k)v,z)z^{n-k}\\
Y^o(a(n)v,z) &= \sum_{k\geq 0} \binom{n}{k}\left( a(n-k)Y^o(v,z)(-z)^{k} - Y^o(v,z)a(k)(-z)^{n-k}\right)
\label{eq_L0_open}
\end{split}
\end{align}
for any $a\in V$ and $v \in F^o$, i.e., $Y^o(\bullet,z)$ is an intertwining operator of type $\binom{V}{VV}$.
Applying $a =\om^o$ and $n=1$, we have:
\begin{align*}
[L^o(0),Y^o(v,z)]&= z\frac{d}{dz}Y^o(v,z)+ Y^o(L^o(0)a,z).
\end{align*}
Hence, similarly to the bulk case, 
$Y(a,z)b \in F^o((z^\R))$ and for $u \in (F_{h}^o)^*$ and $a_i \in F_{h_i}^o$ we have
\begin{align}
u(Y^o(a_1,z_1)Y^o(a_2,z_2)a_3) &\in z_2^{h_0-h_1-h_2-h_3} \C\Bigl(\Bigl(\left(\frac{z_2}{z_1}\right)^\R\Bigr)\Bigr),\label{eq_conv_rad2_open}\\
u(Y^o(Y^o(a_1,z_0)a_2,z_2)a_3) &\in z_2^{h_0-h_1-h_2-h_3}\C\Bigl(\Bigl(\left(\frac{z_0}{z_2}\right)^\R\Bigr)\Bigr).
\label{eq_conv_rad_open}
\end{align}

We consider the following assumptions (see \cite{HK2}):
\begin{dfn}\label{def_bfullVA}
Assume that the boundary vertex operator $(F^o,Y^o)$ with the chiral symmetry $V$ together with a distinguished vector $\va^o \in F_0^o$ satisfying the following conditions:
\begin{enumerate}
\item[bFV1)]
For any $a \in F^o$, $Y^o(a,z)\va^o \in F^o[[z]]$ and $\lim_{z \to 0}Y^o(a,z)\va = a(-1)\va=a$, $Y^o(\va,z)=\mathrm{id}_{F^o} \in \End F^o$;
\item[bFV2)]
For any $a_1,a_2,a_3 \in F^o$ and $u \in (F^o)^\vee$, \eqref{eq_conv_rad2_open} and \eqref{eq_conv_rad_open}
are absolutely convergent in $\{|z_1|>|z_2|\}$ and $\{|z_0|<|z_2|\}$, respectively, and 
there exists a real analytic function $\mu(z_1,z_2)$ on $\{(z_1,z_2)\in \R^2 \mid z_1> z_2>0\}$ such that
\begin{align}
u(Y^o(a_1,z_1)Y^o(a_2,z_2)a_3) &= \mu(z_1,z_2)|_{z_1>z_2>0},\nonumber \\
u(Y^o(Y^o(a_1,z_0)a_2,z_2)a_3) &= \mu(z_1,z_2)|_{z_2>z_1-z_2>0}, \label{eq_boundary_borcherds}
\end{align}
where we set $z_0=z_1-z_2$.
\end{enumerate}
\end{dfn}

\begin{prop}\label{prop_open_sub}
Let $(F^o,Y^o,\va^o)$ satisfy Definition \ref{def_bfullVA}. Then, for any $a \in V$, 
the vertex operator $Y^o(a(-1)\va^o,z)$ coincides with the vertex operator which gives the $V$-module structure on $F^o$. In particular, $Y^o(a(-1)\va^o,z) = \sum_{n\in \Z}a(n)z^{-n-1}$ and 
\begin{align*}
i_o: V \rightarrow F^o,\quad a\mapsto a(-1)\va^o
\end{align*}
satisfies $i_o(a(n)b)=i_o(a)(n)i_o(b)$ for any $a,b\in V$ and $n\in \Z$.
\end{prop}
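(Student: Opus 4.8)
The plan is to derive everything directly from the intertwining-operator relations \eqref{eq_L0_open} built into the definition of a boundary vertex operator, together with the two halves of the vacuum axiom (bFV1). Note that \eqref{eq_L0_open} asserts precisely that $Y^o(\bullet,z)$ obeys axiom (I3) of Definition \ref{def_int} with all three modules equal to $F^o$, the $V$-action being the given $V$-module structure on $F^o$.

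First I would specialize the iterate relation (the last line of \eqref{eq_L0_open}),
\[
Y^o(a(n)v,z)=\sum_{k\geq 0}\binom{n}{k}\bigl(a(n-k)\,Y^o(v,z)(-z)^{k}-Y^o(v,z)\,a(k)(-z)^{n-k}\bigr),
\]
to $v=\va^o$ and $n=-1$, using $Y^o(\va^o,z)=\mathrm{id}_{F^o}$ from (bFV1). Since $\binom{-1}{k}=(-1)^k$, the two families of terms collapse to $\sum_{k\geq 0}a(-1-k)z^{k}$ and $\sum_{k\geq 0}a(k)z^{-1-k}$, whose union is exactly $\sum_{n\in\Z}a(n)z^{-n-1}$. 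This proves $Y^o(a(-1)\va^o,z)=\sum_{n\in\Z}a(n)z^{-n-1}$, i.e.\ that $Y^o(a(-1)\va^o,z)$ coincides with the vertex operator implementing the $V$-module structure on $F^o$, which is the first assertion.

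Next I would apply the operator identity just obtained to the vector $\va^o$ itself; the creation property in (bFV1), applied to $c=a(-1)\va^o\in F^o$, forces $Y^o(c,z)\va^o\in F^o[[z]]$, hence $a(n)\va^o=0$ for all $a\in V$ and $n\geq 0$, so $\va^o$ is vacuum-like for the $V$-action. Granting this, I would use the first part to identify $i_o(a)(p)=a(p)$ as an operator on $F^o$, so that the claim $i_o(a(p)b)=i_o(a)(p)\,i_o(b)$ becomes the identity $a(p)\bigl(b(-1)\va^o\bigr)=\bigl(a(p)b\bigr)(-1)\va^o$ in $F^o$, valid for all $a,b\in V$ and $p\in\Z$. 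I would get this from the Borcherds iterate formula for the $V$-module $F^o$: expanding $(a(p)b)(-1)$ and applying it to $\va^o$, every summand in which a nonnegative mode of $a$ or of $b$ hits $\va^o$ vanishes, leaving only the single term $a(p)\,b(-1)\va^o$.

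I do not expect a genuine obstacle: the argument is a routine unwinding of the intertwining relations and the vacuum axiom. The only point needing a little care is that the matching in the first step is an exact equality rather than one up to a scalar normalization, and this is precisely what the explicit evaluation $\binom{-1}{k}=(-1)^k$ secures.
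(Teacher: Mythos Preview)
Your proposal is correct and follows essentially the same route as the paper: both specialize the iterate relation in \eqref{eq_L0_open} to $v=\va^o$, $n=-1$ with $Y^o(\va^o,z)=\mathrm{id}$ to obtain the first identity, and both deduce $i_o(a(n)b)=i_o(a)(n)i_o(b)$ by expanding $(a(n)b)(-1)\va^o$ via the Borcherds iterate formula for the $V$-module $F^o$ and killing all but the $k=0$ term using the vacuum-like property of $\va^o$. Your explicit intermediate step deriving $a(k)\va^o=0$ for $k\geq 0$ from (bFV1) is a helpful clarification of what the paper invokes more tersely as ``by (bFV1).''
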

\begin{proof}
By \eqref{eq_L0_open} and (bFV1), we have
\begin{align*}
Y^o(a(-1)\va^o,z) &= \sum_{k\geq 0} (-1)^{k}\left( a(-1-k)Y^o(\va^o,z)(-z)^{k} - Y^o(\va^o,z)a(k)(-z)^{-1-k}\right)\\
&=\sum_{n \in \Z} a(n)z^{-n-1}.
\end{align*}
Hence, for any $a,b\in V$ and $n\in \Z$,
$i_o(a)(n)i_o(b)= (a(-1)\va^o)(n) (b(-1)\va^o) = a(n)b(-1)\va^o$.
By the Borcherds identity as $V$-module and (bFV1),
\begin{align*}
(a(n)b)(-1)\va^o &=\sum_{k\geq 0}(-1)^k \binom{n}{k}\left( a(n-k)b(-1+k) \va^o - (-1)^n b(-1-k)a(k)\va^o \right)\\
&=a(n)b(-1)\va^o.
\end{align*}
\end{proof}
By the above proposition, we can identify $V$ as a subalgebra of $F^o$.
The following proposition is analogous to \cite[Proposition 3.1.19]{LL}: 
\begin{prop}
Let $(F^o,Y^o,\va^o)$ satisfy Definition \ref{def_bfullVA}. Then, 
\begin{align*}
Y^o(v,z)a = \exp(L(-1)^o z) Y^o(a,-z)v
\end{align*}
for any $v \in F^o$ and $a\in V \subset F^o$.
\end{prop}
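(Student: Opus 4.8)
The plan is to adapt the standard proof of skew-symmetry for vertex algebras, \cite[Proposition 3.1.19]{LL}, to the present module-theoretic setting. The only ingredients will be the relations \eqref{eq_L0_open}, the creation axiom (bFV1), and Proposition \ref{prop_open_sub}; notably (bFV2) is not used, so the identity is purely algebraic.

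First I would establish the creation formula $Y^o(w,z)\va^o=\exp(L^o(-1)z)\,w$ for all $w\in F^o$. This rests on $L^o(-1)\va^o=0$, which follows from the commutator formula in \eqref{eq_L0_open} applied to $a=\om^o$, $n=0$, $v=\va^o$: since $Y^o(\va^o,z)=\id$ the right-hand side collapses to $Y^o(\om^o(0)\va^o,z)=[\om^o(0),\id]=0$, whence $\om^o(0)\va^o=L^o(-1)\va^o=0$ by (bFV1). With this in hand, $f(z)=Y^o(w,z)\va^o\in F^o[[z]]$ satisfies $f(0)=w$ and, by the $L^o(-1)$-derivative relation in \eqref{eq_L0_open}, $f'(z)=[L^o(-1),Y^o(w,z)]\va^o=L^o(-1)f(z)$, so $f(z)=\exp(L^o(-1)z)\,w$.

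Since $V=i_o(V)\subset F^o$, it then suffices to prove the identity for $a=i_o(b)=b(-1)\va^o$ with $b\in V$. I would expand the left-hand side with the commutator formula in \eqref{eq_L0_open} at $n=-1$ (so $\binom{-1}{k}=(-1)^k$) and the creation formula,
\begin{align*}
Y^o(v,z)\,b(-1)\va^o=b(-1)\exp(L^o(-1)z)\,v-\sum_{k\geq 0}(-1)^k z^{-1-k}\exp(L^o(-1)z)\,b(k)v,
\end{align*}
which is a finite sum because $b(k)v=0$ for $k\gg 0$; and expand the right-hand side using that, by Proposition \ref{prop_open_sub}, $Y^o(b(-1)\va^o,-z)$ is the $V$-module vertex operator $\sum_{n\in\Z}b(n)(-z)^{-n-1}$,
\begin{align*}
\exp(L^o(-1)z)\,Y^o(b(-1)\va^o,-z)\,v=\sum_{n\in\Z}(-z)^{-n-1}\exp(L^o(-1)z)\,b(n)v.
\end{align*}
Splitting the last sum at $n=0$, the part $n\geq 0$ matches the $k$-sum above via $(-z)^{-n-1}=-(-1)^n z^{-n-1}$, and it remains to identify $b(-1)\exp(L^o(-1)z)v$ with the tail $\sum_{m\geq 0}(-1)^m z^m\exp(L^o(-1)z)\,b(-m-1)v$.

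That last identification reduces to the conjugation identity $\exp(-L^o(-1)z)\,b(-1)\,\exp(L^o(-1)z)=\sum_{m\geq 0}(-1)^m z^m\,b(-m-1)$, which I would obtain by iterating $[L^o(-1),b(n)]=-n\,b(n-1)$ (read off from the $L^o(-1)$-derivative relation in \eqref{eq_L0_open}), since this gives $(\ad L^o(-1))^m\bigl(b(-1)\bigr)=m!\,b(-1-m)$. Assembling the two expansions as an identity of formal series in $z^{\pm 1}$ yields $Y^o(v,z)a=\exp(L^o(-1)z)Y^o(a,-z)v$. I do not anticipate a genuine obstacle: the argument is entirely bookkeeping, and every series that occurs is well defined because the conformal weights of $F^o$ are bounded below and $b(k)v$ vanishes for large $k$. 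The points that call for care are the verification that $L^o(-1)\va^o=0$ and that the modes of $i_o(b)$ act as the $V$-module vertex operator — both furnished by Proposition \ref{prop_open_sub} together with \eqref{eq_L0_open} — and ensuring that the rearrangement of the $n$-sum against the $k$-sum is carried out over genuinely finite pieces, so that no convergence question arises.
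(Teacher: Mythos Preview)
Your proposal is correct and follows essentially the same route as the paper's proof: first establish the creation formula $Y^o(w,z)\va^o=\exp(L^o(-1)z)w$ via the differential equation, then compute $Y^o(v,z)\,b(-1)\va^o$ using the commutator relation in \eqref{eq_L0_open} together with the conjugation identity $\exp(-L^o(-1)z)\,b(-1)\,\exp(L^o(-1)z)=\sum_{m\geq 0}(-z)^m b(-m-1)$ coming from $[L^o(-1),b(n)]=-n\,b(n-1)$. The only cosmetic difference is that the paper obtains $L^o(-1)\va^o=0$ directly from $Y^o(\om^o,z)\va^o\in F^o[[z]]$ (via Proposition \ref{prop_open_sub} and (bFV1)) rather than through the commutator trick you use, but this does not affect the argument.
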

\begin{proof}
We first show the case of $a =\va^o$. 
By (bFV1), $Y(\om^o,z)\va^o \in F^o[[z]]$, which implies $L^o(n-1)\va^o=0$ for any $n \geq 0$.
By \eqref{eq_L0_open},
\begin{align*}
L(-1)^o Y^o(v,z)\va^o = \frac{d}{dz} Y^o(v,z)\va^o.
\end{align*}
Since $\lim_{z\to 0}Y^o(v,z)\va^o =v$, we can solve this differential equation inductively and get
\begin{align*}
Y^o(v,z)\va^o = \exp(L^o(-1)z)v.
\end{align*}
By \eqref{eq_L0_open}, we have:
\begin{align*}
Y^o(v,z)a(-1)\va^o &=-[a(-1), Y^o(v,z)]\va^o + a(-1)Y^o(v,z)\va^o\\
&=-[a(-1), Y^o(v,z)]\va^o + a(-1)\exp(L(-1)^o z)v\\
&=-\sum_{k \geq 0}\binom{-1}{k} Y^o(a(k)v,z)\va^o z^{-1-k} + \exp(L^o(-1)z) \sum_{k\geq 0}a(-k-1)v (-z)^{k}\\
&=\sum_{k \geq 0} \exp(L^o(-1)z)a(k)v (-z)^{-1-k} + \exp(L^o(-1)z) \sum_{k\geq 0}a(-k-1)v (-z)^{k}\\
&= \exp(L^o(-1)z)Y(a,-z)v
\end{align*}
where we use $[L^o(-1),a(n)] = -na(n-1)$ for any $n \in\Z$.
\end{proof}
 
\begin{rem}
\label{Rem_center}
Set
\begin{align*}
Z(F^o) = \{v\in &\bigoplus_{n\in\Z} F_n^o \mid Y^o(v,z) \in \End F^o[[z^\pm]],\\
&Y^o(v',z)v= \exp(L(-1)^o z)Y^o(v,-z)v' \text{ for any }v' \in F^o\},
\end{align*}
which is called a {\it meromorphic center} of $F^o$ in \cite{HK2}.
Without a priori assuming the existence of VOA, Huang and Kong showed from (bFV2) together with some assumptions on $Y^o(\bullet,z):F^o \rightarrow \End F^o[[z^\R]]$ that $Z(F^o)$ is a vertex algebra \cite[Theorem 2.3]{HK2}.
\end{rem}
 
%

\begin{dfn}
\label{def_LC_boundary}
We call $(F^o,Y^o,\va^o)$ with the chiral symmetry $V$ {\it locally $C_1$-cofinite} if there are $N_i \in \Vmodf$
 indexed by some countable set $I_o$ such that:
\begin{enumerate}
\item[bLC1)]
$F^o$ is isomorphic to $\bigoplus_{i \in I_o} N_i$ as a $V$-module;
\item[bLC2)]
For any $i,j \in I_o$, there exists finite subset $I_o(i,j) \subset I_o$ such that:
\begin{align*}
Y(\bullet,\uz)\bullet \in \bigoplus_{i,j \in I_o} \bigoplus_{k \in I_o(i,j)} I\binom{N_k}{N_iN_j}.
\end{align*}
\end{enumerate}
\end{dfn}

Recall that open leaves of a tree $E \in \Tr^o(0,s)$ are assumed to be ordered and
\begin{align*}
X_{0,s}(\HH) = \{z_1,\dots,z_s \in \R^s\mid z_1>z_2>\cdots >z_s\}.
\end{align*}
We can define the iterated vertex operator $Y_E^o(a_{[0,s]},z_{[0,s]})$ for $a_{[0,s]}\in (F^o)^{\otimes s}$ and each $E\in \Tr^o(0,s)$ exactly as in Section \ref{sec_full_VOA}.

\begin{dfn}\label{def_pre_boundary}
Let $(F^o,Y^o,\va^o)$ be the triple in Definition \ref{def_bfullVA} with a chiral symmetry $V$.
We call it consistent if the following properties are satisfied:
\begin{description}
\item[Convergence]
For any $u\in (F^o)^\vee$ and $a_{[0,s]} \in (F^o)^{\otimes s}$ and $E\in \Tr^o(0,s)$,
\begin{align*}
\langle u,\exp(L(-1)^oz_r)Y_E^o(a_{[0,s]},z_{[0,s]}\rangle,
\end{align*}
is absolutely locally uniformly convergent to a holomorphic function on $U_{\tilde{E}} \subset X_{s}(\C)$.
Denote the restriction of this analytic function on $U_E^o \subset X_{0,s}(\HH)$ by $C_E(u,a_{[0,s]};z_{[0,s]})$.
\item[Compatibility]
There exists a family of linear maps
\begin{align*}
C_{0,s}:(F^o)^\vee\otimes (F^o)^{\otimes s} \rightarrow C^\om(X_{0,s}(\HH))\quad \text{ for }s\geq 2
\end{align*}
such that:
\begin{align}
C_s(u,a_{[0,s]};z_{[0,s]})\Bigl|_{U_E^o} =C_E(u,a_{[0,s]};z_{[0,s]})
\label{eq_cor_def_open}
\end{align}
for any $a_{[0,s]} \in (F^o)^{\otimes s}$, $u\in (F^o)^\vee$ and $E \in \Tr^o(0,s)$ as real analytic functions.
\end{description}
\end{dfn}

The following theorem follows from the same argument as in Theorem \ref{thm_bulk}:
\begin{thm}
Let $(F^o,Y^o,\va^o)$ with a chiral symmetry $V$ be locally $C_1$-cofinite.
Then, $F^o$ is consistent. 
\end{thm}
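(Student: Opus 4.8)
The plan is to transcribe the proof of Theorem~\ref{thm_bulk} into the purely boundary ($r=0$) setting, where it is strictly simpler: since every tree in $\Tr^o(0,s)$ has its open leaves in the fixed left‑to‑right order, the relevant operad is generated by the associator $\al$ alone, with no commutativity $\si$; and since there are no bulk insertions there is no doubling of points, so $\tilde E=E$ in $\Tr_s$ and the chiral conformal blocks already live on $X_s(\C)$. First I would identify the boundary vertex operator with a conformal block: by the hypotheses (bLC1), (bLC2) of Definition~\ref{def_LC_boundary} together with Proposition~\ref{prop_int}, the components of $\langle\bullet,\exp(L^o(-1)z_2)Y^o(\bullet,z_{12})\bullet\rangle$ lie in $\bigoplus_{i,j}\bigoplus_{k\in I_o(i,j)}\CB_{N_k;N_i,N_j}(U_{12})$, i.e.\ they define a section of a finite direct sum of chiral conformal blocks. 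Iterating the glueing of conformal blocks (Theorem~\ref{thm_glue}) along a tree $E\in\Tr^o(0,s)$ then produces a section of $\bigoplus\CB_\bullet(U_{\tilde E})$ with $\tilde E=E$; this is exactly the assertion that $\langle u,\exp(L^o(-1)z_{r_{\tilde E}})Y_E^o(a_{[0,s]},z_{[0,s]})\rangle$ is absolutely, locally uniformly convergent on $U_{\tilde E}\subset X_s(\C)$, which is the \textbf{Convergence} half of Definition~\ref{def_pre_boundary}.

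Next I would pin down the restriction to $U_E^o\subset X_{0,s}(\HH)$. On the chamber $X_{0,s}(\HH)=\{z_1>\dots>z_s\}$ every coordinate function $x_v$ and $\zeta_e$ attached to $E$ takes positive real values (the combinatorics of $L,R$ forces $z_{L(v)}>z_{R(v)}$ there), so the $\log x_v$, $\log\zeta_e$ and complex powers occurring in $T_{\tilde E}^\conv$ are single‑valued on the chamber; hence the section above restricts to a genuine real‑analytic function $C_E$ on $U_E^o$. Moreover, since $X_{0,s}(\HH)$ is convex, hence contractible, and sits inside $X_s(\R)\subset X_s(\C)$ away from the diagonals, and since $\CB$ is a locally constant sheaf of finite rank (Proposition~\ref{prop_coherence}), $C_E$ extends by analytic continuation along chamber paths to a single‑valued real‑analytic function on all of $X_{0,s}(\HH)$.

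For \textbf{Compatibility}, fix the right‑combed tree $E_0=1(2(\cdots(s-1,s)\cdots))\in\Tr^o(0,s)$ and set $C_{0,s}$ equal to the extension of $C_{E_0}$ just constructed. It remains to show $C_{0,s}|_{U_E^o}=C_E$ for every $E$, equivalently that the unique morphism $E_0\to E$ of the groupoid $\PaPB^o(0,s)\subset\Pi_1(X_{0,s}(\HH))$ is sent by $\CB^o$ to the continuation carrying $C_{E_0}$ to $C_E$. As in the closed case this reduces to the generators. For $s=2$ there is only the tree $12$, $T_{12}$ is a polynomial, and there is nothing to prove. For $s=3$ the only non‑identity generator is $\al\colon(12)3\to 1(23)$, and axiom (bFV2) of Definition~\ref{def_bfullVA}, i.e.\ \eqref{eq_boundary_borcherds}, says precisely that $Y^o_{(12)3}$ and $Y^o_{1(23)}$ are the two expansions of one real‑analytic function $\mu$ (after conjugating the last insertion from the origin to $z_3$ by $\exp(L^o(-1)z_3)$); thus $\CB^o(\al)(C_{(12)3})=C_{1(23)}$, the exact analogue of \eqref{eq_assoc_closed}. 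Since the operad $\{\Tr^o(0,s)\}_s$ is free on a single binary operation, every morphism of every $\PaPB^o(0,s)$ is an operadic composite of copies of $\al$ and identities, while the glueing satisfies $\comp_p(C_E,C_F)=C_{E\circ_p F}$ by construction; so by the operadic compatibility of glueing with analytic continuation (Theorem~\ref{thm_SC_action}, used exactly as at the end of the proof of Theorem~\ref{thm_bulk}) the identity $\CB^o(\al)(C_{(12)3})=C_{1(23)}$ propagates to $\CB^o$ of every morphism. This yields \eqref{eq_cor_def_open} and hence consistency of $(F^o,Y^o,\va^o)$.

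I expect the main obstacle to be largely illusory: the essential analytic and combinatorial content — that glueing of chiral conformal blocks commutes with analytic continuation along the operadic structure — is already packaged in Theorem~\ref{thm_SC_action}, and here one only needs to confirm it survives the $r=0$ specialization, which is immediate. The one point that genuinely differs from the closed case is the passage from the a priori multivalued chiral block on $X_s(\C)$ to a single‑valued real‑analytic correlator on the ordered real chamber $X_{0,s}(\HH)$; this is harmless because positivity of the coordinates $x_v,\zeta_e$ on that chamber annihilates all the logarithmic and fractional‑power monodromies, so no choice of branch is needed.
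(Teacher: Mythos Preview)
Your proposal is correct and follows essentially the same approach the paper intends: the paper simply states that the theorem ``follows from the same argument as in Theorem~\ref{thm_bulk}'' without writing out a separate proof, and your transcription of that argument to the $r=0$ boundary setting---dropping the braiding generator $\sigma$ and retaining only the associator $\alpha_o$, with (bFV2) playing the role of (FV3)---is precisely what is meant. Your additional observation that the tree coordinates $x_v,\zeta_e$ are positive on the ordered chamber $X_{0,s}(\HH)$, so that no branch choices intervene, makes explicit a point the paper leaves implicit.
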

Note that $C_s$ does not have the symmetry of the permutation group $S_s$.

\subsection{Bulk-boundary OPE algebra}
\label{sec_bulk_boundary}

The algebra appearing on the bulk-boundary of 2d CFT is mathematically formulated in \cite{Ko1}, which is called an {\it open-closed field algebra}.
The necessary and sufficient conditions for constructing an open-closed algebra are described in \cite[Theorem 1.28]{Ko1} under assumptions similar to (but slightly stronger) the local $C_1$-cofiniteness in the previous section.

We reformulate Kong's approach in the following two respects:
Kong considered the vertex operator $Y_{\mathrm{cl-op}}(\bullet,z,\z):F^c \otimes F^o \rightarrow F^o[[z^\R,\z^\R]]$ as the basic building block of the theory, but it is actually sufficient to give a bulk-boundary operator $\tau_y: F^c \rightarrow F^o[[y^\R]]$.
In Theorem \cite[Theorem 1.28]{Ko1}, six conditions were assumed as sufficient conditions to construct an open-closed field algebra, but in this section we will see that five are sufficient, and that they correspond exactly to the generator of $\PaPB$ as a 2-operad. This five conditions are known as genus 0 boundary bootstrap equations in physics \footnote{Fig. (9.a) in \cite{Le} corresponds to both the bulk commutativity and associativity, which we count as two.}{\cite[Conditions (a), (c), (d), (e) in Fig. 9]{Le}}.

Then, we state and prove the consistency of the OPEs with respect to all trees $\Tr^o(r,s)$ by using the result in Section \ref{sec_homSC}.

Let $V$ be a positive graded vertex operator algebra.
Let $(F^o,Y^o,\va^o)$ be the triple in Definition \ref{def_bfullVA} with the $V$-chiral symmetry
and $(F^c,Y^c,\va^c,\om,\omb)$ a full vertex operator algebra.
A $V$-chiral symmetry on $F^c$ is a vertex algebra homomorphism preserving the conformal vectors $i_r:V \hookrightarrow \ker L^c(-1)$ and $i_l:V \hookrightarrow\ker \Ld^c(-1)$.
Note that for the chiral symmetry we specify the three embedings of $V$ into $F^o,F^c$.
Hence, for a vertex operator algebra automophism $g \in \Aut V$, we think the twisted $V$-chiral symmetry,
e.g., $i_r \circ g:V \hookrightarrow \ker L(-1)$, as different ones, which is important when one considers D branes \cite{M8}.

To distinguish the three actions of $V$, set
\begin{align*}
a^l(n) = i_l(a)(n,-1),\quad a^r(n) = i_r(a)(-1,n),\quad a^o(n) = i_o(a)(n)
\end{align*}
for $a\in V$ and $n\in \Z$ (see also Proposition \ref{prop_skew} and Proposition \ref{prop_open_sub}).
We also denote $\om(n+1,-1)_l, \omb(-1,n+1)_r$ by $L^l(n), L^r(n)$, respectively.
\begin{dfn}\label{def_bb_operator}
A {\it bulk-boundary vertex operator} on $(F^c,F^o)$ with the chiral symmetry $V$ is a linear map
\begin{align*}
Y^b(\bullet,z) :F^c \rightarrow F^o[[z^\R]],\; v\mapsto Y^b(v,z)=\sum_{r \in \R}B_r(v)z^{-r-1}
\end{align*}
such that:
\begin{enumerate}
\item[BBC1)]
For any $v\in F^c$, $Y^b(L^l(-1)v,z) =\frac{d}{dz}Y^b(v,z)$;
\item[BBC2)]
For any $a\in V$, $v \in F^c$ and $n\in \Z$,
\begin{align}
\begin{split}
a^o(n)Y^b(v,z) &= Y^b(a^r(n)v,z) + \sum_{k\geq 0}\binom{n}{k} Y^b( a^l(k)v, z )z^{n-k}\\
Y^b(a^l(n)v,z) &= \sum_{k\geq 0} \binom{n}{k}\left( a^o(n-k)Y^b(v,z)(-z)^{k} - Y^b(a^r(k)v,z)(-z)^{n-k}\right)
\label{eq_L0_bb}
\end{split}
\end{align}
\end{enumerate}
%
\end{dfn}
Let $Y(\bullet,z):F^c \rightarrow F^o[[z^\R]]$ be a bulk-boundary vertex operator on $(F^c,F^o)$ with the chiral symmetry $V$.
Applying $a =\om^o$ and $n=1$, we have:
\begin{align*}
L^o(0)Y^b(v,z)&= z\frac{d}{dz}Y^b(v,z)+ Y^b((L^l(0)+L^r(0))v,z),
\end{align*}
which implies that
\begin{align*}
B_r(v) \in (F^o)_{h+\h-r-1}
\end{align*}
for $v \in F_{h,\h}$ and $r\in \R$.
Hence, $Y^b(v,z) \in F^o((z^\R))$.
Note that if $F^c$ is a direct sum of tensor products of $V$-modules then the above condition is equivalent to saying that $Y^b(\bullet,z)$ is a $V$-module intertwining operator.

In the above definition, the holomorphic and anti-holomorphic parts of $F^c$ are treated asymmetrically. However, making them asymmetric is inherently unnatural.
Therefore, we introduce a bulk-boundary operator
\begin{align*}
\tau_y:F^c \rightarrow F^o[[y^\R]]
\end{align*}
to treat them symmetrically. As we will see in the next proposition, these concepts are equivalent, and from the standpoint of dealing with vertex operators, $Y^b(\bullet,z)$ is more convenient.

Let $\tau_y:F^c \rightarrow F^o[[y^\R]]$ be a linear map defined by
\begin{align}
\tau_y(v) = \exp(-iyL(-1)^o) Y^b(v,2iy),\quad\quad\quad (v\in F^c)
\label{eq_def_tau}
\end{align}
where $y$ is a formal variable and $Y^b(v,2iy)$ is defined as
\begin{align*}
Y^b(v,2iy) = \sum_{r\in \R} 2^r \exp\left(\frac{\pi i r}{2}\right) B_r(v)y^r,
\end{align*}
i.e., we choose the branch of $\log(i)$ as $\frac{\pi i}{2}$.
Since $Y^b(v,z) \in F^o((z^\R))$, \eqref{eq_def_tau} is well-define, that is, each coefficient of $y$ is a finite sum.
\begin{prop}\label{prop_equiv_bb}
Let $Y^b(\bullet,z):F^c \rightarrow F^o[[z^\R]]$ be a linear map.
Then, $Y^b(\bullet,z)$ satisfies Definition \ref{def_bb_operator} if and only if 
the associated map $\tau_y$ in \eqref{eq_def_tau} satisfies the following conditions:
\begin{enumerate}
\item
For any $v\in F^c$,
$\frac{d}{dy}\tau_y(v)= i\tau_y((L^l(-1)-L^r(-1))v)$
\item
For any $a\in V$, $v\in F^c$ and $n\in \Z$, $m \in \Z_{\geq 0}$
\begin{align*}
a^o(m) \tau_y(v) &= \sum_{k \geq 0}
\binom{m}{k} \tau_y\left (a^l(k)(iy)^{m-k}+a^r(k)(-iy)^{m-k})v\right)\\
\tau_y(a^l(n)v) &= 
\sum_{k \geq 0}\binom{n}{k} \left(a^o(n-k) \tau_y(a) (-iy)^{n-k}
- \tau_y(a^r(k)a) (-2iy)^{n-k}\right)\\
\tau_y(a^r(n)v) &= 
\sum_{k \geq 0}\binom{n}{k}\left( a^o(n-k) \tau_y(v) (+iy)^{n-k}
- \tau_y(a^l(k)v) (+2iy)^{n-k}\right).
\end{align*}
\end{enumerate}
\end{prop}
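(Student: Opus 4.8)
The plan is to treat the relation \eqref{eq_def_tau}, $\tau_y(v)=\exp(-iyL^o(-1))Y^b(v,2iy)$, as an invertible change of variables between linear maps $F^c\to F^o[[z^\R]]$ and linear maps $F^c\to F^o[[y^\R]]$, and to transport the conditions (BBC1) and (BBC2) of Definition~\ref{def_bb_operator} through it. First I would record the inverse: solving \eqref{eq_def_tau} for $Y^b$ yields $Y^b(v,z)=\exp(\tfrac{z}{2}L^o(-1))\,\tau_{-iz/2}(v)$, with the branch of $\log i$ fixed as $\tfrac{\pi i}{2}$ as in the text. Since $Y^b(v,z)\in F^o((z^\R))$ and $L^o(-1)$ raises the $L^o(0)$-grading by one, each coefficient of $y^r$ in $\tau_y(v)$ is a finite $\C$-linear combination of the coefficients of $Y^b(v,z)$ and conversely, so this is a genuine bijection; hence it suffices to prove that $Y^b$ satisfies (BBC1) and (BBC2) if and only if the associated $\tau_y$ satisfies (1) and (2).

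For (BBC1), I would differentiate $\tau_y(v)=\exp(-iyL^o(-1))Y^b(v,2iy)$ in $y$, obtaining
\[
\tfrac{d}{dy}\tau_y(v)= -i\,L^o(-1)\,\tau_y(v)+2i\,\exp(-iyL^o(-1))\bigl(\tfrac{d}{dz}Y^b\bigr)(v,2iy).
\]
By (BBC1) the second term equals $2i\,\tau_y(L^l(-1)v)$. The bare $L^o(-1)$ in the first term is eliminated using the special case $a=\om$, $n=0$ of the first identity of (BBC2), namely $L^o(-1)Y^b(v,z)=Y^b\bigl((L^l(-1)+L^r(-1))v,z\bigr)$; conjugating it by $\exp(-iyL^o(-1))$ rewrites $-iL^o(-1)\tau_y(v)$ as $-i\,\tau_y\bigl((L^l(-1)+L^r(-1))v\bigr)$, and the two contributions combine to $i\,\tau_y\bigl((L^l(-1)-L^r(-1))v\bigr)$, which is (1). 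Running the same identities backwards (granting the equivalence of (BBC2) and (2) established below) gives the converse.

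The bulk of the argument is the translation of (BBC2). The key input is the conjugation formula
\[
\exp(-iyL^o(-1))\,a^o(m)=\Bigl(\,\sum_{j\ge 0}\binom{m}{j}(iy)^j\,a^o(m-j)\Bigr)\exp(-iyL^o(-1)),
\]
obtained by iterating the vertex-algebra commutator $[L^o(-1),a^o(m)]=-m\,a^o(m-1)$, which holds because $i_o$ is a vertex algebra homomorphism. Substituting $z=2iy$ into the two identities of (BBC2), conjugating both sides by $\exp(-iyL^o(-1))$, applying this formula, and resumming then yields the identities of (2); the nested binomial sums collapse by the binomial theorem, the bookkeeping being governed by the arithmetic $(iy)+(-2iy)=-iy$ and $(-iy)+(2iy)=iy$. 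The first identity of (BBC2) transforms into the first identity of (2); the second identity of (BBC2) transforms into the second identity of (2) (its $a^r$-term survives the conjugation). The remaining third identity of (2), the $a^r$-counterpart, I would derive from the first two identities of (BBC2) by the standard manoeuvre that produces the second half of the Borcherds identity in a vertex algebra --- solve the commutator identity for $Y^b(a^r(n)v,z)$, substitute the $a^l$-Borcherds identity, and resum --- and conversely check that (1) and (2) recover (BBC2); so the pair (BBC1), (BBC2) is equivalent to the pair (1), (2).

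The step I expect to be the main obstacle is precisely this reconciliation of the manifest left/right asymmetry of $Y^b(\bullet,z)$ --- which ``sees'' only the holomorphic translation $L^l(-1)$ in (BBC1) and carries only an $a^l$-Borcherds identity in (BBC2) --- with the left/right symmetry built into $\tau_y$, whose condition (2) lists separate $a^l$- and $a^r$-identities. Making this rigorous needs the formal fact that, given the commutator identity, either half of the Borcherds-type identity determines the other, together with careful tracking of the branch-dependent powers of $i$ introduced by the substitution $z=2iy$, so that all the equivalences hold as exact identities of formal power series rather than merely up to reindexing.
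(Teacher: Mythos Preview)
Your proposal is correct and follows essentially the same route as the paper: both differentiate $\tau_y$ and invoke the $n=0$ case of the first line of (BBC2) to obtain (1), and both translate (BBC2) into (2) via the conjugation identity $\exp(zL^o(-1))a^o(n)\exp(-zL^o(-1))=\sum_{j\ge 0}\binom{n}{j}(-z)^j a^o(n-j)$ followed by a binomial resummation. The only cosmetic differences are that the paper treats the cases $n\ge 0$ and $n<0$ of the second identity separately (using the two versions \eqref{eq_P_trans}, \eqref{eq_Y_trans} of the conjugation formula), and derives the third identity of (2) by conjugating the first line of (BBC2) directly rather than via the Borcherds-type manoeuvre you outline; the converse direction is dismissed in the paper with ``the same can be equally verified for the opposite direction,'' which your explicit inverse $Y^b(v,z)=\exp(\tfrac{z}{2}L^o(-1))\tau_{-iz/2}(v)$ makes precise.
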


\begin{proof}
Let $Y^b(\bullet,z):F^c \rightarrow F^o[[z^\R]]$ be a bulk-boundary vertex operator on $(F^c,F^o)$ with the chiral symmetry $V$. By (BBC1) and (BBC2), we have 
\begin{align*}
\frac{d}{dy}\tau_y(v) &= \frac{d}{dy}\exp(-iyL(-1)^o) Y^b(v,2iy)\\
&= -i\exp(-iyL(-1)^o) L(-1)^o Y^b(v,2iy) + \exp(-iyL(-1)^o) \frac{d}{dy}Y^b(v,2iy)\\
&= -i \exp(-iyL(-1)^o) Y^b((L(-1)^l+L(-1)^r) v,2iy) + \exp(-iyL(-1)^o) 2i Y^b(L(-1)^l v,2iy)\\
&=i \exp(-iyL(-1)^o) Y^b((L(-1)^l-L(-1)^r) v,2iy)\\
&=i\tau_y((L(-1)^l-L(-1)^r)v).
\end{align*}
Recall that 
\begin{align}
\exp(L(-1)z)a^o(n)\exp(-L(-1)z)&=\sum_{k\geq 0}^n \binom{n}{k} a^o(k)(-z)^{n-k}\label{eq_P_trans}
\\
\exp(L(-1)z)a^o(-n-1)\exp(-L(-1)z)&= \sum_{k\geq 0} \binom{k}{n} a^o(-k-1)z^{k-n},\label{eq_Y_trans}
\end{align}
for any $a \in V$ and $n \geq 0$ \cite[Lemma 1.11]{M6}.
Hence, for $a\in V$ and $n\geq 0$, by (BBC2) and \eqref{eq_P_trans}, we have
\begin{align*}
\tau_y(a^l(n)v) &=\exp(-iyL(-1)^o) Y^b(a^l(n)v,2iy)\\
&=\exp(-iyL(-1)^o)\sum_{k \geq 0}\binom{n}{k} \left(a^o(n-k) Y^b(v,2iy) (-2iy)^{k}- Y(a^r(k)v, 2iy) (-2iy)^{n-k}\right)\\
&=\exp(-iyL(-1)^o)\sum_{k \geq 0}\binom{n}{k} \left(a^o(n-k) Y^b(v,2iy) (-2iy)^{k}- Y(a^r(k)v, 2iy) (-2iy)^{n-k}\right)\\
&=\exp(iyL^o(-1)) a^o(n)\exp(-2iyL^o(-1))  Y^b(v,2iy)
-\sum_{k \geq 0}\binom{n}{k} \tau_y (a^r(k)v) (-2iy)^{n-k}\\
&=\sum_{k \geq 0}\binom{n}{k} a^o(k)\tau_y (v) (-iy)^{n-k} - \tau_y (a^r(k)v) (-2iy)^{n-k}.
\end{align*}
Similarly, by \eqref{eq_Y_trans} and $\binom{k+n}{n}=(-1)^k \binom{-n-1}{k}$, we have
\begin{align*}
&\tau_y(a^l(-n-1)v) \\
&=\exp(-iyL(-1)^o) Y^b(a^l(-n-1)v,2iy)\\
&=\exp(-iyL(-1)^o)\sum_{k \geq 0}\binom{-n-1}{k} \left(a^o(-n-1-k) Y^b(v,2iy) (-2iy)^{k}- Y(a^r(k)v, 2iy) (-2iy)^{-n-1-k}\right)\\
&=\exp(iyL(-1)^o)a^o(-n-1) \exp(-2iyL^o(-1)) Y^b(v,2iy)\\
&-\sum_{k \geq 0}\binom{-n-1}{k} \exp(-iyL(-1)^o)Y(a^r(k)v, 2iy) (-2iy)^{-n-1-k}\\
&=\sum_{k \geq 0} \binom{-n-1}{k} \left(a^o(-n-1-k)\tau_y(v) (-iy)^{k} - \tau_y(a^r(k)v)(-2iy)^{-n-1-k}\right).
\end{align*}
Hence, we have the second equality in (2). The last equality follows similarly.
Finally, for any $m \geq 0$, we have
\begin{align*}
a^o(m) \tau_y(v) &= a^o(m)\exp(-iyL(-1)^o) Y^b(v,2iy)\\
&=\exp(-iyL(-1)^o) \sum_{k\geq 0}^n \binom{m}{k} a^o(k)(-iy)^{m-k} Y^b(v,2iy)\\
&=\exp(-iyL(-1)^o) \sum_{k\geq 0}^n \binom{m}{k} (-iy)^{m-k} \left(Y^b(a^r(k)v,2iy)+ \sum_{l \geq 0}\binom{k}{l}Y^b(a^l(l)v,2iy)(2iy)^{k-l}\right).
\end{align*}
Since for formal variable $x,y$
\begin{align*}
\sum_{k,l\geq 0}\binom{m}{k}\binom{k}{l} x^{m-k}y^{k-l} =(1+x+y)^m,
\end{align*}
we have
\begin{align*}
\sum_{k\geq 0}^m &\sum_{l \geq 0}\binom{m}{k}\binom{k}{l}Y^b(a^l(l)v,2iy) (-iy)^{m-k}(2iy)^{k-l}\\
&=\sum_{l\geq 0} \binom{m}{l}Y^b(a^l(l)v,2iy) (iy)^{m-l}.
\end{align*}
Hence, $a^o(m) \tau_y(v) = \sum_{k \geq 0}\binom{m}{k} \tau_y\left (a^l(k)(iy)^{m-k}+a^r(k)(-iy)^{m-k})v\right)$.
The same can be equally verified for the opposite direction.
\end{proof}
\begin{rem}
\label{rem_bb_symmetric}
The above recursive relation is nothing but the defining formula of conformal block \eqref{eq_ker1} when the chiral module is inserted at the point $iy$ in the upper half-plane and the anti-chiral is at $-iy$ in the lower half-plane.
\end{rem}

Let $u \in (F^o)^*$, $a\in F^c$ and $b \in F^o$.
We will consider a correlation function with $a$ inserted at a point $z_1\in \uH$ and $b$ inserted at a point $z_2\in \R$ (see Fig \ref{fig_BB3_12} and \ref{fig_BB3_21}).
\begin{figure}[h]
\begin{minipage}[l]{.2\textwidth}
\centering
\begin{forest}
for tree={
  l sep=20pt,
  parent anchor=south,
  align=center
}
[
[[\bf{1}][$\bar{\bf{1}}$]]
[2]
]
\end{forest}
\end{minipage}
\begin{minipage}[l]{.25\textwidth}
    \includegraphics[width=2.5cm]{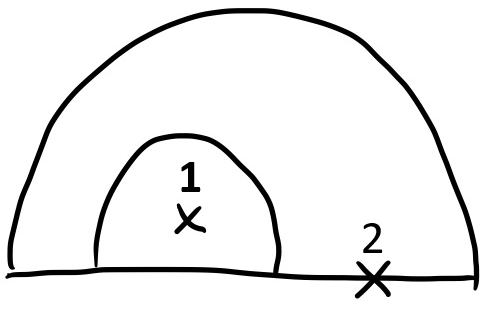}
    \caption{$\tau({\bf1})2$}\label{fig_BB3_12}
\end{minipage}
\begin{minipage}[r]{.2\textwidth}
\centering
\begin{forest}
for tree={
  l sep=20pt,
  parent anchor=south,
  align=center
}
[
[2]
[[\bf{1}][$\bar{\bf{1}}$]]
]
\end{forest}
\end{minipage}
\begin{minipage}[r]{.25\textwidth}
    \includegraphics[width=2.5cm]{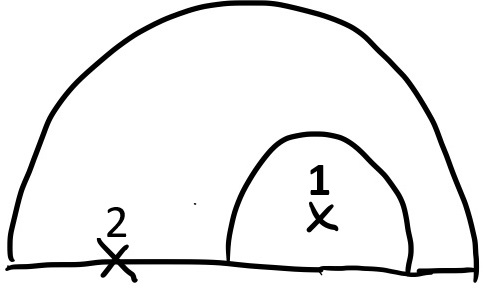}
    \caption{$2\tau({\bf1})$}\label{fig_BB3_21}
\end{minipage}
\end{figure}

 This correlation function is a single-valued real analytic function on $(z_1,z_2) \in \uH \times \R$.
Roughly speaking, the expansion of this correlation function in the domain $\{(z_1,z_2) \in \uH \times \R\mid \mathrm{Re}z_1 >z_2\text{ and }|z_1-\z_1|<|\z_1-z_2|\}$ (Fig \ref{fig_BB3_12}) is given by
\begin{align}
\langle u, \exp(z_2 L^o(-1))Y^o(Y^b(a,z_{1\bar{1}}),z_{\bar{1}2})b \rangle \in  \C\Bigl(\Bigl(\left(\frac{z_{1\bar{1}}}{z_{\bar{1}2}}\right)^\R\Bigr)\Bigr)[z_2,z_{\bar{1}2}^\R], \label{eq_BB_31}
\end{align}
and in the domain $\{(z_1,z_2) \in \uH \times \R\mid z_2 > \mathrm{Re}z_1 \text{ and } |z_1-\z_1|<|\z_1-z_2|\}$ (Fig \ref{fig_BB3_21}) is 
\begin{align}
\langle u, \exp(\z_{1}L^o(-1))Y^o(b,z_{2\bar{1}}) Y^b(a,z_{1\bar{1}})\rangle \in \C\Bigl(\Bigl(\left(\frac{z_{1\bar{1}}}{z_{2\bar{1}}}\right)^\R\Bigr)\Bigr)[\z_1,z_{2\bar{1}}^\R]. \label{eq_BB_32}
\end{align}
In order to consider them as analytic functions, the branch of $z^r=\exp( r\Log z)$ must be determined.
Recall that the branch of
\begin{align*}
\Log :\C^\cut = \C \setminus \R_{-} \rightarrow \C
\end{align*}
is taken so that $-\pi < \mathrm{Arg}(z) < \pi$. 

\eqref{eq_BB_31} and \eqref{eq_BB_32} are just formal series. We assume that they converge absolutely in $|z_{1\bar{1}}|<|z_{\bar{1}2}|$. Then, we obtain holomorphic functions on $U_{\tau({\bf1})2}, U_{2\tau({\bf1})} \subset X_3(\C)$, respectively.
Following \eqref{eq_UEo}, we substitute the complex number $z_1-z_{\bar{1}}$ to the variable $z_{1\bar{1}}$ and think of the vertex operator $Y^b(\bullet,z_{1\bar{1}})$ as follows:
\begin{align}
Y^b(a,z_{1\bar{1}}) &= \sum_{r\in \R} B_r(b) \exp(-(r+1)\Log(z_1-\z_1))\\
& = \sum_{r\in \R} B_r(b) (2\mathrm{Im}\,z_1)^{-r-1} \exp(-\frac{(r+1)\pi i}{2}) \label{eq_BB_branch}
\end{align}
and similar to $Y^o(\bullet,z_{\bar{1}2})$,
which uniquely determine the branches of \eqref{eq_BB_31} and \eqref{eq_BB_32}.
Hence, we obtain real analytic functions on $U_{\tau(1)2}^o, U_{2\tau(1)}^o \subset X_{1,1}(\HH)$.

Note that when taking the product of the boundary states, it is possible to get into the region $\R_- \subset \C$ which we cut for taking the branch, but this does not happen because we are taking the product and variables in the correct order when we expand the correlation function with respect to trees.

To state the definition of boundary bootstrap equation, we need to consider one more correlation function.
\begin{figure}[h]
\begin{minipage}[l]{.2\textwidth}
\centering
\begin{forest}
for tree={
  l sep=20pt,
  parent anchor=south,
  align=center
}
[
[[\bf{1}][$\bar{\bf{1}}$]]
[[\bf{2}][$\bar{\bf{2}}$]]
]
\end{forest}
\end{minipage}
\begin{minipage}[l]{.25\textwidth}
    \includegraphics[width=2.5cm]{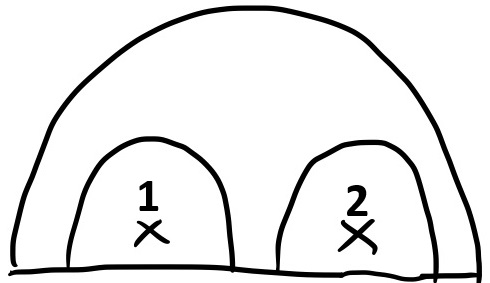}
    \caption{$\tau({\bf1})\tau({\bf2})$}\label{fig_BB2_a}
\end{minipage}
\begin{minipage}[r]{.2\textwidth}
\centering
\begin{forest}
for tree={
  l sep=20pt,
  parent anchor=south,
  align=center
}
[
[[\bf{1}][{\bf{2}}]]
[[$\bar{\bf{1}}$][$\bar{\bf{2}}$]]
]
\end{forest}
\end{minipage}
\begin{minipage}[r]{.25\textwidth}
    \includegraphics[width=2.5cm]{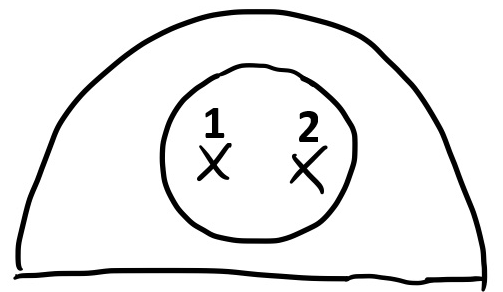}
    \caption{$\tau({\bf1 \bf 2})$}\label{fig_BB2_b}
\end{minipage}
\end{figure}

Let $u \in (F^o)^*$, $a_1,a_2 \in F^c$.
Fig \ref{fig_BB2_a} corresponds to the domain 
$\left|\frac{z_{1\bar{1}}}{z_{\bar{1}\bar{2}}}\right|+\left|\frac{z_{2\bar{2}}}{z_{\bar{1}\bar{2}}}\right|<1$ and $\mathrm{Re}\,z_1 > \mathrm{Re}\,z_2$, and the expansion is 
\begin{align}
\langle u, \exp(\z_2L^o(-1))Y^o(Y^b(a_1,z_{1\bar{1}}),z_{\bar{1}\bar{2}})Y^b(a_2,z_{2\bar{2}})\rangle
 \in  \C\Bigl(\Bigl(\left(\frac{z_{1\bar{1}}}{z_{\bar{1}\bar{2}}}\right)^\R, \left(\frac{z_{2\bar{2}}}{z_{\bar{1}\bar{2}}}\right)^\R\Bigr)\Bigr)[\z_2,z_{\bar{1}\bar{2}}^\R],
\label{eq_BB2_a}
\end{align}
and Fig \ref{fig_BB2_b} corresponds to the domain
$\left|\frac{z_{12}}{z_{2\bar{2}}}\right|+\left|\frac{\z_{12}}{z_{2\bar{2}}}\right|<1$
 and $\mathrm{Re}\,z_1 > \mathrm{Re}\,z_2$, and the expansion is 
\begin{align}
\langle u, \exp(\z_2L^o(-1)) Y^b(Y^c(a_1,z_{12},\z_{12})a_2,z_{2\bar{2}})\rangle
 \in  \C\Bigl(\Bigl(\left(\frac{z_{12}}{z_{2\bar{2}}}\right)^\R, \left(\frac{\z_{12}}{z_{2\bar{2}}}\right)^\R\Bigr)\Bigr)[\z_2,z_{2\bar{2}}^\R].
\label{eq_BB2_b}
\end{align}
We assume that \eqref{eq_BB2_a} converges absolutely in 
$\left|\frac{z_{1\bar{1}}}{z_{\bar{1}\bar{2}}}\right|+\left|\frac{z_{2\bar{2}}}{z_{\bar{1}\bar{2}}}\right|<1$
and \eqref{eq_BB2_b} converges absolutely in 
$\left|\frac{z_{12}}{z_{2\bar{2}}}\right|+\left|\frac{\z_{12}}{z_{2\bar{2}}}\right|<1$.
Thus, we obtain holomorphic functions on $U_{\tau(1)\tau(2)}, U_{\tau(12)} \subset X_4(\C)$, respectively.
By the restriction \eqref{eq_UEo}, we also obtain real analytic functions on $U_{\tau(1)\tau(2)}^o, U_{\tau(12)}^o \subset X_{2,0}(\HH)$.
By Fig \ref{fig_BB2_a} and \ref{fig_BB2_b}, it is clear that $U_{\tau(1)\tau(2)}^o \cap U_{\tau(12)}^o\neq \emptyset$.

The following assumption is called a {\it (boundary) bootstrap equation} in physics
(see also \cite[Theorem 1.28]{Ko1} and Remark 3.23):
\begin{dfn}\label{def_bulk_boundary}
We say that the bulk-boundary vertex operator $Y^b(\bullet,z)$ on $(F^c,F^o)$ with the chiral symmetry $V$ 
 satisfies the {\it bootstrap equation} when it satisfies the following conditions:
\begin{enumerate}
\item[BB1)]
$Y^b(\va^c,z)=\va^o$.
\item[BB2)]
For any $u \in (F^o)^*$, $a_1,a_2 \in F^c$,
\eqref{eq_BB2_a} converges absolutely in $\left|\frac{z_{1\bar{1}}}{z_{\bar{1}\bar{2}}}\right|+\left|\frac{z_{2\bar{2}}}{z_{\bar{1}\bar{2}}}\right|<1$ and \eqref{eq_BB2_b} converges absolutely in $\left|\frac{z_{12}}{z_{2\bar{2}}}\right|+\left|\frac{\z_{12}}{z_{2\bar{2}}}\right|<1$, and thus, define holomorphic functions on $U_{\widetilde{\tau(1)\tau(2)}}, U_{\widetilde{\tau(12)}} \subset X_{4}(\C)$, respectively.
Moreover, there is a single valued real analytic function $C_{2,0}(u,a_1,a_2)$ on $X_{2,0}(\HH)$ such that
the restriction of $C_{2,0}(u,a_1,a_2)$ on $U_{\tau(1)\tau(2)}^o \subset X_{2,0}(\HH)$ (resp. $U_{\tau(12)}^o \subset X_{2,0}(\HH)$) coincides with \eqref{eq_BB2_a} (resp. \eqref{eq_BB2_b}) for the branch specified above.
\item[BB3)]
For any $u \in (F^o)^*$, $a\in F^c$ and $b \in F^o$,
\eqref{eq_BB_31} and \eqref{eq_BB_32} converge absolutely in $|z_{1\bar{1}}|<|z_{\bar{1}2}|$,
and thus, define holomorphic functions on $U_{\widetilde{\tau(1)2}}, U_{\widetilde{2\tau(1)}} \subset X_3(\C)$, respectively.
Moreover, there is a single valued real analytic function $C_{1,1}(u,a,b)$ on $X_{1,1}(\HH)$ such that
the restriction of $C_{1,1}(u,a,b)$ on $U_{\tau(1)2}^o \subset X_{1,1}(\HH)$ (resp. $U_{2\tau(1)}^o \subset X_{1,1}(\HH)$) coincides with \eqref{eq_BB_31} (resp. \eqref{eq_BB_32}).
\end{enumerate}
\end{dfn}

%

\begin{rem}
\label{rem_kong}
(BB2) corresponds to Associativity II in \cite[(1.56)]{Ko1}, and (BB3) corresponds to Commutativity I in \cite[Proposition 1.18]{Ko1}. Associativity I in \cite[(1.52)]{Ko1} follows from (BB2) and (BB3) under the assumption that the chiral symmetry is locally $C_1$ cofinite, as we will see below.
\end{rem}

\begin{rem}
\label{rem_non_com}
It is important to note that condition (BB3) does not necessarily mean that the image of $Y^b(\bullet,z)$ in $F^o$ is commutative.
The state of the image is commutative only when it goes around the upper half-plane, otherwise it would not be interchangeable. In a categorical-theoretic setting, this corresponds to considering the left / full center instead of the usual center of the algebra in the modular tensor category (see \cite{FFRS, KR}).
We can also see it explicitly in our next paper, which constructs the bulk-boundary operator of the Narain CFTs \cite{M8}.
\end{rem}

%

Let $(F^o,F^c)$ possesses a $V$-chiral symmetry.
Let $M,\bar{M}$ be $V$-modules
and $M\otimes \bar{M} \hookrightarrow F^c$ a $V\otimes V$-module homomorphism.
Then, it is clear that the restriction of $Y^b(\bullet,z)$ on $M\otimes \bar{M}$ is an intertwining operator of type 
$\binom{F^o}{M \bar{M}}$.

\begin{dfn}\label{def_LC_BB}
We call a bulk-boundary vertex operator on $(F^c,F^o)$ with the chiral symmetry $V$ is
 {\it locally $C_1$-cofinite} if both $(F^c,Y^c,\va^c)$ and $(F^o,Y^o,\va^o)$ are locally $C_1$-cofinite over $V$ (Definition \ref{dfn_LC_bulk} and \ref{def_LC_boundary}) such that:
\begin{enumerate}
\item[bbLC)]
For each $i \in I_c$ in \eqref{dfn_LC_bulk}, there is a finite subset $B(i) \subset I_o$  such that:
\begin{align*}
Y^b(m_i\otimes \bar{m}_i, z) \in \bigoplus_{j \in B(i)}N_j((z^\R))
\end{align*}
with $I_o$ and $N_j$ in \eqref{def_LC_boundary}.
\end{enumerate}
\end{dfn}

Let $(F^o,F^c,Y^o,Y^c,Y^b)$ be a bulk-boundary operator which is $C_1$-cofinite over $V$
and satisfies the boundary bootstrap equation (Definition \ref{def_bulk_boundary}).
We will explain how to assign the iterated vertex operators $Y_E$ for a tree $E \in \Tr^o(r,s)$.
Let $a_{[r,s]} \in (F^c)^{\otimes r} \otimes (F^o)^{\otimes s}$.
As an example, we will consider the case where the tree is Fig \ref{fig_open_closed_tree}:

\begin{minipage}[l]{.6\textwidth}
\begin{forest}
for tree={
  l sep=20pt,
  parent anchor=south,
  align=center
}
[$z_{\bar{{\bf3}}\bar{{\bf2}}}$
[$z_{{\bf3}\bar{{\bf3}}}$[$z_{{\bf43}}$,edge label={node[midway,left]{$\tau$}}[[{\bf4}]][$z_{{\bf13}}$[{{\bf1}}][{{\bf3}}]]] [$z_{\bar{{\bf4}}\bar{{\bf3}}}$,edge label={node[midway,right]{$\tau$}}[[$\bar{{\bf4}}$]][$z_{\bar{{\bf1}}\bar{{\bf3}}}$[$\bar{{\bf1}}$][$\bar{{\bf3}}$]]]]
[$z_{{5}\bar{{\bf2}}}$[[{ 5}]]
[$z_{{\bf2}\bar{{\bf2}}}$[{{\bf2}},edge label={node[midway,left]{$\tau$}}][$\bar{ {\bf2}}$,edge label={node[midway,right]{$\tau$}}]]]
]
\end{forest}
\captionof{figure}{}
\label{fig_open_closed_tree}
\end{minipage}
\begin{minipage}[r]{.3\textwidth}
\begin{align*}
\tau\left({\bf4} \cdot_c ({\bf1} \cdot_c {\bf3})\right) \cdot_o \left(5 \cdot_o \tau({\bf 2})\right)\in \Tr_{4,1}^o
\end{align*}
\end{minipage}

We may assume that $a_i = a_i \otimes \bar{a}_i \in F^c$, where $a_i \otimes \bar{a}_i$ is taken from a direct summand of (LC2).
By assumption, 
we can locally decompose $Y^c = Y^l \otimes Y^r$ by (LC3), 
and we can denote $Y^b(a\otimes \bar{a},z)$ by $Y^b(a,z)\bar{a}$.
Then, for any $u\in (F^o)^\vee$,
\begin{align}
&\langle u,e^{L_o(-1)\z_{{2}}}Y^o\left(Y^b\left(Y^c(a_4\otimes \bar{a}_4,\uz_{43})Y^c(a_1\otimes \bar{a}_1,\uz_{13})a_3\otimes \bar{a}_{3}, z_{3\bar{3}}\right),z_{\bar{3}\bar{2}}\right) Y^o(a_5,z_{5\bar{2}})Y^b(a_2\otimes \bar{a}_2,z_{2\bar{2}})\rangle \label{eq_cor_bulk_boundary1}\\
&=\langle u,e^{L_o(-1)\z_{{2}}} 
Y^o\left( 
Y^b\left(Y^l(a_4,z_{43})Y^l(a_1,z_{13})a_3, z_{3\bar{3}}\right)Y^r(\bar{a}_4,z_{\bar{4}\bar{3}})Y^r(\bar{a}_1,z_{\bar{1}\bar{3}})\bar{a}_3,z_{3\bar{2}}\right)\label{eq_cor_bulk_boundary2}\\
& Y^o(a_5,z_{5\bar{2}})Y^b(a_2, z_{2\bar{2}} )\bar{a}_2)\rangle. \nonumber
\end{align}
The right-hand-side \eqref{eq_cor_bulk_boundary2} is a formal power series in $T_{\tilde{E}}$ and is absolutely convergent to a holomorphic function in $U_{\tilde{E}}$ by the assumption of local $C_1$-cofiniteness.
Then, the restriction of \eqref{eq_cor_bulk_boundary2} onto
\begin{align*}
U_E^o = \Phi^{-1}(\overline{U}_{\tilde{E}}) \cap X_{r,s}(\HH)
\end{align*}
is a well-defined real analytic function.
Denote the compositions of vertex operators \eqref{eq_cor_bulk_boundary1} by $Y_E$,
which in itself makes sense even if the vertex operator is not locally $C_1$ cofinite and does not decompose into chiral / anti-chiral parts by \eqref{eq_cor_bulk_boundary1}.
\begin{dfn}\label{def_pre_boundary}
Let $(F^c,F^o,Y^c,Y^o,Y^b,\va^c,\va^o)$ be in Definition \ref{def_bulk_boundary} with a chiral symmetry $V$.
We call it consistent if the following properties are satisfied:
\begin{description}
\item[Convergence]
For any $u\in (F^o)^\vee$ and $a_{[r,s]} \in (F^c)^{\otimes r} \otimes (F^o)^{\otimes s}$ and $E\in \Tr^o(r,s)$,
\begin{align*}
\langle u,\exp(L^o(-1)z_{E})Y_E(a_{[r,s]}; z_{[r,s]}\rangle,
\end{align*}
is absolutely locally uniformly convergent to a holomorphic function on $U_{\tilde{E}} \subset X_{2r+s}(\C)$.
Denote the restriction of this analytic function on $U_E^o$ by $C_E(u,a_{[r,s]};z_{[r,s]})$.
\item[Compatibility]
There exists a family of linear maps
\begin{align*}
C_{r}^c:(F_c)^\vee\otimes (F_c)^{\otimes r} \rightarrow C^\om(X_{r}(\C))
\end{align*}
and
\begin{align*}
C_{r,s}^o:(F_o)^\vee\otimes (F_c)^{\otimes r}\otimes (F_o)^{\otimes s} \rightarrow C^\om(X_{r,s}(\HH))
\end{align*}
such that:
\begin{align*}
C_r^c(u,a_{[r]};z_{[r]})\Bigl|_{U_A^c} =C_A(u,a_{[r]};z_{[r]})
\end{align*}
and
\begin{align}
C_{r,s}^o(u,a_{[r,s]};z_{[r,s]})\Bigl|_{U_E^o} =C_E(u,a_{[r,s]};z_{[r,s]})
\label{eq_cor_def_bb}
\end{align}
for any $A \in \Tr^c(r)$ and $E \in \Tr^o(r,s)$ as real analytic functions.
\end{description}
\end{dfn}

\begin{thm}\label{thm_bulk_boundary}
Let $(F^c,F^o,Y^c,Y^o,Y^b,\va^c,\va^o)$ be in Definition \ref{def_bulk_boundary} which is locally $C_1$-cofinite over a positive graded vertex operator algebra. Then, it is consistent.
\end{thm}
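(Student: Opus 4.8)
The plan is to run the proof of Theorem~\ref{thm_bulk} one categorical level higher, with $\CPaB$ replaced by the $2$-colored operad $\PaPB$ and with the lax $2$-morphism $\CB\colon\PaPB^\bullet\to\Endp^\bullet$ of Theorem~\ref{thm_SC_action}. As in the bulk case, I would first reduce to the situation in which the countable index sets $I_c,I_o$ are finite, the general case then following termwise from (LC3), (bLC2) and (bbLC). After this reduction $F^c$ is an object of $D=\Vmodf\boxtimes\Vmodf$ and $F^o$ an object of $C=\Vmodf$, and the three defining operators become conformal blocks of the simplest trees: $\langle\bullet,\exp(L(-1)z_2+\Ld(-1)\z_2)Y^c(\bullet,\uz)\rangle$ gives a section of $\CB^c(12)$ by Proposition~\ref{prop_int}; the boundary operator $Y^o(\bullet,z)$ decomposes by (bLC2) into $V$-module intertwining operators $I\binom{N_k}{N_iN_j}$, hence by Proposition~\ref{prop_int} into sections of $\CB(U_{12})$; and, by Remark~\ref{rem_bb_symmetric}, the bulk-boundary operator $Y^b$ corresponds through \eqref{eq_def_tau} to an element of $\CB^o(\iota)$, i.e.\ an intertwining operator with the chiral module inserted at $iy\in\uH$ and the anti-chiral copy at $-iy$.

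For the \emph{Convergence} statement I would, for an arbitrary tree $E\in\Tr^o(r,s)$, write $Y_E$ as an iterated composition of these three basic conformal blocks along $\tilde{E}\in\Tr_{2r+s}$, exactly as in \eqref{eq_cor_bulk_boundary1}--\eqref{eq_cor_bulk_boundary2}, and then apply the glueing theorem (Theorem~\ref{thm_glue}) repeatedly. This shows that the resulting formal power series lies in $T_{\tilde{E}}^\conv$, hence converges absolutely and locally uniformly on $U_{\tilde{E}}\subset X_{2r+s}(\C)$ to a section of $\CB_{M_{[0;r,s]}}(U_{\tilde{E}})$. Restricting along $\Phi$ as in \eqref{eq_UEo} and fixing the branch $z^r=\exp(r\Log z)$ with cut along $\R_-$, so that $Y^b(a,z_{1\bar{1}})$ is expanded as in \eqref{eq_BB_branch}, produces the real analytic function $C_E$ on $U_E^o\subset X_{r,s}(\HH)$; the same computation with closed trees gives $C_A$ on $U_A^c$, which agrees with the bulk correlators of Theorem~\ref{thm_bulk} applied to $F^c$.

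For the \emph{Compatibility} statement it is enough to show that for every morphism $\mu\colon E\to E'$ in $\PaPB^o(r,s)$ the analytic continuation $A(\Phi_*\mu)$ carries $C_E$ to $C_{E'}$, and similarly in $\PaPB^c$; Theorem~\ref{thm_SC_action} guarantees that the glueing maps $\comp_p$ intertwine these continuations with the operadic composition of $\PaPB$, so an induction on trees reduces the claim to a list of operadic generators. Since $\PaPB$ is generated, as a $2$-colored operad together with its morphisms, by the associator $\alpha$ and braiding $\sigma$ of $\CPaB=\PaPB^c$ and by the two mixed moves realizing the bulk-boundary bootstrap equations \cite{Id}, the verification splits into three cases: (i) purely closed morphisms, covered by Theorem~\ref{thm_bulk} applied to $F^c$ (whose base cases are \eqref{eq_skew_closed} and \eqref{eq_assoc_closed}); (ii) purely open morphisms, covered by the consistency of $(F^o,Y^o,\va^o)$ established in Section~\ref{sec_boundary} (associativity of $Y^o$, cf.\ \eqref{eq_boundary_borcherds}); (iii) the mixed generators, where the required identities $A(\Phi_*\mu)(C_{\tau(1)\tau(2)})=C_{\tau(12)}$ and $A(\Phi_*\mu)(C_{\tau(1)2})=C_{2\tau(1)}$ are exactly (BB2) and (BB3). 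Since $Y^c$, $Y^o$ and $Y^b$ carry only integer powers of $z$ and $\z$ up to an overall real factor (Remark~\ref{rem_skew} and the $\R$-grading of $F^o$), these continuations are single-valued on $X_{r,s}(\HH)$, so the local functions $C_E$ glue to single real analytic functions $C_r^c$ on $X_r(\C)$ and $C_{r,s}^o$ on $X_{r,s}(\HH)$ with the restrictions demanded by \eqref{eq_cor_def_bb}; Associativity~I of \cite[(1.52)]{Ko1} (Remark~\ref{rem_kong}) then follows automatically as the composite of (BB2) and (BB3).

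The step I expect to be the main obstacle is the bookkeeping of branches along the paths $\Phi_*\mu$: one must check that along each such path no variable crosses the cut $\R_-$, so that the analytic continuation of the expansion attached to $E$ genuinely produces the expansion attached to $E'$ with the branch conventions of \eqref{eq_BB_branch} and of Figures~\ref{fig_BB3_12}--\ref{fig_BB2_b}. This is where the precise shape of the regions $U_E^o$, together with the fact that the open leaves of $\Tr^o$ are linearly ordered, must be used carefully. A secondary difficulty is to match Idrissi's presentation of $\PaPB$ \cite{Id} with the concrete list (BB1)--(BB3) — the $2$-colored analogue of the fact that $\alpha$ and $\sigma$ generate $\CPaB$ — so as to be sure that no further relations need to be imposed.
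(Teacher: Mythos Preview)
Your proposal is correct and follows essentially the same route as the paper's proof: reduce Compatibility to the generators of $\PaPB$ via the lax $2$-morphism of Theorem~\ref{thm_SC_action}, then verify these generators using the bulk axioms (FV3), the boundary associativity (bFV2), and the two bulk-boundary bootstrap equations (BB2), (BB3). The paper makes the generator list explicit by citing \cite[Theorem~4.2]{Id} for the five paths $\alpha_c,\sigma,\alpha_o,p,q$, which answers your secondary worry directly; your primary worry about branch bookkeeping is handled not in the proof itself but in the careful setup preceding Definition~\ref{def_bulk_boundary}, where the branches on $U_E^o$ are fixed once and for all via \eqref{eq_BB_branch}.
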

\begin{proof}
By Theorem \ref{thm_SC_action}, similarly to the proof of Theorem \ref{thm_bulk}, it suffices to show that (Compatibility) holds on the generator of $\PaPB$ as a 2-colored operad.

\begin{figure}[t]
  \begin{minipage}[c]{0.3\linewidth}
    \centering
    \includegraphics[width=1.5cm]{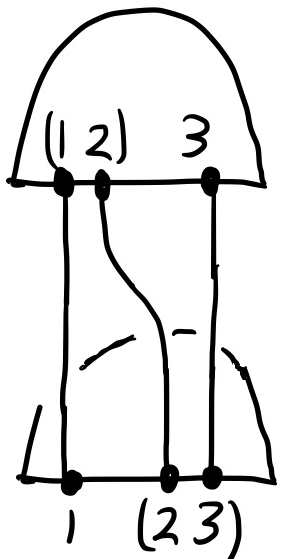}
    \caption{Path $\al_o$}\label{fig_al_o}
  \end{minipage}
  \begin{minipage}[c]{0.3\linewidth}
    \centering
    \includegraphics[width=1.5cm]{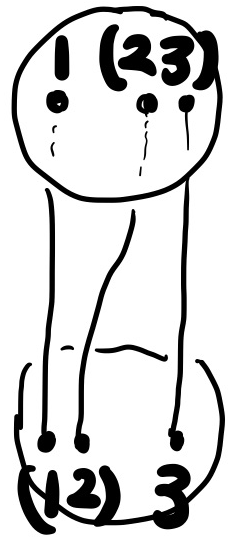}
    \caption{Path $\alpha_c$}\label{fig_al_c}
  \end{minipage}
      \begin{minipage}[c]{0.3\linewidth}
    \centering
    \includegraphics[width=1.3cm]{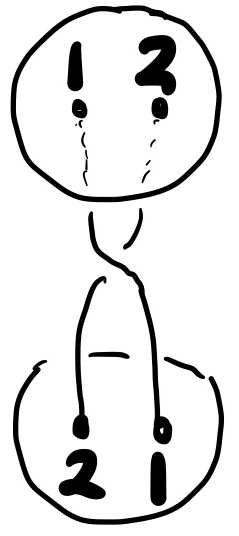}
    \caption{Path $\sigma$}\label{fig_sigma_l}
  \end{minipage}
    \begin{minipage}[c]{0.3\linewidth}
    \centering
    \includegraphics[width=1.5cm]{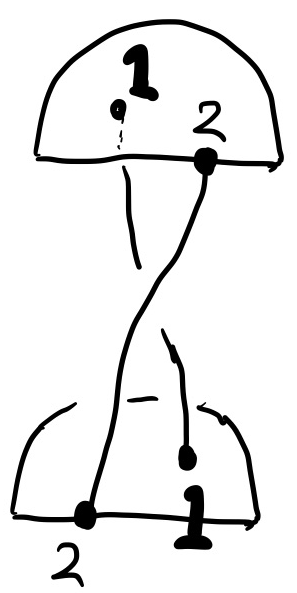}
    \caption{Path $p$}\label{fig_p}
  \end{minipage}
      \begin{minipage}[c]{0.3\linewidth}
    \centering
    \includegraphics[width=1.5cm]{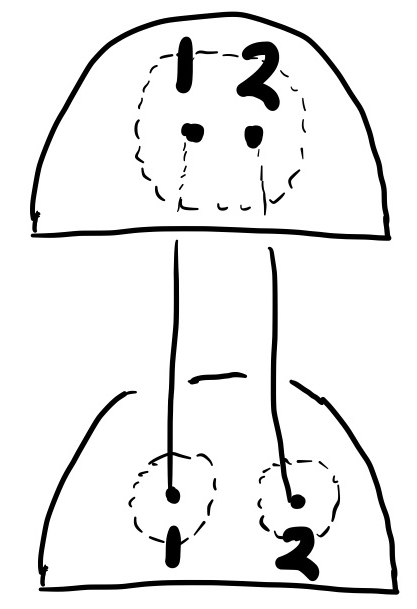}
    \caption{Path $q$}\label{fig_q}
  \end{minipage}
\end{figure}

In the proof of Theorem 4.2 in \cite{Id}, it was shown that $\PaPB$ is generated by the following five paths as a 2-colored operad:
\begin{align*}
\al_o&:(1 \cdot_o 2)\cdot_o 3 \rightarrow 1 \cdot_o (2\cdot_c3)\\
\al_c&:({\bf 1} \cdot_c {\bf 2})\cdot_c {\bf3} \rightarrow {\bf1} \cdot_c ({\bf2}\cdot_c {\bf3})\\
\si&: {\bf1}\cdot_c {\bf2} \rightarrow {\bf2}\cdot_c{\bf1}\\
p&: \tau({\bf1})\cdot_o 2 \rightarrow 2\cdot_o \tau({\bf1})\\
q&: \tau({\bf1}\cdot_c {\bf2}) \rightarrow \tau({\bf1}) \cdot_o \tau({\bf2}),
\end{align*}
which are given in Fig \ref{fig_al_o} - \ref{fig_q}.
By the bootstrap equations,
 the vertex operators $(Y^c,Y^o,Y^b)$ are invariant with respect to the analytic continuation along these paths.
Hence, by Theorem \ref{thm_SC_action}, the assertion holds.
\end{proof}

\appendix
\section{}
\label{sec_example_current}
Exactly marginal deformations of rational conformal field theories are generally not rational at all. However, in  Appendix, we will show that the current-current deformations of rational (bulk) conformal field theories always satisfy the local $C_1$-cofiniteness condition, and thus, are consistent.

We call a full VOA $F$ {\it strongly rational} if $\ker \Ld(-1)$ and $\ker L(-1)$ are simple self-dual rational $C_2$-cofinite VOAs.
Assume in this section that $F$ is a strongly rational full VOA.
Then, by \cite{DM}, the degree one subspaces of $\ker \Ld(-1)$ and $\ker L(-1)$ are reductive Lie algebras. 
Let 
\begin{align*}
H_l \subset \ker \Ld(-1) \text{ and } H_r \subset \ker L(-1)
\end{align*}
be Cartan subalgebras and set $n_l =\dim H_l$ and $n_r =\dim H_r$, the ranks of the Lie algebras.
We think $H_l$ (resp. $H_r$) inherits a bilinear form $(-,-)$ by $h(1,-1)h' = (h,h')\va$ (resp. $h(-1,1)h' = (h,h')\va$).

Then, in \cite{M1}, we construct a deformation family of a full vertex operator algebra parametrized by a quotient of the orthogonal Grassmannian:
\begin{align}
D_F \backslash \mathrm{O}(n_l,n_r;\R)/  \mathrm{O}(n_l;\R)\times  \mathrm{O}(n_r;\R),
\label{eq_double_coset}
\end{align}
where $\mathrm{O}(n_l,n_r;\R)$ is the real orthogonal group with signature $(n_l,n_r)$.
The subgroup $D_F \subset \mathrm{O}(n_L,n_R;\R)$ is defined in \cite{M1} as an automorphism group of a generalized full VOA, which corresponds to the T-dulaity of string theory in the case of Narain CFTs  \cite{Polc}.
In this section, we will briefly review this result and show that they are locally $C_1$-cofinite 
while at a general point in \eqref{eq_double_coset}, the full VOA is not a rational CFT, i.e., $\ker \Ld(-1)$ and $\ker L(-1)$ are not rational VOAs.

Recall that for a VOA $V$ and a subset $S \subset V$, set
\begin{align*}
\mathrm{Com}_V(S) = \{a\in V\mid s(n)a =0 \text{ for any }s \in S,n \geq 0 \},
\end{align*}
which is called {\it a commutant} vertex algebra.

Let $M_{n_l}(0)$ and $M_{n_r}(0)$ be the subVOAs of $\ker \Ld(-1)$ and $\ker L(-1)$ generated by the Cartan subalgebras.
Set
\begin{align}
W &= \mathrm{Com}_{\ker \Ld(-1)}(M_{n_l}(0)),\quad W'= \mathrm{Com}_{\ker \Ld(-1)}(W)\\
\overline{W} &= \mathrm{Com}_{\ker L(-1)}(M_{n_r}(0)),\quad \overline{W}'= \mathrm{Com}_{\ker L(-1)}(\overline{W}).
\end{align}
Then, combining theorems on the structure of strongly rational vertex operator algebras \cite[Theorem 1]{Mas} and results from representation theory \cite{CKLR,CKM22}, the following proposition is obtained in \cite{HM23}:
\begin{prop}[Proposition 4.3 in \cite{HM23} and Theorem 1 in \cite{Mas}]
\label{prop_structure_regular}
Suppose $F$ is strongly rational full VOA. Then, the following properties are hold:
\begin{enumerate}
\item
There are positive-definite even lattices $L_l$ of rank $n_l$ (resp. $L_r$ of rank $n_r$) such that $W' \cong V_{L_l}$ and $\overline{W}' \cong V_{L_r}$ as VOAs.
\item
$W$ and $\overline{W}$ are also strongly rational and $\ker \Ld(-1)$ and $\ker L(-1)$ are
simple-current extensions of the strongly rational VOAs $W \otimes V_{L_l}$ and $\overline{W} \otimes \overline{V_{L_r}}$:
\begin{align*}
\ker \Ld(-1) &=\bigoplus_{\al \in A_l} W^\al \otimes V_{\al +L_l} \\
\ker L(-1) &=\bigoplus_{\be \in A_r} \overline{W}^\be \otimes \overline{V_{\be +L_r}},
\end{align*}
for some subgroup $A_l \subset L_l^\vee/L_l$ and $A_r \subset L_r^\vee/L_r$.
\end{enumerate}
\end{prop}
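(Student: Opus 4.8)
The plan is to run the identical argument on the chiral half $U:=\ker\Ld(-1)$ and on the anti-chiral half $\ker L(-1)$, so I describe only the chiral case. By hypothesis $U$ is a simple, self-dual, rational, $C_2$-cofinite vertex operator algebra of CFT type (so $U_0=\C\va$). First I would invoke Dong--Mason to see that the weight-one space $U_1$, with bracket $a(0)b$ and invariant form $\langle a,b\rangle\va=a(1)b$, is a reductive Lie algebra on which the form is nondegenerate; fix a Cartan subalgebra $H_l\subset U_1$, $n_l=\dim H_l$. The first real step is to show $H_l$ generates a rank-$n_l$ Heisenberg sub-VOA $M_{n_l}(0)\subset U$, i.e. that $\langle-,-\rangle|_{H_l}$ is nondegenerate: a degenerate direction would produce a commutative element $h$ with $h(1)h=0$, and the Heisenberg/affine argument associated to $h$ then contradicts simplicity and self-duality of $U$. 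Since $h(0)$ for $h\in H_l$ acts semisimply with rational eigenvalues, one gets a joint eigenspace decomposition $U=\bigoplus_{\la\in H_l^*}U^{(\la)}$.

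The crux is the next step: setting $L_l=\{\la\in H_l^*:U^{(\la)}\neq 0\}$, I would show $L_l$ is a positive-definite even lattice of rank $n_l$. It is a group by the vertex-algebra product; it has full rank because $\langle-,-\rangle$ is nondegenerate on $H_l$ and $C_2$-cofiniteness forces enough weight vectors to occur; it is even and integral because on $U^{(\la)}$ one has $L(0)-L^{H}(0)\in\Z_{\geq 0}$ for the rank-$n_l$ Heisenberg Virasoro $L^H$, so $\tfrac12\langle\la,\la\rangle$ is bounded above by a conformal weight and the integrality of conformal-weight differences pins down $\langle\la,\mu\rangle\in\Z$; and it is positive definite because the $L(0)$-spectrum of $U$ is bounded below. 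This is precisely the lattice-embedding content of Mason's structure theorem and of \cite[Proposition 4.3]{HM23}, so I would cite \cite{Mas,HM23}, listing the ingredients only to make the logical skeleton explicit.

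With the lattice in hand, $M_{n_l}(0)\subseteq W'=\Com_U(\Com_U(M_{n_l}(0)))$, and the Dong--Mason double-commutant theorem for Heisenberg subalgebras together with the lattice structure shows $W'$ is generated over $M_{n_l}(0)$ by vectors of conformal weight $\tfrac12\langle\la,\la\rangle$, $\la\in L_l$, whence $W'\cong V_{L_l}$ as VOAs; this proves (1). The conformal vector of $V_{L_l}$ equals the Heisenberg one, so $\om-\om_{V_{L_l}}$ is a conformal vector for $W=\Com_U(M_{n_l}(0))$, and $(W,V_{L_l})$ is a commuting pair with $\om=\om_W+\om_{V_{L_l}}$ inside the strongly rational $U$; the commutant theorem for such pairs (used in \cite{Mas,HM23,CKM22}, via \cite{CKLR}) then gives that $W$ is strongly rational. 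Finally, rationality and $C_2$-cofiniteness yield a finite decomposition of $U$ into irreducible $W\otimes V_{L_l}$-modules; since the irreducible $V_{L_l}$-modules are the $V_{\al+L_l}$ with $\al\in L_l^\vee/L_l$, each summand is $W^\al\otimes V_{\al+L_l}$, and simplicity and self-duality of $U$ force this to be a simple-current extension whose index set $A_l\subset L_l^\vee/L_l$ is a subgroup (necessarily isotropic for the discriminant form). Running the same argument on $\ker L(-1)$ produces $\overline{W}$, $L_r$, $A_r$ and completes (2).

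The main obstacle is the middle step and the strong-rationality claim for the commutant: showing the $H_l$-weights form a positive-definite even lattice uses self-duality and $C_2$-cofiniteness in an essential way and is the substance of \cite[Theorem 1]{Mas}, while strong rationality of $W$ is the commutant-of-a-rational-subalgebra theorem; everything else is bookkeeping with coset constructions and simple-current extensions. Since these are exactly the results cited in the statement, the proof ultimately reduces to assembling \cite[Theorem 1]{Mas} and \cite[Proposition 4.3]{HM23} and transcribing the chiral argument to the anti-chiral side.
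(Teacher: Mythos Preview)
The paper gives no proof of this proposition: it is stated purely as a citation of \cite[Theorem~1]{Mas} and \cite[Proposition~4.3]{HM23}, with no argument supplied. Your proposal is therefore not competing with a proof in the paper but rather supplying the outline that the paper omits; the sketch you give is an accurate reconstruction of the logic in those references (Mason's lattice-embedding theorem for the Heisenberg double commutant, together with the commutant-rationality results of \cite{CKLR,CKM22} invoked in \cite{HM23}), and you correctly identify at the end that the content reduces to assembling those cited theorems.
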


For $\ga \in H_l \oplus H_r$, set
\begin{align*}
\Om_F^\ga =\{v \in F\mid &h_l(n,-1)v =0, h_r(-1,n)v =0, \\
&h_l(0,-1)v =(h_l,\ga)v, h_r(-1,0)v =(h_r,\ga)v \text{ for any }h_l \in H_l,h_r\in H_r, n \geq 1\},
\end{align*}
the lowest weight space of the affine Heisenberg Lie algebra.
Then, by \cite[Theorem 5.3]{M1}, $\Om_F=\bigoplus_{\ga \in H_l \oplus H_r}\Om^\ga$ inherits a structure of a generalized full VOA,
Let $D_F \subset \mathrm{O}(H_l\oplus -H_r)\cong \mathrm{O}(n_l,n_r;\R)$ denote the automorphism group of the generalized full VOA $\Om$,
where $\mathrm{O}(n_l,n_r;\R)$.
Then, we construct a family of full VOAs that continuously deforms the eigenvalues $\ga$ (called charges)
parametrized by \eqref{eq_double_coset}  (for more precise statement, see \cite[Section 6.2]{M1}).
This family is called a {\it current-current deformation} of conformal field theory in physics.
\begin{thm}
Let $F$ be a strongly rational full VOA. Then, at any point in the current-current deformations of $F$ parametrized by
\begin{align*}
D_F \backslash \mathrm{O}(n_l,n_r;\R)/  \mathrm{O}(n_l;\R)\times  \mathrm{O}(n_r;\R),
\end{align*}
the full VOA is locally $C_1$-cofinite. In particular, it is consistent.
\end{thm}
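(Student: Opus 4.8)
The plan is to verify, at every point $g$ of the double coset \eqref{eq_double_coset}, the chiral/anti-chiral local $C_1$-cofiniteness hypotheses of Theorem \ref{thm_bulk2} for the deformed full vertex operator algebra $F^{(g)}$, taking as holomorphic and anti-holomorphic chiral symmetries not the full lattice vertex algebras but the ``Heisenberg $\times$ commutant'' subalgebras
\begin{align*}
V_l = W \otimes M_{n_l}(0), \qquad V_r = \overline{W}\otimes \overline{M_{n_r}(0)},
\end{align*}
which are rigid under the deformation. Here $W,\overline{W}$ are the strongly rational commutant vertex algebras of Proposition \ref{prop_structure_regular} and $M_{n_l}(0), \overline{M_{n_r}(0)}$ the rank-$n_l$, rank-$n_r$ Heisenberg subalgebras; each of $V_l,V_r$ is a positive graded vertex operator algebra (a Heisenberg vertex operator algebra has one-dimensional degree-zero space, and tensoring with a strongly rational $W$ preserves this). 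The current--current deformation of \cite{M1} is built from zero modes of the Heisenberg currents, so it acts trivially on the charge-zero sector; in particular it keeps $V_l$ (resp.\ $V_r$) as a sub-vertex-algebra of $\ker \Ld(-1)^{(g)}$ (resp.\ $\ker L(-1)^{(g)}$) for every $g$, which is condition (1) of Theorem \ref{thm_bulk2}.

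For condition (2), I would first restrict the undeformed decomposition to $V_l\otimes V_r$: since $\ker \Ld(-1)$ is strongly rational, any of its modules restricts to $V_l$ as a direct sum of modules of the form $W^\nu \otimes \pi_\la$ with $W^\nu$ an irreducible $W$-module and $\pi_\la$ a Heisenberg Fock module, and combining this with (LC2)--(LC3) for $F$ and the finiteness (FO3) of the graded pieces gives, over $V_l\otimes V_r$,
\begin{align*}
F \cong \bigoplus_{i \in I}\bigl(W^{\nu_i}\otimes \pi_{\la_i}\bigr)\otimes\bigl(\overline{W}^{\bar{\nu}_i}\otimes \overline{\pi}_{\bar{\la}_i}\bigr),
\end{align*}
with $I$ countable, finite multiplicities, and the charges $(\la_i,\bar{\la}_i)$ running over a lattice in $H_l\oplus H_r$. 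By the construction in \cite{M1}, passing to $F^{(g)}$ only rotates the Heisenberg charges, $(\la_i,\bar{\la}_i)\mapsto g\cdot(\la_i,\bar{\la}_i)$, leaving the $W\otimes\overline{W}$-module types $(\nu_i,\bar{\nu}_i)$ unchanged, so $F^{(g)}$ admits a decomposition of the same shape. Each summand $M_i=W^{\nu_i}\otimes\pi_{\la_i}$ is $C_1$-cofinite, being the tensor product of the $C_1$-cofinite modules $W^{\nu_i}$ ($W$ is $C_2$-cofinite) and $\pi_{\la_i}$ (a Heisenberg Fock module is $C_1$-cofinite), it lies in $\Vmodf$ (its contragredient $(W^{\nu_i})^\vee\otimes\pi_{-\la_i}$ is finitely generated since $W$ is rational and $\pi_{-\la_i}$ is cyclic), and it satisfies the standing module assumptions of Section \ref{sec_VOA}; the same holds for $\overline{M}_i$. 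For condition (3), charge conservation forces $\la_k=\la_i+\la_j$ for any $M_k$ occurring in the operator product of $M_i$ and $M_j$, while the $W$-factor $W^{\nu_k}$ must be a constituent of the finite fusion product of $W^{\nu_i}$ and $W^{\nu_j}$ ($W$ rational); hence $I(i,j)$ is finite and the holomorphic (resp.\ anti-holomorphic) part of the deformed full vertex operator decomposes as $\bigoplus_{k\in I(i,j)} I\binom{M_k}{M_iM_j}\otimes I\binom{\overline{M}_k}{\overline{M}_i\overline{M}_j}$.

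Theorem \ref{thm_bulk2} then gives that $F^{(g)}$ is consistent for every $g$, which is the assertion (and when $n_l=n_r$ and $W\cong\overline{W}$, so that $V_l\cong V_r$ serves as a single chiral symmetry, this is local $C_1$-cofiniteness in the sense of Definition \ref{dfn_LC_bulk}). The main obstacle is the rigidity underlying steps (1) and (2): one must go into the explicit deformation of \cite{M1} and confirm that it fixes the subalgebras $W\otimes M_{n_l}(0)$ and $\overline{W}\otimes \overline{M_{n_r}(0)}$ and acts on the remaining charge sectors purely through the rotation $g$ of the Heisenberg charges, leaving all $W\otimes\overline{W}$-module types intact. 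Given this, the remaining ingredients --- $C_1$-cofiniteness and cyclicity of Heisenberg Fock modules, finiteness of fusion for the rational $W,\overline{W}$, and finiteness of multiplicities from (FO3) --- are routine.
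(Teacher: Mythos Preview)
Your proposal is correct and follows essentially the same approach as the paper: both take $V_l=W\otimes M_{n_l}(0)$ and $V_r=\overline{W}\otimes\overline{M_{n_r}(0)}$ as the chiral/anti-chiral symmetries, observe that the deformation fixes these subalgebras and only rotates the Heisenberg charges, and then verify (LC1)--(LC3) using $C_1$-cofiniteness of Fock modules and rationality of $W,\overline{W}$. Your write-up is in fact more detailed than the paper's proof, which records the same idea in a few lines; your caveat about Definition~\ref{dfn_LC_bulk} versus Theorem~\ref{thm_bulk2} is also well taken.
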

\begin{proof}
Let $\tau \in D_F \backslash \mathrm{O}(n_l,n_r;\R)/  \mathrm{O}(n_l;\R)\times  \mathrm{O}(n_r;\R)$ and $F_\tau$ be the corresponding full VOA. In general, $\ker \Ld(-1)|_{F_\tau}$ and $\ker L(-1)|_{F_\tau}$ are no longer strongly rational VOA, however, they always contain subVOAs
\begin{align*}
V&= M_{n_l}(0) \otimes W \subset \ker \Ld(-1)|_{F_\tau}\\
\overline{V}&= \overline{M_{n_r}(0)} \otimes \overline{W} \subset \ker L(-1)|_{F_\tau}.
\end{align*}
By construction, $F_\tau$ is the direct sum of the $C_1$-cofinite modules of $V \otimes \overline{V}$ since $W$ and $\overline{W}$ are strongly rational by Proposition \ref{prop_structure_regular}.
Since the deformation modifies the intertwining operators of Heisenberg VOAs, (LC3) holds.
\end{proof}

\noindent
\begin{center}
{\bf Acknowledgements}
\end{center}

The author would like to thank Yoh Tanimoto, Maria Stella Adamo, Masahito Yamazaki, Yuji Tachikawa, Tomoyuki Arakawa, Mayuko Yamashita, Tomohiro Asano, Thomas Creutzig, Shuhei Ohyama and Kan Kitamura for valuable discussions.
This work is supported by Grant-in Aid for Early-Career Scientists (24K16911) and FY2023 Incentive Research Projects (Riken).

\end{document}